\documentclass[a4paper,11pt]{amsart}
\usepackage{amsfonts}
\usepackage{amssymb}
\usepackage[utf8]{inputenc}
\usepackage{amsmath}
\usepackage{pdflscape}
\usepackage{float}
\usepackage{easyReview}
\usepackage{comment}
\usepackage{graphicx}
\usepackage[colorlinks=true, linkcolor=red, citecolor=blue]{hyperref}
\usepackage[]{epsfig}
\usepackage[]{pstricks}
\usepackage{tikz}
\newcommand{\nwc}{\newcommand}
\nwc\eps{\varepsilon}
\newcommand\nd{\noindent}

\newcommand\I{{\mathcal{I}}}

\newcommand\R{\mathbb{R}}

\nwc\EN{{\mathcal{E}_1}}

\newcommand\ep{\epsilon}
\newcommand\la{\lambda}

\nwc{\dx}{\partial_x}
\nwc{\dy}{\partial_y}

\newcommand\M{{\mathcal{M}}}

\nwc{\hamone}{{\mathcal{H}}}
\nwc{\Imu}{\Sigma}
\nwc{\Iem}{\I}
\nwc{\Gep}{G}
\nwc{\wt}{\widetilde}
\nwc{\IF}{\mathcal F}

 \newtheorem{thm}{Theorem}[section]
 
 \newtheorem{lem}[thm]{Lemma}
 
 \theoremstyle{definition}
 \newtheorem{defn}[thm]{Definition}
 \theoremstyle{remark}
 \newtheorem{rem}[thm]{Remark}
 
 \numberwithin{equation}{section}

\numberwithin{equation}{section}
\makeatletter
\@namedef{subjclassname@2010}{\textup{2020} Mathematics Subject Classification}
\makeatother

\begin{document}

\title[stability of solitary waves]
{Solitary waves for a higher order Boussinesq system: Stability and numerical experiments}

\author[Capistrano-Filho]{Roberto de A. Capistrano--Filho*}

\address{Roberto de A. Capistrano--Filho \\
Department of Mathematics, Universidade Federal de Pernambuco\\
Av. Prof. Moraes Rego, 1235 - Cidade Universitária, Recife - PE, 50670-901, Brasil}

\email{\url{roberto.capistranofilho@ufpe.br}}

\author[Mu\~noz]{Juan Carlos Mu\~noz}


\author[Quintero]{Jos\'e Raul Quintero}

\address{Juan Carlos Mu\~noz and Jos\'e Raul Quintero \\
Department of Mathematics, Universidad del Valle\\
Calle 13, 100-00\\
Cali - Colombia}

\email{\url{juan.munoz@correounivalle.edu.co}}
\email{\url{jose.quintero@correounivalle.edu.co}}


\subjclass[2010]{76B15, 35A15, 37K40, 65M70, 65M06}

\keywords{Solitary waves, variational approach, orbital stability, higher-order Boussinesq systems}

\date{\today}
\thanks{*Corresponding author: \url{roberto.capistranofilho@ufpe.br}}

\begin{abstract}
In this work, we study the nonlinear orbital stability of solitary-wave solutions for a class of higher-order Boussinesq systems with Hamiltonian structure. Using variational methods and the asymptotic connection with generalized fifth-order KdV equations, we establish orbital stability results for a broad family of homogeneous and nonhomogeneous nonlinearities satisfying suitable scaling assumptions. We also perform numerical simulations to investigate the stability criterion associated with the solitary waves. The numerical results suggest that the range of wave velocities leading to orbital stability may be larger than that predicted by the theoretical analysis.
\end{abstract}

\maketitle

\section{Introduction}
In the present work,  we study the nonlinear orbital stability of solitary-wave solutions (travelling waves of finite energy) of the one-dimensional higher-order Boussinesq evolution system
\begin{equation} \left\{
\begin{array}{rl}
\left(I- d\partial^2_x+d_2\partial_x^4\right) u_t  + \eta_x
+c\partial_x^3\eta+c_2\partial_x^5 \eta
&= \partial_x\left(G_1(\eta, \eta_x, \eta_{xx}, u, u_{x}, u_{xx}) \right),
\\ \label{1bbl} \\
\left(I-b\partial_x^2+b_2\partial_x^4\right) \eta_t
+\partial_x u+a\partial_x^3 u +a_2\partial_x^5 u
&=\partial_x\left(G_2(\eta, \eta_x, \eta_{xx}, u, u_{x}, u_{xx}) \right),
\end{array}\right.
\end{equation}
where $\eta=\eta(x,t)$ and $u=u(x,t)$ are real functions,  and the nonlinearity $G=(G_1, G_2)^T$ has the variational structure
\begin{equation*}
\begin{split}
G_1(q, r, z, s, t, w)=& F_q(q, r, s, t)- rF_{qr}(q, r, s, t)- zF_{rr}(q, r, s, t)- tF_{sr}(q, r, s, t)\\&  - wF_{tr}(q, r, s, t),
\end{split}
\end{equation*}
\begin{equation*}
\begin{split}
G_2(q, r, z, s, t, w)=& F_s(q, r, s, t)- rF_{qt}(q, r, s, t)- zF_{rt}(q, r, s, t)- tF_{st}(q, r, s, t)\\&- wF_{tt}(q, r, s, t),
\end{split}
\end{equation*}
where $F$  is a function with some properties (like homogeneity). 

We want to point out that some classical Boussinesq systems were derived by Bona, Chen, and Saut for the first- and second-order approximations to the full two-dimensional Euler equations, to describe the motion of short waves of small amplitude on the surface of an ideal fluid under gravity force (see \cite{BCS02} and \cite{BCS04}).  In the first approximation, the authors obtained a four-parameter family of Boussinesq systems from the two-dimensional Euler equations, known as the $abcd$ Boussinesq system, 
\begin{equation*}
\begin{cases}
\begin{array}{rl}
\left(I- b\partial^2_x\right) \partial_t\eta  + \partial_xw
+a\partial_x^3w
&= -\partial_x\left(\eta w \right),
\\ \label{abcd}
\left(I-d\partial_x^2\right) \partial_tw
+\partial_x \eta+c\partial_x^3 \eta
&=\frac12\partial_x\left(w^2 \right),
\end{array}
\end{cases}
\end{equation*}
with $a+b+c+d=\frac13-\sigma$ ($\sigma\geq 0$ is the surface tension), 
and for the second approximation, an eight-parameter family of Boussinesq systems
$$
\begin{cases}
\begin{array}{rl}
\left(I- d\partial^2_x+d_2\partial_x^4\right) \partial_tw  + \partial_x\eta
+c\partial_x^3\eta+c_2\partial_x^5 \eta
= \partial_x\left(H_1(\eta, \partial_x\eta, \partial^2_x\eta, w, \partial_xw, \partial^2_xw) \right),
\\ \label{8-BS} \\
\left(I-b\partial_x^2+b_2\partial_x^4\right) \partial_t\eta
+\partial_x u+a\partial_x^3 u +a_2\partial_x^5 u
=\partial_x\left(H_2(\eta, \partial_x\eta, \partial^2_x\eta, w, \partial_xw,\partial^2_xw) \right),
\end{array}
\end{cases}
$$
where $H_i$ for $i=1,2$ are given by
\begin{align*}
H_1(\eta,\partial_x \eta,\partial^2_x \eta, w,\partial_x w,\partial^2_xw) &=  \frac12 w^2 -c\partial_x(w\partial_xw)-\eta \partial^2_x\eta-\frac12 \partial_x(w^2) +(c+d)w \partial^2_xw,
\end{align*}
and
\begin{align*}
H_2(\eta, \partial_x\eta,\partial^2_x \eta, w,\partial_x w,\partial^2_xw) &=  -\eta w+b\partial^2_x(\eta w)-\left(a+b-\frac13\right)\eta \partial^2_xw,
\end{align*}
with the constants $a$,  $a_2$, $b$, $b_2$, $c_2$, $d$, and $d_2$ satisfying the following condition
\[
a+b+c+d=\frac13-\sigma,
\]
and also that 
 \[
a_2 -b_2=-\frac12 (\theta^2-\frac13)b+\frac5{24}(\theta^2-\frac15)^2,  \ \
c_2 -d_2=-\frac12 (1-\theta^2)c+\frac5{24}(1-\theta^2)(\theta^2-\frac15),
\]
for $\theta\in [0, 1]$. In the second approximation case for $\sigma=\frac13$, the Boussinesq system still captures the dispersive nature of the water wave problem, which does not hold for the $abcd$-Boussinesq system corresponding to the first approximation. This fact will be crucial to achieve the stability result in our work.

\subsection{Background} It is important to highlight that, as happens for the $abcd$-Boussinesq system and the KdV equation at the traveling wave level (see, for example, 
\cite{CQS-2024}), the higher-order Boussinesq system \eqref{1bbl} at the traveling wave level, depending on the nonlinearity, is related to traveling wave solutions of the fifth-order KdV equation:
\begin{equation}\label{5kw}
\partial_tu + \alpha\partial^3_x u + \beta \partial^5_xu = \partial_x(f(u, \partial_xu, \partial^2_xu)),
\end{equation}
where the nonlinearity takes the variational form
$$f(q, r, s) = F_q(q, r) - rF{qr}(q, r) - sF_{rr}(q, r),$$
for some $C^2$ function $F$, which is not necessarily homogeneous, as discussed by Esfahani and Levandosky in \cite{Esfahani-Levandosky-2021}. 

The fifth-order KdV equation \eqref{5kw} arises in several physical contexts. For the choice $F(u,u_x)=-u^3$, the equation models the evolution of gravity–capillary waves in shallow water, as studied by J. Hunter and J. Scheurle \cite{Hunter-Scheurle-1988} and J. Zufiria \cite{Zufiria-1987}. It also appears in the description of chains of coupled nonlinear oscillators and magneto-acoustic wave propagation in plasmas \cite{Kawahara-1972}.

On the other hand,  P. Olver \cite{Olver-1984} derived equation \eqref{5kw} as a second-order approximation for unidirectional wave propagation in the setting of irrotational motion of an inviscid and incompressible fluid under gravity, describing the interaction of small-amplitude long waves over a shallow horizontal bottom (see also \cite{Craig-Groves-1994}). In this framework, several nonlinear structures arise naturally, including terms of the form $F(u,u_x)=uu_x^2-u^3,$
as well as related nonlinearities such as $G(u,u_x)=-uu_x^2$, $G(u,u_x)=-u^3$, and $G(u,u_x)=u^4+uu_x^2,$ which are associated with perturbations of the Hamiltonian structure of the full water-wave problem. Related models and asymptotic derivations were also investigated by D. Benney \cite{Benney-1977}, W. Craig and M. Groves \cite{Craig-Groves-1994}, and several other authors. For further developments on higher-order water-wave models, we refer the reader to \cite{Benney-1977,Craig-Groves-1994,Hunter-Scheurle-1988,Kawahara-1972,Kichenassamy-Olver-1991,Olver-1984,Ponce-1993,BCS02,BCS04,CNS2011,BCL2005}.

As is well known, the approach developed by Grillakis {\it et al.} in \cite{GSS,GSS1} for the analysis of orbital stability and instability relies on the existence of a Hamiltonian structure, through which travelling waves are characterized as critical points of an associated action functional. For the higher-order Boussinesq system \eqref{1bbl}, the Hamiltonian functional is given formally by
\begin{equation*}\label{ham}
\mathcal{H}\begin{pmatrix}\eta \\ \Phi
\end{pmatrix}
=\frac{1}{2}\int_{\mathbb{R}}
\left(
\eta^2- c (\eta_x)^2+ c_2(\eta_{xx})^2
+u^{2} - a (u_x)^2+a_2(u_{xx})^2
+F(\eta, \eta_x, u,  u_x)
\right)\,dx.
\end{equation*}
Moreover, the system can be written in the Hamiltonian form
\[
\begin{pmatrix}\eta_t  \\
\Phi_t
\end{pmatrix}
=
-\mathcal{J}_{bdb_2d_2}\mathcal{H}'
\begin{pmatrix}\eta \\ \Phi \end{pmatrix},
\]
where
\[
\mathcal{J}_{bdb_2d_2}
=
\partial_x
\begin{pmatrix}
0 &\left(I- b \partial_x^2+b_2\partial_x^4\right)^{-1}\\
\left(I- d\partial_x^2+d_2\partial_x^4\right)^{-1}&0
\end{pmatrix}.
\]
In the particular case \(b=d\) and \(b_2=d_2\), the system possesses a genuine Hamiltonian structure, since the operator \(\partial_x M^{-1}\), with
\[
M^{-1}=\mathcal{J}_{bbb_2b_2},
\]
is skew-symmetric. Furthermore, if the initial data \(\eta(0)\) and \(u(0)\) have zero mean value on \(\mathbb{R}\), then this property is preserved by the flow for as long as the solution exists. In this case, the system also admits the conserved quantity
\(\mathcal Q\), usually referred to as the {\it charge} in the sense of Noether's theorem. Formally, it is defined by
\[
\mathcal{Q}\begin{pmatrix}\eta \\ u \end{pmatrix}
=
-\frac12
\left<
\mathcal{J}_{bb_2}^{-1}
\begin{pmatrix}\eta \\ u\end{pmatrix},
\begin{pmatrix}\eta \\ u \end{pmatrix}
\right>
=
-\int_{\mathbb{R}}
(I-b\partial_x^2+b_2\partial_x^4)\eta u \,dx.
\]
We emphasize that both functionals \(\mathcal H\) and \(\mathcal Q\) are well defined for
\[
(\eta,u)\in H^2(\mathbb R)\times H^2(\mathbb R),
\]
which therefore constitutes the natural energy space for the well-posedness theory and for the construction of travelling-wave solutions.

Under this Hamiltonian framework, travelling waves with speed \(\omega\) correspond to stationary solutions of the modulated system
\[
\begin{pmatrix}\eta_t  \\
\Phi_t
\end{pmatrix}
=
\mathcal{J}_{bb_2}J_\omega'
\begin{pmatrix}\eta \\ \Phi \end{pmatrix},
\]
where the action functional is given by
\[
J_\omega(Y)=\mathcal H(Y)+\omega\mathcal Q(Y).
\]
Equivalently, travelling-wave solutions are characterized as critical points of \(J_\omega\), that is, solutions of
\[
\mathcal H'(Y)+\omega\mathcal Q'(Y)=0.
\]
We recall that the general theory developed in \cite{GSS,GSS1} provides a powerful framework for proving orbital stability of solitary waves in abstract Hamiltonian systems. In this setting, least-energy solitary waves \(Y_\omega\) are characterized as minimizers of the action functional \(J_\omega\). The corresponding stability analysis relies essentially on the positivity properties of the linearized operator
\[
J_\omega''(Y_\omega),
\]
up to at most two distinguished directions, together with the strict convexity of the scalar function,
\[
S_1(\omega)
=
\inf\left\{
J_\omega(Y): Y\in \mathcal M_\omega
\right\},
\]
where \(\mathcal M_\omega\) denotes an appropriate constraint manifold.

For the higher-order Boussinesq system considered here, the spectral characterization of $J_{\omega}''(Y_{\omega})$ within the Grillakis--Shatah--Strauss framework is highly challenging due to the complexity of the associated differential operator and the failure of the range of $\partial_x$ to satisfy the abstract assumptions of the theory (see \cite{BSS,CQS-2024,DBJCb,JQAM2013c,JQ2002a,Q1,JQ2005,JQ2010b,Shatah}). Therefore, we adopt a variational approach based on the minimization properties of $S_1$, following ideas developed in \cite{Shatah,JQ2005,JQ2010b,DBJCb,CQS-2024}. 


From the numerical perspective, the stability and instability of solitary waves in dispersive equations have been extensively investigated in \cite{Bona_numerical,Dougalis,Frutos,BonaDougalis}. These studies combine finite-difference, spectral, and high-order time discretization methods to analyze stability thresholds, nonlinear interactions, and long-time dynamics, providing valuable support to the analytical theory.

\subsection{Main results and heuristic}

In this paper, we study the orbital stability of solitary waves for the higher-order Boussinesq system \eqref{1bbl} in the regime $0<\omega<1$. A key ingredient is the KdV scaling limit as $\omega\to1^{-}$, which links the Boussinesq system to a generalized fifth-order KdV equation and allows us to establish the positivity of $S''(\omega)$ near the limiting velocity. Combined with variational methods, concentration--compactness arguments, and the Grillakis--Shatah--Strauss theory, this yields orbital stability for a broad class of nonlinearities. We also develop a Fourier-based iterative scheme to compute solitary-wave profiles and the stability functional. The numerical results agree with the theory and suggest that the stability region may extend beyond the range covered by the analytical results.

Before we go further,  by a solitary-wave solution, we mean a solution  $(\eta, u)\in H^2(\R)\times H^2(\R)$ for the Boussinesq system (\ref{1bbl}) of the form $\eta(t,x)=  \psi\left({x-\omega
t}\right)$ and $u(x, t)=
v\left({x-\omega t}\right)$. In this case,  the travelling wave profile $(\psi, v)$ must satisfy the system
\begin{equation}
\left\{
\begin{array}{rl}
-\omega\left(v-dv''+d_2 v^{(iv)}\right)+ \psi +c\psi''+c_2 \psi^{(iv)}
-G_1(\psi, \psi_x, \psi_{xx}, v,  v_x, v_{xx})&=0,
\\ \label{trav-eqs}\\
-\omega\left(\psi-b\psi''+b_2 \psi^{(iv}\right) + v+a v'' +a_2 v^{(iv)}
-G_2(\psi, \psi_x, \psi_{xx}, v,  v_x, v_{xx})& =0.
\end{array}\right.
\end{equation}

Owing to the Hamiltonian structure of the system, when
\[
b=d,\qquad b_2=d_2,\qquad a,c<0,\qquad a_2,c_2>0,
\]
travelling-wave solutions correspond to critical points of the action functional $J_\omega$. In particular, ground-state solitary waves are characterized as minimizers of $J_\omega$ under an appropriate constraint. In this case, the existence of solitary-wave solutions for  this higher-order Boussinesq system  (\ref{1bbl}) follows from the variational approach that applies a minimax-type result, since solutions $(\psi,v)$ of the system (\ref{trav-eqs}) are critical points of the functional $J_{\omega}$ given by
$$
J_{\omega}(\psi, v)=\frac12 I_{\omega}(\psi, v)- K(\psi, v),
$$
where the functionals $I_\omega$ and $K$ are defined on the space $
X= H^2(\R)\times H^2(\R)$ by
$$
I_{\omega}(\psi,v)=I_1(\psi,v)+I_{2,{\omega}}(\psi,v),
$$
and 
$$
K(\psi, v)  =  \int_{\mathbb R}F(\psi, \psi_x, v,  v_x)\,dx, 
$$
with
\begin{eqnarray*}
I_1(\psi, v)
& = & \int_{\mathbb R}\left[\psi^2 -c(\psi')^{2} + c_2(\psi'')^{2}+
v^2-a( v')^2+a_2( v'')^2\right]dx,\\
I_{2}(\psi, v)
& = & -2\int_{\mathbb R}\left(\psi-b\psi''+b_2 \psi^{(iv)} \right)v\,dx= -2\int_{\mathbb R}\left(\psi v+b\psi' v'+b_2 v''\psi'' \right)\,dx,\end{eqnarray*}
where $I_{2, \omega}=\omega I_{2}$.   Before we go further,  we observe that
\[
K'(\psi, v)= (G_1(\psi, \psi_x, \psi_{xx}, v,  v_x, v_{xx}),
G_2(\psi, \psi_x, \psi_{xx}, v,  v_x, v_{xx}))^t.
\]
Now,  we set the operators
\begin{align*}
P_{\omega}(\psi, v)& =\left<J'_{\omega}(\psi, v), (\psi, v)\right> = I_\omega(\psi, v)-N(\psi, v), \\
N(\psi, v)& =\left<K'(\psi, v), (\psi, v)\right>,
\end{align*}
and  consider the manifold $\mathcal M_{\omega}\subset H^2(\R)\times H^2(\R)$
\[
\mathcal M_{\omega}=\{(\psi, v)\in H^2(\R)\times H^2(\R): \ P_{\omega}(\psi, v)=0 \},
\]
and define the number
$$
S(\omega)=\inf\{J_\omega(\psi, v): (\psi, v)\in \M_{\omega}\}.
$$
With all this notation and definitions in hand, we state the main result:
\begin{thm}\label{main} Assume that $a+c+2b=0$.
\begin{itemize}
\item[a)] If $0<p<8$ and  $0<p<q$, there exist $\omega$ arbitrarily close to $1^{-}$ such that $\mathcal G_{\omega}$ is stable. 
\item[b)] Suppose that $F(q,r,s, t)$ is a homogeneous function of order $p+2$. If $ 0<p <8$. Then,  there is $\omega*\in (0, 1)$ such that  $\mathcal G_{\omega}$ is stable for $\omega\in (\omega^*, 1)$.
\item[c)]  Suppose that $F(q,r,s, t)$ is a homogeneous function of order $p+2$ and $F(\cdot,r, \cdot, t)$ is a homogeneous function of order $\beta$. If $p+2\beta<8$, then there exists $\omega_0>0$ such that $\mathcal G_{\omega}$ is stable for $\omega\in (\omega_0, 1)$.
\end{itemize}
\nd Here  the set of the ground state solutions $\mathcal G_{\omega}$ is given by
\[
\mathcal G_{\omega}
=
\left\{
(\psi,v)\in X\setminus\{0\}:
J_{\omega}(\psi,v)=S(\omega),
\
P_{\omega}(\psi,v)=0
\right\}.
\]
\end{thm}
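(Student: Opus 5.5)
The plan is to follow the variational route of \cite{Shatah,JQ2005,CQS-2024}. First, for each $0<\omega<1$, I will show that $S(\omega)>0$ is attained and that $\mathcal G_\omega\neq\emptyset$. The hypotheses $a,c<0$, $a_2,c_2>0$ together with $|\omega|<1$ make $I_\omega$ equivalent to the $X$-norm: the symbol of the quadratic form is the matrix with diagonal entries $1+|c|\xi^2+c_2\xi^4$ and $1+|a|\xi^2+a_2\xi^4$ and off-diagonal entries $-\omega(1+b\xi^2+b_2\xi^4)$. So $S(\omega)=\frac{p}{2(p+2)}\inf\{I_\omega:P_\omega=0\}$ in the homogeneous case, and $P_\omega=0$ together with the Sobolev embedding gives a lower bound. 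Minimizing sequences are precompact up to translations by concentration--compactness: vanishing is excluded by Lions' lemma, and dichotomy by strict subadditivity, which is a consequence of the homogeneity or scaling hypotheses.

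By the Cazenave--Lions / Shatah argument, the stability of $\mathcal G_\omega$ then reduces to showing that $S$ is strictly convex at $\omega$. More precisely, if $S''(\omega)>0$ (or $S$ is strictly convex near $\omega$), then every sequence with $J_\omega\to S(\omega)$ and $\mathcal Q$ close to $\mathcal Q(\mathcal G_\omega)$ is precompact modulo translation. Here one uses that $S$ is concave-type in the sense that $S'(\omega)=\mathcal Q$-value along ground states (from $J_\omega=\mathcal H+\omega\mathcal Q$), and that the conservation of $\mathcal H$ and $\mathcal Q$ under the flow gives the contradiction argument. For part a), where $S$ need not be $C^2$, I will use the one-sided derivatives $S'_\pm(\omega)$ and the criterion ``$S'$ strictly increasing at $\omega$''. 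A monotone function has points of strict increase densely wherever it is not locally constant, and this gives the ``$\omega$ arbitrarily close to $1^-$'' conclusion.

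The core step is the KdV-type limit $\omega\to1^-$. I will set $\epsilon^2=1-\omega$ and rescale $\psi(x)=\epsilon^{\gamma}\Psi(\epsilon x)$, $v(x)=\epsilon^{\gamma}V(\epsilon x)$ with $\Psi\approx V$. Expanding $I_\omega$ in $\epsilon$, the leading term is $\epsilon^{2\gamma-1}\int(\Psi-V)^2$, which forces $\Psi\approx V$. The next order contains the $\xi^2$ coefficient $-c-a-2\omega b$, and this vanishes at $\omega=1$ precisely because $a+c+2b=0$. This is exactly why the hypothesis is imposed: the second-order dispersion drops out and the $\xi^4$ terms (with $c_2+a_2-2b_2$, assumed positive) survive, so the effective limiting functional is that of a fifth-order KdV equation of the form \eqref{5kw} with nonlinearity of order $p+2$, as in \cite{Esfahani-Levandosky-2021}. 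I will show $S(\omega)=\epsilon^{\kappa}\bigl(S_{K}+o(1)\bigr)$, where $S_K$ is the corresponding KdV ground-state level and $\kappa$ depends on $p$; in case c) the derivative factor of homogeneity $\beta$ shifts the exponent. The condition $0<p<8$ (respectively $p+2\beta<8$) is exactly what makes $\kappa$, as a power of $1-\omega$, land in the range where $\omega\mapsto(1-\omega)^{\kappa/2}$ is strictly convex and decreasing, i.e.\ subcritical for fifth-order KdV.

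Matching upper and lower bounds via convergence of rescaled minimizers to KdV ground states gives convexity. In case b), homogeneity makes $S$ differentiable with $S'=\mathcal Q$, and a uniform expansion yields $S''>0$ on $(\omega^*,1)$. In case a) (nonhomogeneous, with $p<q$), the lower-order term $p$ dominates in the limit, and I only obtain convexity along a sequence $\omega_n\to1^-$.

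I expect the main obstacle to be proving the uniform-in-$\epsilon$ convergence of rescaled minimizers to KdV ground states strongly enough to control $S''$, and not just $S$. For this I would first try to use the scaling identity $S(\omega)=\frac{p}{2(p+2)}I_\omega(\text{min})$ together with $S'(\omega)=\frac12 I_2$, which reduces the question to asymptotics of $I_2$ along minimizers.
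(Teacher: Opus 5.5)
Your architecture matches the paper's: ground states by concentration--compactness, the Shatah-type reduction of stability to strict convexity of $S$ (Lemma \ref{prelem}, Theorem \ref{t-45}), and the fifth-order KdV limit made available by $a+c+2b=0$. Your reading of the thresholds is also right. The paper gets $S(\omega)\approx\mathcal J_0(1-\omega^2)^{\frac{3p+8}{4p}}$, and this exponent exceeds $1$ exactly when $p<8$; likewise $\frac{3p+8-2\beta}{4p}>1$ exactly when $p+2\beta<8$.

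The genuine gap is in b) and c), where you need $S''(\omega)>0$ at \emph{every} $\omega\in(\omega^*,1)$. Matching bounds, convergence (even uniform) of the rescaled minimizers to a KdV ground state, and asymptotics of $I_2$ along minimizers give at best $S(\omega)=(1-\omega^2)^{\kappa}(\mathcal J_0+o(1))$ and $S'(\omega)=-(1-\omega^2)^{\kappa-1}(C+o(1))$. These are compatible with $S''$ changing sign infinitely often as $\omega\to1^-$, since the $o(1)$ corrections may oscillate with large $\omega$-derivative. Because $S'$ is a multiple of $I_2$ along ground states, the sign of $S''$ is that of $\frac{d}{d\omega}I_2(\Psi_\omega)$. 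Deciding it requires control of $\partial_\epsilon(z^\epsilon,w^\epsilon)$, not of $(z^\epsilon,w^\epsilon)$ itself.

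That is the ingredient the paper uses. In Lemma \ref{lem48} it writes $I_{2,\omega}(\Psi_\omega)/\omega=-2\epsilon^{\frac{8-p}{p(p+1)}}A(\epsilon)$ and differentiates in $\omega$. The leading term is proportional to $(8-p)A(\epsilon)$, with $A(\epsilon)\to\|w_0\|_{L^2}^2$. The remainder, which involves $A'(\epsilon)$, is bounded by Lemma \ref{lem:H2-bound}: an energy estimate for the $\epsilon$-differentiated system \eqref{sys:scaled} giving $\epsilon^{\frac{p-3}{p+1}}\|(\partial_\epsilon w^\epsilon,\partial_\epsilon z^\epsilon)\|_{\mathcal H_{2,\epsilon}}\le M$. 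This is the most delicate step of the whole argument, since it presupposes a differentiable branch $\epsilon\mapsto(z^\epsilon,w^\epsilon)$. One would naturally get such a branch from an implicit-function argument around a nondegenerate limiting ground state. Your proposal has nothing in its place, so as written it reaches only part a).

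There are also three smaller corrections.

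First, your scaling $\epsilon^2=1-\omega$, $\Psi(\epsilon x)$ is the third-order KdV one. In the direction $\Psi\approx V$, both the $(V'')^2$ term and the leftover $2b(1-\omega)(V')^2$ term are $O(\epsilon^4)$, against $O(\epsilon^2)$ for the $V^2$ term. So no dispersion survives in the limit, and the exponent bookkeeping would give the KdV threshold $p<4$. Balancing $\xi^4$ against $1-\omega$ needs the length scale $(1-\omega)^{-1/4}$, i.e.\ the paper's $y=\epsilon^{\frac1{p+1}}x$ with $1-\omega^2=\epsilon^{\frac4{p+1}}$; only then does $p<8$ appear.

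Second, $|\omega|<1$ alone does not make $I_\omega$ equivalent to the $X$-norm. At high frequency one needs $a_2c_2>\omega^2b_2^2$; compare the paper's restriction $0<|\omega|<\min\{1,-\frac ab,-\frac cb,\frac{a_2}{b_2},\frac{c_2}{b_2}\}$. Moreover, the equivalence constants degenerate as $\omega\to1^-$, which is why the rescaled functionals are needed.

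Third, for a) your conclusion is the paper's, but the justification is off. $S'$ is not known to be monotone, so the remark about monotone functions does not apply. What works is either that $S'<0$ near $1$ with $S'(\omega)\to0$ (the paper's route), or that $0<S(\omega)\le C(1-\omega)^{\kappa}$ with $\kappa>1$. Either fact prevents $S$ from being concave on any interval $(1-\delta,1)$, which yields points of strict convexity arbitrarily close to $1$.
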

The proof of orbital stability is based on a variational characterization of the solitary waves as minimizers of the action functional $J_\omega(\psi,v)$ under the natural constraint $P_\omega(\psi,v)=0.$ The first step consists of showing that the set of ground states is nonempty and that every minimizing sequence is relatively compact in the energy space $X=H^2(\mathbb R)\times H^2(\mathbb R)$, modulo translations. This compactness property is obtained through concentration–compactness arguments, where the KdV scaling regime plays a crucial role. More precisely, when the wave velocity satisfies $|\omega|\to 1^{-}$, the higher-order Boussinesq system asymptotically approaches a generalized fifth-order KdV equation, at the travelling wave level. This asymptotic reduction allows us to identify the limiting variational structure and to exclude vanishing and dichotomy scenarios for minimizing sequences. As a consequence, minimizing sequences converge strongly, up to translations, towards elements of the ground-state manifold $\mathcal G_\omega$. The key fact to prove that the ground-state solution set $\mathcal G_\omega$ is stable under Grillakis' {\sl et al.} variational approach is based on the compactness property of   $\mathcal G_\omega$ and that the limit equation for a special renormalized sequence in  $\mathcal G_\omega$ is precisely a general fifth-order KdV equation, as $\omega \to 1^{-}$. Our stability results are exactly those obtained for S. Levandoski in the case of a fifth-order KdV equation with a $(p+2)$-homogeneous nonlinearity and for Esfahani-Levandosky for a fifth-order KdV equation with a nonlinearity not necessarily homogeneous, but with special scaling properties (see \cite{S-Levand-1999}, \cite{Esfahani-Levandosky-2021}).  

\subsection{Novelties and structure}
The main contribution of this work is the study of orbital stability for a broad class of higher-order Boussinesq systems by combining variational methods, KdV-type asymptotic analysis, and Hamiltonian stability theory. A key observation is that, as $|\omega|\to 1^{-}$, Boussinesq solitary waves converge, after a suitable rescaling, to solitary-wave solutions of a generalized fifth-order KdV equation. This asymptotic connection provides essential information for the stability analysis.
Another novelty is the use of the Grillakis–Shatah–Strauss framework without requiring a detailed spectral analysis of the linearized operator. Instead, the proof relies on variational methods, concentration--compactness arguments, and asymptotic estimates. As a result, we obtain orbital stability for a wider class of nonlinear interactions than those previously considered. Numerical simulations also suggest that the stability region may be larger than that predicted by the analytical theory.

We conclude this introduction by outlining the organization of the paper. In Section \ref{sec2}, we present several preliminary results and review the existence theory for solitary-wave solutions of system \eqref{trav-eqs}, including the existence of ground-state solitary waves obtained through a variational approach. Section \ref{sec3} is devoted to the analysis of the long-wave KdV scaling limit and the convergence of minimizers toward solitary-wave solutions of a limiting generalized fifth-order KdV equation. In Section \ref{sec4}, we establish the positivity of the second derivative of the action functional in the small-amplitude regime and use this property to prove the orbital stability of ground-state solitary waves, thereby establishing the main result of the paper, Theorem \ref{main}. Finally, section \ref{sec5} presents numerical simulations that illustrate and complement the theoretical results. 

\section{Some results}\label{sec2}
As done by Esfahani-Levandosky in \cite{Esfahani-Levandosky-2021}, we state the assumptions on the nonlinear part $F$.

\vspace{0.2cm}

\nd {\bf Assumption of F}.  Define the function $G(w)= w\cdot \nabla F(w)$ for $w\in \R^4$:
\begin{itemize}
\item[(a)] There is $p>0$ such that for $w\cdot \nabla F(w)\geq (p+2) G(w)$ for $w\in \R^4$.
\item[(b)] There exist $0<q_1\leq q_2 <\infty$ and $C>0$ such that for $w\in \R^4$,
\[
|D^2F(w)|\leq C(|w|^{q_1}+ |w|^{q_2}).
\]
\item[(c)] There exist $u, v\in H^2(\R)$ such that
\[
\int_{\R}F(u, u_x, v, v_x)\,dx>0.
\]
\end{itemize}
From these assumptions, R. de A. Capistrano-Filho, J. C. Mu\~noz, and J. R. Quintero in \cite[Lemma 2.1]{RC-JM-JQ-2025} extended Lemmas 2.5 and 2.6 of Esfahani-Levandosky's work in \cite{Esfahani-Levandosky-2021}.
\begin{lem}\label{esf-lev-2021}
Under the assumptions on $F$, for $w, \tilde w\in \R^2$ we have
\begin{itemize}
\item[i)]  $G(w, \tilde w)\geq (p+2)F(w, \tilde w )$.
\item[ii)] $R(\alpha (w, \tilde w))\geq \alpha^{p+2}R(w, \tilde w)$ with $\alpha\geq 1$ and $R(\alpha(w, \tilde w))\leq \alpha^{p+2}R(w, \tilde w), ~~ $with $0<\alpha\leq 1$ for $R=G$ or $R=F$.
\item[iii)] $|F(w, \tilde w)|+|G(w, \tilde w)|+|\left<(w, \tilde w), G(w, \tilde w)\right>|\leq C(|(w,\tilde w)|^{q_1+2}+ |(w, \tilde w)|^{q_2+2})$.
\item[iv)] $N(\alpha w)\geq \alpha^{p+2}N(w)$ with $\alpha\geq 1$ and $N(\alpha w)\leq \alpha^{p+2}N(w)$ with $0<\alpha\leq 1$.
\item[v)] $N(w)\geq (p+2)K(w)$.
\item[vi)] $\left<N'(w), w \right>\geq (p+2)N(w)$.
\item[vii)] $|N(w)|+|K(w)|+|\left<N'(w), w \right>|\leq C\left(||w||_{H^2}^{q_1+2}+ ||w||_{H^2}^{q_2+2}\right)$, ~ for $w\in H^2(\R)\times H^2(\R)$.
\end{itemize}
\end{lem}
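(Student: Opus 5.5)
The plan is to reduce every item to the one-variable function $\alpha\mapsto R(\alpha w)$, where $w$ now denotes the whole vector in $\R^4$. Pointwise, $G(w)=w\cdot\nabla F(w)$. Assumption (a) is really meant as $G(w)\geq (p+2)F(w)$; the printed form $w\cdot\nabla F\ge (p+2)G$ is presumably a typo. With this reading, item i) is just assumption (a).

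For ii), with $R=F$, set $h(\alpha)=F(\alpha w)$. Then
\[
\alpha h'(\alpha)=G(\alpha w)\geq (p+2)h(\alpha),
\]
so $\frac{d}{d\alpha}\bigl(\alpha^{-(p+2)}h(\alpha)\bigr)\geq 0$. Integrating from $1$ to $\alpha$ (or from $\alpha$ to $1$) gives both inequalities.

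For $R=G$ the same monotonicity is needed: $\alpha\frac{d}{d\alpha}G(\alpha w)\geq (p+2)G(\alpha w)$, that is, $w\cdot\nabla G(w)\geq (p+2)G(w)$. This is the condition in Esfahani--Levandosky's setting. I expect this step to be the main obstacle, because it does not follow from (a) alone. I would therefore state it as part of the standing assumption, exactly as in \cite{Esfahani-Levandosky-2021}, and then run the same ODE argument. Item vi) is its integrated version, so this assumption also gives vi) for free.

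For iii), use $F(0)=0$, $\nabla F(0)=0$ (normalized) and Taylor's formula with integral remainder. Assumption (b) then gives
\[
|\nabla F(w)|\leq C\bigl(|w|^{q_1+1}+|w|^{q_2+1}\bigr),\qquad |F(w)|\leq C\bigl(|w|^{q_1+2}+|w|^{q_2+2}\bigr).
\]
Multiplying the gradient bound by $|w|$ bounds $G$. The term $\langle w,\nabla G\rangle=G+w^TD^2F\,w$ is bounded the same way.

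For the functional statements, write $N(\psi,v)=\int G(\psi,\psi_x,v,v_x)\,dx$, which is how $N=\langle K',(\psi,v)\rangle$ reads after integrating by parts. Items iv) and v) then follow by integrating the pointwise ii) and i). For vi), note $\langle N'(w),w\rangle=\int (w\cdot\nabla G)(w)\,dx$ and apply the pointwise condition above.

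For vii), integrate iii). The needed $L^{q+2}$ bounds on $\psi,\psi_x,v,v_x$ come from the one-dimensional Sobolev embedding $H^1(\R)\hookrightarrow L^\infty$, via
\[
\int|w|^{q+2}\,dx\leq \|w\|_\infty^{q}\,\|w\|_2^2\leq C\|w\|_{H^2}^{q+2}.
\]
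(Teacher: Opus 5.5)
Your route is the standard Esfahani--Levandosky argument that the paper relies on. You reduce everything to the scalar function $\alpha\mapsto R(\alpha w)$, get iii) from (b) by Taylor's formula, integrate the pointwise facts over $\R$, and use $H^1(\R)\hookrightarrow L^\infty(\R)$ for vii). The paper gives no proof of its own, only the citation \cite[Lemma 2.1]{RC-JM-JQ-2025}.

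The one thing to fix is which typo you correct in (a). The intended hypothesis is the one you end up importing, $w\cdot\nabla G(w)\ge (p+2)G(w)$. That is why i) is listed as a \emph{conclusion} of the lemma, and why ii) for $R=G$ and vi) are claimed at all. With this reading, the step you flagged as the main obstacle is simply the hypothesis. But your sentence ``item i) is just assumption (a)'' is no longer justified, and i) needs an argument.

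It takes one line. First run your ODE argument on $g(\alpha)=G(\alpha w)$ to get ii) for $R=G$. Then, with $F(0)=0$,
\[
F(w)=\int_0^1 w\cdot\nabla F(tw)\,dt=\int_0^1 \frac{G(tw)}{t}\,dt\le G(w)\int_0^1 t^{p+1}\,dt=\frac{G(w)}{p+2},
\]
using ii) for $G$ with $\alpha=t\in(0,1]$. The integrand $t^{-1}G(tw)=w\cdot\nabla F(tw)$ is continuous on $[0,1]$, so nothing goes wrong at $t=0$. Only after this does ii) for $R=F$ follow from i) by your argument. Items iv)--vii) then go through as you wrote them.

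So the correct logical order is ii) for $G$ $\Rightarrow$ i) $\Rightarrow$ ii) for $F$, all from the single condition on $G$ together with (b) and $F(0)=0$. You do not need to carry $G\ge (p+2)F$ as a separate standing assumption. It is a consequence of the $G$-condition, and, as you correctly observed, a strictly weaker one.
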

\nd We note that, under the assumption that $F$ is $(p+2)$-homogeneous, Euler's identity yields
\[ qF_q(q,r,s,t)+rF_r(q,r,s,t)+ sF_s(q,r,s,t)+tF_t(q,r,s,t)= (p+2)F(q,r,s,t), 
\]
and so that
\begin{align*}
N(u, v)&=\left<K'(\psi, v), (\psi, v)\right>\\
&= \int_{\R}\left(uF_q(u, u_x, v, v_x)+u_xF_r(u, u_x, v, v_x)+vF_s(u, u_x, v, v_x)+v_xF_t(u, u_x, v, v_x)\right)\,dx\\
&=(p+2)K(u, v). 
\end{align*}

Moreover, inspired by the work of Esfahani-Levandosky, we present the following examples of functionals $F:\mathbb{R}^4\to\mathbb{R}$ satisfying the above assumptions:
\begin{itemize}
\item[(a)] 
\[
F(u,v)=F_1(u)+F_1(v),
\]
where
\[
F_1(u)=\frac{|u|^{q+2}}{q+2}+\frac{|u|^{p+2}}{p+2},
\]
with $0<p<q$.

\item[(b)]
\[
F(u,v)=F_1(u)+F_1(v),
\]
where
\[
F_1(u)=\sum_{j=1}^{m} a_j|u|^{q_j+2}
-\sum_{k=1}^{n} b_k|u|^{p_k+2},
\]
with $a_j,b_k>0$ and $0<p_k<q_j$ for all $j,k\in\mathbb{N}$.

\item[(c)]
\[
F(u,\partial_xu,v,\partial_xv)=
F_1(u, v),
\]
where $F_1$ is  $(p+2)$-homogeneous or $F_1(q, \cdot)$ is $\sigma$-homogeneous and $F_1(\cdot, r)$ is $\beta$-homogeneous such that $\sigma+\beta=p+2$.

\item[(d)]
\[
F(u,\partial_xu,v,\partial_xv)
=
F_1(u,\partial_xu)+F_1(v,\partial_xv),
\]
where $F_1$ is $(p+2)$-homogeneous, or $F_1(q, \cdot)$ is $\sigma$-homogeneous and $F_1(\cdot, r)$ is $\beta$-homogeneous such that $\sigma+\beta=p+2$.
\end{itemize}

The following theorem summarizes the results related to the existence of solitary-wave solutions for the general Boussinesq system (see \cite{RC-JM-JQ-2025}):
\begin{thm}
For $0<|w|<\min\left\{1,\frac{-a}{b},\frac{-c}{b},\frac{a_2}{b_2},\frac{c_2}{b_2}\right\}$, we have that:
\begin{itemize}
\item[i.] the functional $I_{\omega}$ is nonnegative and there are positive constants
$M_1(\omega, a, a_2, b, b_2, c, c_2 )$ and $M_2(\omega, a, a_2, b, b_2, c, c_2 )$ such that
\begin{equation}\label{IG1}
M_1 \|(\psi, v)\|_{X}^{2} \leq I_\omega(\psi, v)\leq M_2
 \|(\psi, v)\|_{X}^{2}.
\end{equation}
\item[ii.] $S(\omega)$ exists, is positive and
$$
S(\omega)=\inf\left\{J_{\omega, p}(\psi, v): P_{\omega}(\psi, v)\leq 0 \right\},
$$
where $$J_{\omega, p}=J_\omega-\frac{1}{p+2} P_{\omega}.$$
\item[iii.] If $(\psi_0, v_0)\in X$ is such that $P_{\omega}(\psi_0, v_0)=0$ and $J_{\omega}(\psi_0, v_0)=S(\omega)$, then $(\psi_0, v_0)$ is a weak solution of (\ref{trav-eqs}). 
\item[iv.] Given a  minimizing sequence  $(\psi_{n}, v_n)_n$ of $S(\omega)$,  there exist a subsequence $\left(\psi_{n_{k}}, v_{n_{k}}\right)_k$, a sequence of points $y_{k}\in \mathbb{R}$  and $(\psi_0, v_0)\in X\setminus\{0\}$ such that $(\psi_{n_{k}}(.+y_{k}), v_{n_k}(.+y_{k}))\rightarrow (\psi, v)$ in $X$ and $J_{\omega}(\psi_0, v_0)=S(\omega)$. In other words, $(\psi_0, v_0)$ is a minimizer for $S(\omega)$.
\end{itemize}
\end{thm}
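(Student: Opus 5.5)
The plan is the standard Nehari-manifold argument, fixing $\omega$ in the stated range throughout.

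\textbf{Coercivity (part i).} I would pass to Fourier variables. With $\hat\psi,\hat v$ the transforms,
\[
I_\omega(\psi,v)=\int_{\R}\Big[A(\xi)|\hat\psi|^2+B(\xi)|\hat v|^2-2\omega\, m(\xi)\,\mathrm{Re}(\hat\psi\bar{\hat v})\Big]d\xi ,
\]
where
\[
A=1-c\xi^2+c_2\xi^4,\qquad B=1-a\xi^2+a_2\xi^4,\qquad m=1+b\xi^2+b_2\xi^4 .
\]
I take $b,b_2\ge 0$, which is implicit in the stated ratios. The hypothesis on $|\omega|$ gives, term by term,
\[
|\omega| m\le(1-\delta)\,\min(A,B),\qquad \delta=\delta(\omega)>0 .
\]
Using $2|\hat\psi\hat v|\le|\hat\psi|^2+|\hat v|^2$, this yields
\[
I_\omega\ge\delta\int_{\R}\min(A,B)\big(|\hat\psi|^2+|\hat v|^2\big)\,d\xi .
\]
Since $\min(A,B)\sim(1+\xi^2)^2$, the lower bound in \eqref{IG1} follows, and the upper bound is trivial.

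\textbf{Positivity and the relaxed characterization (part ii).} By Lemma \ref{esf-lev-2021}(vii),
\[
|N(w)|\le C\big(\|w\|^{q_1+2}+\|w\|^{q_2+2}\big).
\]
On $\M_\omega$ we have $I_\omega=N$, so $M_1\|w\|^2\le C(\|w\|^{q_1+2}+\cdots)$ gives a lower bound $\|w\|_X\ge r_0>0$.

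Next I write $J_\omega=J_{\omega,p}$ on $\M_\omega$, where
\[
J_{\omega,p}=\tfrac{p}{2(p+2)}I_\omega+\tfrac{1}{p+2}\big(N-(p+2)K\big)\ge\tfrac{p}{2(p+2)}I_\omega
\]
by Lemma \ref{esf-lev-2021}(v). Hence $S(\omega)\ge\tfrac{p}{2(p+2)}M_1r_0^2>0$. Nonemptiness of $\M_\omega$ uses assumption (c): since $N\ge(p+2)K>0$ for some $w$, scaling $\alpha w$ and Lemma \ref{esf-lev-2021}(iv) give $P_\omega(\alpha w)<0$ for large $\alpha$. Continuity then produces $\alpha^*$ with $P_\omega(\alpha^*w)=0$.

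For the characterization via $P_\omega\le 0$, take $w$ with $P_\omega(w)<0$. Then $\alpha\mapsto P_\omega(\alpha w)$ is positive for small $\alpha$, because the $I$ term dominates. So there is $\alpha_0\in(0,1)$ with $P_\omega(\alpha_0w)=0$. I would then show that $\alpha\mapsto J_{\omega,p}(\alpha w)$ is nondecreasing, using
\[
\tfrac{d}{d\alpha}\big(N-(p+2)K\big)(\alpha w)=\tfrac1\alpha\big(\langle N'(\alpha w),\alpha w\rangle-(p+2)N(\alpha w)\big)\ge0
\]
from Lemma \ref{esf-lev-2021}(vi). It follows that $J_{\omega,p}(\alpha_0w)\le J_{\omega,p}(w)$, which gives equality of the two infima.

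\textbf{Euler--Lagrange (part iii).} A minimizer satisfies $J'_\omega=\lambda P'_\omega$. Pairing with $w$ gives
\[
0=\lambda\langle P'_\omega(w),w\rangle=\lambda\big(2I_\omega-\langle N',w\rangle\big).
\]
Since $I_\omega=N$ and $\langle N',w\rangle\ge(p+2)N$, this quantity is $\le -pN<0$, so $\lambda=0$.

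\textbf{Compactness (part iv), the main obstacle.} I would use Lions' concentration-compactness applied to the densities $\rho_n$ of $I_\omega(w_n)$. Minimizing sequences are bounded, with $I_\omega\to\frac{2(p+2)}{p}S$ after using the relaxed problem.

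\emph{Vanishing.} Suppose $\sup_y\int_{y-1}^{y+1}(\text{$H^2$ density})\to0$. Lions' lemma applied to $w_n$ and $\partial_xw_n$, which are bounded in $H^1$, gives $\|(w_n,\partial_x w_n)\|_{L^r}\to0$ for $r>2$. Then Lemma \ref{esf-lev-2021}(iii) forces $N(w_n)\to0$. But $P_\omega(w_n)=o(1)$ forces $I_\omega\to0$, contradicting $S>0$.

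\emph{Dichotomy.} Split $w_n=w_n^1+w_n^2+o(1)$ with smooth cutoffs; the commutator errors vanish since the supports separate. Using the $\le0$ formulation and the strict subadditivity coming from the superquadratic scaling in Lemma \ref{esf-lev-2021}(ii), one shows $S\ge$ a quantity strictly larger than $S$, unless one piece carries all the mass. The concrete inequality to verify is: if $P_\omega(w^1)\le0$, then $S\le J_{\omega,p}(w^1)<J_{\omega,p}(w_n)$, because the other piece has positive $I_\omega$. Otherwise both pieces have $P_\omega>0$, which contradicts $P_\omega(w_n)\le o(1)$.

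\emph{Conclusion.} Compactness then gives $w_n(\cdot+y_n)\to w$ weakly, and strongly in $L^r_{loc}$. Weak lower semicontinuity of $I_\omega$ gives $P_\omega(w)\le0$ and $J_{\omega,p}(w)\le S$, so $w$ is a minimizer. Equality of the norms then upgrades the convergence to strong convergence in $X$.
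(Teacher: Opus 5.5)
Your overall route is the one the paper sketches, deferring the details to \cite{RC-JM-JQ-2025}: coercivity of $I_\omega$, the Nehari constraint and its relaxation through $J_{\omega,p}$, and Lions' concentration--compactness for the measures with density $\rho$. Your Fourier-multiplier proof of (i) is a clean substitute for the completing-the-square identities of Remark \ref{I+}, and your arguments for (ii) and (iii) are fine.

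The gap is in the exclusion of dichotomy in (iv). After splitting you have $P_\omega(w_n^1)+P_\omega(w_n^2)=P_\omega(w_n)+o(1)=o(1)$. You then dismiss the case in which both pieces have $P_\omega>0$ as contradicting $P_\omega(w_n)\le o(1)$. It does not: two positive numbers can add up to $o(1)$. So the case $P_\omega(w_n^1),P_\omega(w_n^2)\to0^+$ remains open, and it cannot be excluded by sign considerations alone. With the $\epsilon$-errors of the usual form of Lions' dichotomy it is in fact the typical situation. Your first case only covers $P_\omega(w_n^i)\le0$ exactly, which is where the characterization in (ii) applies.

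The missing ingredient is a stability estimate for the constraint. Suppose $\eta_n:=P_\omega(w_n^1)>0$ and $\eta_n\to0$. Since $I_\omega(w_n^1)\to L_1>0$, you have $N(w_n^1)=I_\omega(w_n^1)-\eta_n\ge L_1/2$ for large $n$. Lemma \ref{esf-lev-2021}(iv) gives $P_\omega(\alpha w_n^1)\le\alpha^2\big(I_\omega(w_n^1)-\alpha^pN(w_n^1)\big)$ for $\alpha\ge1$. Hence there is $\alpha_n\in\big(1,(1+2\eta_n/L_1)^{1/p}\big]$ with $P_\omega(\alpha_nw_n^1)=0$, so $\alpha_n\to1$. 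By Lemma \ref{esf-lev-2021}(vii), $\frac{d}{d\alpha}J_{\omega,p}(\alpha w)$ is bounded for $\alpha\in[1,2]$ and $w$ in bounded subsets of $X$. Therefore $J_{\omega,p}(w_n^1)\ge J_{\omega,p}(\alpha_nw_n^1)-C(\alpha_n-1)\ge S(\omega)-o(1)$. Combine this with $J_{\omega,p}(w_n^2)\ge\frac{p}{2(p+2)}I_\omega(w_n^2)\to\frac{p}{2(p+2)}(L-L_1)$ and the almost-additivity of $J_{\omega,p}$. This gives $S(\omega)\ge S(\omega)+\frac{p}{2(p+2)}(L-L_1)$, a contradiction. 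The case where the small positive piece is $w_n^2$ is symmetric.

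Three smaller points.
\begin{itemize}
\item[(1)] The claim $I_\omega(w_n)\to\frac{2(p+2)}{p}S(\omega)$ holds only when $N=(p+2)K$, i.e.\ in the homogeneous case. In general, $J_{\omega,p}=\frac{p}{2(p+2)}I_\omega+\frac{1}{p+2}\big(N-(p+2)K\big)$ only gives $\limsup_n I_\omega(w_n)\le\frac{2(p+2)}{p}S(\omega)$, which is all you need.
\item[(2)] Transferring vanishing, or smallness on the transition annulus, from $\rho_n$ to the $H^2$ density requires the pointwise bound $\rho\ge M_1\sigma$ recorded in the paper. It follows from your estimate carried out order by order at each point, not from the Fourier computation as such.
\item[(3)] In the conclusion, convergence in $L^r_{\rm loc}$ does not let you pass to the limit in $N$ and $K$. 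Use the tightness from the compactness alternative to get $(w_n,\partial_xw_n)\to(w,\partial_xw)$ in $L^2\cap L^\infty$, for instance via strong convergence in $H^s$ with $3/2<s<2$. This also gives $N(w)=\lim_n I_\omega(w_n)>0$, hence $w\ne0$.
\end{itemize}
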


\begin{rem}\label{I+}
For $0<|w|<\min\left\{1,\frac{-a}{b},\frac{-c}{b},\frac{a_2}{b_2},\frac{c_2}{b_2}\right\}$, we note that:
\begin{align*}
I_\omega(\psi,v) =&\int_{\mathbb R}
\left[(\psi-\omega v)^2+\left(1-\omega^2 \right)v^2-c\left(\psi' -\frac{\omega b}{-c} v'\right)^2+\left(-a-\frac{\omega^2 b^2}{-c}
\right)(v')^2\right. \\ & \left. +c_2\left(\psi''- \frac{\omega b_2}{c_2} v'' \right)^2+ \left(a_2-\frac{\omega^2 b_2^2}{c_2}\right)(v'')^2\right]dx \geq 0,
\end{align*}
and
\begin{align*}
I_\omega(\psi,v)
=&\int_{\mathbb R}
\left[(v-\omega \psi)^2+\left(1-\omega^2 \right)\psi^2-a\left(v' -\frac{\omega b}{-a} \psi'\right)^2+\left(-c-\frac{\omega^2 b^2}{-a}
\right)(\psi')^2\right. \\ & \left. +a_2\left(v''- \frac{\omega b_2}{a_2} v'' \right)^2+ \left(c_2-\frac{\omega^2 b_2^2}{a_2}\right)(\psi'')^2\right]dx \geq 0.
\end{align*}
\end{rem}

We point out that minimizing sequences for $S(\omega)$ are bounded in $X$. In fact, let $(\psi_n, v_n)\in X$ be such that $I_{\omega}(\psi_n, v_n)=N(\psi_n,v_n)$ and that $J_{\omega}(\psi_n, v_n)=S(\omega) +o(1)$, as $n\to \infty$. Now, we note that
\begin{align*}
\frac12 I_{\omega}(\psi_n, v_n)
&\leq J_{\omega}(\psi_n, v_n)+\frac1{p+2}N(\psi_n, v_n) \\
&\leq S(\omega)+\frac1{p+2}I_{\omega}(\psi_n, v_n)+o(1),
\end{align*}
which implies that $||(\psi_n, v_n)||_X$ is bounded, since
\[
\left( \frac{p}{2(p+2)}\right)I_{\omega}(\psi_n, v_n)\leq S(\omega)+o(1).
\]
Therefore,  the existence of solitary-wave solutions follows by using the Lions' Concentration-Compactness Principle (\cite{Lions-1984a,Lions-1984b}) applied to the measure defined by the density $\rho$
$$
\rho(\psi,v) =\psi^2 - c(\psi')^{2} + c_2(\psi'')^{2}+
v^2-a( v')^2+a_2( v'')^2 -2\omega\left(\psi v+b\psi' v'+b_2 v''\psi'' \right).
$$
We note from the proof of inequality (\ref{IG1}) that
$$
M_1 \sigma(\psi, v) \leq \rho(\psi, v)\leq M_2 \sigma(\psi, v),
$$
where $\sigma$ is given by
\[
\sigma(\psi, v)=\psi^2+v^2+(\psi')^2+(v')^2 +(\psi'')^2+(v'')^2.
\]
If we had a minimizing sequence $(\psi_n, v_n)_n\subset X$ for $S(\omega)$, meaning that $P(\psi_n, v_n)=0$ and $J_{\omega}(\psi_n, v_n) \to S(\omega)$, we consider  the sequence of measures $(\nu_n)_n$ given by
\[
\nu_n(A)= \int_A \rho(\psi_n, v_n) \,dx,
\]
which is such that $\nu_n(\R) \to L$, according to the previous discussion. 

Now, remember that the set of ground states is given by
$$
\mathcal G_{\omega}=\left\{(\psi, v)\in X\setminus\{0\}: J_{\omega}(\psi, v)=S(\omega),  \  \mbox{and} \ P_{\omega}(\psi, v)=0\right\}.
$$
We note also that $\mathcal G_{\omega}$ can be characterized as
$$
\mathcal G_{\omega}=\left\{(\psi, v)\in X\setminus\{0\}: J_{\omega, p}(\psi, v)=S(\omega),  \  \mbox{and} \ P_{\omega}(\psi, v)\leq0\right\}.
$$
R. de A. Capistrano-Filho {\sl et al.} established in \cite[Lemma 2.10]{RC-JM-JQ-2025} the following result, which follows from the variational characterization of $S(\omega)$. 
\begin{lem}\label{estd}
Let $0<|w|<\min\left\{1,-\frac{a}{b},-\frac{c}{b},\frac{a_2}{b_2},\frac{c_2}{b_2}\right\}$. Then:
\begin{itemize}
\item[i.] If $0<\omega<1$, then $S({\omega})$ is uniformly bounded.
\item[ii.] If $0<{\omega}_1 <\omega_2<1$ and $(\psi,v)\in\mathcal{G}_{\omega}$, then we have $I_{2,{\omega}}(\psi,v)$ is uniformly bounded on $[{\omega}_1,{\omega}_2]$.
\item[iii.] For $0< \omega <1$ and $(u, v)\in \mathcal G_{\omega}$, we have $S'(\omega)=I_{2}(u, v)$.
\end{itemize}
\end{lem}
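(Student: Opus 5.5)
The three items all rest on the two equivalent descriptions of $S(\omega)$: as a constrained infimum over $\mathcal M_\omega$, and as $\inf\{J_{\omega,p}: P_\omega\le 0\}$. We treat (i) and (iii) as the main parts and obtain (ii) from them.

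\textbf{Item (i).} Fix $(\psi_0,v_0)\in X$ with $K(\psi_0,v_0)>0$, which exists by Assumption (c). The aim is to bound $S(\omega)$ by a quantity that depends continuously on $\omega$.
\begin{itemize}
\item By Lemma \ref{esf-lev-2021}(v), $N(\psi_0,v_0)\ge (p+2)K(\psi_0,v_0)>0$.
\item By the scaling in Lemma \ref{esf-lev-2021}(iv), $N(\lambda(\psi_0,v_0))$ grows at least like $\lambda^{p+2}$ for $\lambda\ge1$. Since $I_\omega(\lambda\cdot)=\lambda^2 I_\omega$, there is a unique $\lambda_\omega$ with $P_\omega(\lambda_\omega(\psi_0,v_0))=0$.
\item This gives $S(\omega)\le J_{\omega,p}(\lambda_\omega(\psi_0,v_0))\le \tfrac{p}{2(p+2)}\lambda_\omega^2 I_\omega(\psi_0,v_0)$.
\item $I_\omega(\psi_0,v_0)$ is affine in $\omega$, so it is bounded by a constant independent of $\omega\in(0,1)$.
\item From $\lambda_\omega^2 I_\omega = N(\lambda_\omega w)\ge \lambda_\omega^{p+2}N(w)$ (when $\lambda_\omega\ge1$) we get $\lambda_\omega^{p}\le I_\omega/N(w)$. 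So $\lambda_\omega$ is bounded, and hence $S(\omega)$ is bounded.
\end{itemize}

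\textbf{Item (ii).} For $(\psi,v)\in\mathcal G_\omega$ we have $\tfrac{p}{2(p+2)}I_\omega(\psi,v)\le J_{\omega,p}=S(\omega)$, so $I_\omega$ is bounded by (i).
\begin{itemize}
\item On $[\omega_1,\omega_2]$, with $\omega_2<1$ and strictly inside the admissible range, the coercivity constant $M_1(\omega)$ in \eqref{IG1} is bounded below uniformly. This follows from the explicit completing-the-square formula in Remark \ref{I+}, whose coefficients $1-\omega^2$, $-a-\omega^2b^2/(-c)$, etc.\ depend continuously on $\omega$.
\item Hence $\|(\psi,v)\|_X$ is uniformly bounded, and $|I_{2,\omega}|\le C\omega\|(\psi,v)\|_X^2$ is bounded.
\end{itemize}

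\textbf{Item (iii).} This is the main obstacle, since $S$ is only an infimum and is not a priori differentiable. We prove one-sided difference quotient bounds.
\begin{itemize}
\item Take $Y=(u,v)\in\mathcal G_\omega$ and $\omega'$ near $\omega$. Choose $\lambda=\lambda(\omega')$ with $P_{\omega'}(\lambda Y)=0$.
\item Since $P_\omega(Y)=0$ and $P_\omega$ is $C^1$ in $(\omega,\lambda)$ with $\partial_\lambda P_\omega(\lambda Y)|_{\lambda=1}=2I_\omega-\langle N'(Y),Y\rangle\le -pI_\omega<0$ (Lemma \ref{esf-lev-2021}(vi)), the implicit function theorem gives $\lambda(\omega')=1+O(\omega'-\omega)$.
\item Then $S(\omega')\le J_{\omega'}(\lambda Y)$. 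Expanding,
\[J_{\omega'}(\lambda Y)=J_\omega(Y)+\partial_\lambda J_\omega(\lambda Y)|_{\lambda=1}(\lambda-1)+\tfrac12(\omega'-\omega)I_2(Y)+o(\omega'-\omega).\]
\item Here $\partial_\lambda J_\omega(\lambda Y)|_{\lambda=1}=P_\omega(Y)=0$. The $\lambda$-variation therefore drops out, and $S(\omega')\le S(\omega)+\tfrac12(\omega'-\omega)I_2(Y)+o(\omega'-\omega)$.
\item Dividing by $\omega'-\omega$ of either sign gives $D^+S(\omega)\le \tfrac12 I_2(Y)\le D_-S(\omega)$.
\end{itemize}

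For the reverse inequalities we take $Y_{\omega'}\in\mathcal G_{\omega'}$ and test at $\omega$ in the same way.
\begin{itemize}
\item This gives $S(\omega)\le S(\omega')+\tfrac12(\omega-\omega')I_2(Y_{\omega'})+o(\omega'-\omega)$.
\item For this to be usable, the $o$-term must be uniform. That follows from the uniform bounds in (ii) together with the bound $\partial_\lambda P\le -pI<0$.
\item We also need $I_2(Y_{\omega'})\to I_2(Y)$ for some $Y\in\mathcal G_\omega$. We get this from the compactness of minimizing sequences (item iv of the existence theorem): $Y_{\omega'}$ is a minimizing sequence for $S(\omega)$ up to rescaling, so it converges modulo translations to a ground state.
\end{itemize}

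The upper and lower bounds then pinch, so $S$ is differentiable. The derivative is the same value for every $Y\in\mathcal G_\omega$ and equals $\tfrac12 I_2(Y)$ in our normalization $J=\tfrac12 I-K$, which matches the stated formula $S'(\omega)=I_2$ up to the paper's convention. The delicate point is continuity of $I_2$ across ground states without a uniqueness result. If compactness gives only subsequential limits, we settle for one-sided derivatives and note that these agree, because every $Y\in\mathcal G_\omega$ yields the same bound.
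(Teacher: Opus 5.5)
The paper gives no proof of this lemma. It imports it from the authors' earlier work, remarking only that it follows from the variational characterization of $S(\omega)$. So there is no route to compare against, and the proposal has to stand on its own. Items (ii) and (iii) up to the first-order inequality $D^+S(\omega)\le \frac12 I_2(Y)\le D_-S(\omega)$ for every $Y\in\mathcal G_\omega$ are sound. Item (i) is sound after one fix, given next.

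Two smaller corrections. In (i), the inequality $J_{\omega,p}(\lambda_\omega Y_0)\le \frac{p}{2(p+2)}\lambda_\omega^2 I_\omega(Y_0)$ goes the wrong way in the nonhomogeneous case. On $\mathcal M_\omega$ one has $J_{\omega,p}=\frac{p}{2(p+2)}I_\omega+(\frac{1}{p+2}N-K)$, and Lemma \ref{esf-lev-2021}(v) makes the bracket nonnegative. Instead, use $J_\omega(\lambda_\omega Y_0)=\frac12\lambda_\omega^2 I_\omega(Y_0)-K(\lambda_\omega Y_0)$. When $\lambda_\omega\ge 1$ you have $K(\lambda_\omega Y_0)\ge\lambda_\omega^{p+2}K(Y_0)>0$; otherwise bound $|K(\lambda_\omega Y_0)|$ by (vii). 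The factor $\frac12$ in (iii) is not a convention issue. With $J_\omega=\frac12 I_1+\frac{\omega}{2}I_2-K$ one has $\partial_\omega J_\omega=\frac12 I_2=\mathcal Q$, so your value is correct and the displayed $S'(\omega)=I_2$ is off by a factor $2$. Also, in the reverse estimate, uniform negativity of $\partial_\lambda P$ needs $I_{\omega'}(Y_{\omega'})\ge\delta>0$. This follows from $I=N\le C(\|Y\|_X^{q_1+2}+\|Y\|_X^{q_2+2})$ on $\mathcal M_{\omega'}$ together with coercivity.

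\textbf{The genuine gap is the final pinching step.} Carried out carefully, your two estimates give the following for $\omega'\downarrow\omega$:
\begin{itemize}
\item The difference quotients are bounded above by $\frac12 I_2(Y)$ for \emph{every} $Y\in\mathcal G_\omega$.
\item They are bounded below only by $\frac12 I_2(Y_*)$, where $Y_*$ is a subsequential limit of translates of $Y_{\omega'}$.
\end{itemize}
The same holds symmetrically from the left. Hence $S'_+(\omega)=\frac12\min_{\mathcal G_\omega}I_2$ and $S'_-(\omega)=\frac12\max_{\mathcal G_\omega}I_2$. So $S'_+(\omega)\le S'_-(\omega)$, with equality if and only if $I_2$ is constant on $\mathcal G_\omega$.

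Your closing sentence (``these agree, because every $Y\in\mathcal G_\omega$ yields the same bound'') does not follow. The family of inequalities $D^+S(\omega)\le\frac12 I_2(Y)\le D_-S(\omega)$ is fully compatible with a concave corner of $S$ at $\omega$. Full differentiability at every $\omega$, with the formula valid for every ground state, needs an extra ingredient. Examples are uniqueness of ground states modulo translation, or an independent proof that $\mathcal Q$ is constant on $\mathcal G_\omega$. Neither the proposal nor the paper's variational setup provides one.

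Your argument does establish the following:
\begin{itemize}
\item $S$ is locally Lipschitz.
\item $S$ has one-sided derivatives everywhere.
\item $S$ is differentiable outside an at most countable set.
\item At each point of differentiability, $S'(\omega)=\frac12 I_2(Y)=\mathcal Q(Y)$ for all $Y\in\mathcal G_\omega$.
\end{itemize}
This is what the Levandosky-type variational theory actually delivers. You should state (iii) in this form rather than claim differentiability at every $\omega$.
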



\section{The fifth-order KdV scaling for the higher-order Boussinesq  system}\label{sec3}
In this section, we present some auxiliary lemmas that will be used to prove the main result of this article. We show that a suitably renormalized family of solitary-wave solutions of the generalized Boussinesq system converges to a nontrivial solitary-wave solution of a generalized fifth-order KdV equation as the wave speed $|\omega|$ approaches $1^-$. 

To this end, for $\epsilon>0$, we choose the wave speed according to
\[
\omega^{2}=1-\epsilon^{\frac{4}{p+1}}.
\]

Now, given a pair $(\psi,v)\in X$, we introduce the KdV scaling
\begin{equation}\label{kdv-s}
\psi(x)=\alpha z(y), \qquad v(x)=\alpha w(y),
\qquad
y=\epsilon^{\frac{1}{p+1}}x,
\end{equation}
where $\alpha>0$ will be chosen appropriately. We see directly that 
\begin{equation*}
J_{\omega}(\psi, v)= \epsilon^{\frac{3}{p+1}} \alpha^2 J^\epsilon(z,w), \ \ \ J^\epsilon(z,w)=  \frac12I^\epsilon(z,w)- K^\epsilon(z,w), 
\end{equation*}
where $I^\epsilon(z,w)$  and $K^\epsilon(z,w)$ are given by
\begin{align*}
I^\epsilon(z,w)&=I^{1,\epsilon}(z,w)+I^{2,\epsilon}(z,w),\\
I^{1,\epsilon}(z,w)&=\int_{\mathbb{R}}\left(\epsilon^{-\frac{4}{p+1}}z^2- c \epsilon^{-\frac{2}{p+1}}(z')^2+ c_2(z'')^{2}+ \epsilon^{-\frac{4}{p+1}}w^2- a \epsilon^{-\frac{2}{p+1}}(w')^2+a_2( w'')^2\right)\,dy, \\
I^{2,\epsilon}(z,w)&=-2 \omega \int_{\mathbb{R}}(\epsilon^{-\frac{4}{p+1}}zw+b\epsilon^{-\frac{2}{p+1}}z' w'+b_2 z'' w'')\,dy,\\
K^{\ep}(z,w)&=\frac{1}{\epsilon^{\frac{4}{p+1}}\alpha^2} \int_{\mathbb R}F\left(\alpha(z, \epsilon^{\frac{1}{p+1}} z', w, \epsilon^{\frac{1}{p+1}} w')\right)\,dy.
\end{align*}
We also define the corresponding  functional $N^{\ep}$ as
\begin{equation*}
N^{\ep}(z,w)=\frac{1}{\epsilon^{\frac{4}{p+1}} \alpha}\int_{\mathbb R}(z, \epsilon^{\frac{1}{p+1}} z', w, \epsilon^{\frac{1}{p+1}} w')\nabla F\left(\alpha(z, \epsilon^{\frac{1}{p+1}} z', w, \epsilon^{\frac{1}{p+1}}w')\right)\,dy.
\end{equation*}
\begin{rem}
We compute  the functionals in some specific cases that correspond to appropriate generalizations of the nonlinearities in Esfahani-Lewandosky's paper \cite{Esfahani-Levandosky-2021}, mainly,
\[
F_1(v)=\frac{|v|^{q+2}}{q+2}+\frac{|v|^{p+2}}{p+2}, \ \ F_2(u, v)=u^{p+1} v, \ \ F_3(v)=|v|^{p+2}+v_x^{2r}v^s,
\]
under the assumption for the third case
\[
p(2-r)=2(2r+s-2).
\]
With $F_4(v, v_x)$, we denote a $(p+2)$-homogeneous function $F_4(v, v_x)$ and with $F_5(v, v_x)$, we denote  a $(p+2)$-homogeneous function such that $F_{{5}}(q, \cdot)$ is $\sigma$-homogeneous  and $F_{{5}}(\cdot, r)$ is $\beta$-homogeneous with $\sigma +\beta=p+2$. 

For cases $F_1$, $F_3$, $F_4$ and $F_5$, we consider $\wt F_j(q, r. s, t)=F_j(q, r)+F_j(s, t)$. For $\alpha=\ep^{\frac{4}{p(p+1)}}$, we have
\begin{align*}
K_1(\psi, v)&=\epsilon^{\frac{3}{p+1}} \alpha^2 \int_{\R} \left(\frac{\ep^{\frac{4(q-p)}{p(p+1)}}(|z|^{q+2}+|w|^{q+2})}{q+2}+\frac{(|z|^{p+2}+|w|^{p+2})}{p+2}\right)dy:=\epsilon^{\frac{3}{p+1}} \alpha^2 K_1^{\ep}(z, w), \\
K_2(\psi, v)&=\epsilon^{\frac{3}{p+1}} \alpha^2 \int_{\R} z^{p+1}wdy:=\epsilon^{\frac{3}{p+1}} \alpha^2 K_2(z, w), \\
K_3(\psi, v)&=\epsilon^{\frac{3}{p+1}} \alpha^2 \int_{\R} \left(|z|^{p+2}+|w|^{p+2}+z_y^{2r}z^s+w_y^{2r}w^s\right)dy:=\epsilon^{\frac{3}{p+1}} \alpha^2 K_3(z, w),\\
K_4(\psi, v)&=\epsilon^{\frac{3}{p+1}} \alpha^2 \int_{\R} \widetilde{ F}_4(z,\epsilon^{\frac{1}{p+1}} z', w, \epsilon^{\frac{1}{p+1}} w')\,dy:=\epsilon^{\frac{3}{p+1}}\alpha^2 K^{\ep}_4(z, w), \\
K_5(\psi, v)&=\epsilon^{\frac{3}{p+1}}  \epsilon^{\frac{\beta}{p+1}} \alpha^2 \int_{\R} \wt F_5(z, z', w, w')\,dy:=\epsilon^{\frac{3}{p+1}}  \epsilon^{\frac{\beta}{p+1}} \alpha^2 K_5(z, w).
\end{align*}
For these $K_j$, we have that
\begin{align*}
N_1(\psi, v)&=\epsilon^{\frac{3}{p+1}} \alpha^2 \int_{\R} \left(\ep^{\frac{4(q-p)}{p(p+1)}}(|z|^{q+2}+|w|^{q+2})+|z|^{p+2}+|w|^{p+2}\right)dy
\\ &:=\epsilon^{\frac{3}{p+2}} \alpha^2 N_1^{\ep}(u, z), \\
N_2(\psi, v)&=(p+2)\epsilon^{\frac{3}{p+1}} \alpha^2 K_2(z, w).\\
N_3(\psi, v)&=\epsilon^{\frac{3}{p+1}} \alpha^2 \int_{\R} \left((p+2)(|z|^{p+2}+|w|^{p+2})+(2r+s)((z')^{2r}z^s+(w')^{2r}w^s)\right)dy\\
&:=\epsilon^{\frac{3}{p+1}} \alpha^2 N_3^{\ep}(z, w),\\
N_4(\psi, v)&=(p+2)\epsilon^{\frac{3}{p+1}} \alpha^2 K_4^{\ep}(z, w)
\\
N_5(\psi, v)&=(p+2)\epsilon^{\frac{3}{p+1}}\epsilon^{\frac{\beta}{p+1}} \alpha^2 K_5(z, w).
\end{align*}  
\end{rem}
From the properties in  Lemma \ref{esf-lev-2021} (generalization of  Esfahani-Lewandosky's paper \cite{Esfahani-Levandosky-2021}), we see that the following limits exist:
\begin{equation}\label{lim-K-N}
\lim_{\ep\to 0^+}K^{\ep}(z^{\ep},w^{\ep}), \ \ \lim_{\ep\to 0^+}N^{\ep}(z^{\ep},w^{\ep}).
\end{equation}

\medskip

Now, for $0<|\omega|<\min\left\{1, \frac{-a}{b}, \frac{-c}{b},\frac{a_2}{b_2}, \frac{c_2}{b_2}\right\}$,  we know that there is a family $\left((\psi_{\omega},v_{\omega})\right)_{\omega}\,$ such that
$$
J_{\omega}(\psi_{\omega},v_{\omega})=S(\omega),  \  \  P_{\omega}(\psi_{\omega},v_{\omega})=0.  \ \
$$
Then, for $\alpha=  \epsilon^{\frac{4}{p(p+1)}}$, if we denote
$$
S^\epsilon:=\inf\left\{J^\epsilon(z,w) \   :    \  (z,w)\in X\,\,\,\,
\text{with} \ \  \  P^{\ep}(z,w)=0 \right\}, \ \ P^{\ep}=  I^{\ep}-N^{\ep},
$$
there is a correspondent family $((z^\epsilon,w^\epsilon))_\epsilon$ such that
\begin{equation}\label{Ic=eIe}
S^\epsilon=J^\epsilon(z^\epsilon,w^\epsilon), \ \  \
P^{\ep}(z^\epsilon,w^\epsilon)=0,
 \ \  \  S(\omega)=\epsilon^{\frac{3p+8}{p(p+1)}}S^\epsilon\,.
\end{equation}
We also have that in the distributional sense, the family  $((z^\epsilon,w^\epsilon))_\epsilon$ is a solution of the system
\begin{equation}\label{trav-ep}
\begin{cases}
-\omega(\ep)\left(
w
-b\epsilon^{\frac{2}{p+1}}w''
+b_2\epsilon^{\frac{4}{p+1}} w^{(iv)}
\right)
+z
+c\epsilon^{\frac{2}{p+1}}z''
+c_2\epsilon^{\frac{4}{p+1}} z^{(iv)}
-\alpha^{-1}G_{j,\ep,\alpha}(z,w)
&=0,
\\
-\omega(\ep)\left(
z
-b\epsilon^{\frac{2}{p+1}}z''
+b_2\epsilon^{\frac{4}{p+1}} z^{(iv)}
\right)
+w
+a\epsilon^{\frac{2}{p+1}}w''
+a_2\epsilon^{\frac{4}{p+1}} w^{(iv)}
-\alpha^{-1}G_{j,\ep,\alpha}(z,w)
&=0,
\end{cases}
\end{equation}
where $G_{j, \ep, \alpha}$ for $j=1, 2$ is given by
\[
G_{j, \ep, \alpha}(z, w)= G_j\left(\alpha\left(z, \epsilon^{\frac{1}{p+1}}z', \epsilon^{\frac{2}{p+1}}z'', w,  \epsilon^{\frac{1}{p+1}} w', \epsilon^{\frac{2}{p+1}}w''\right)\right).
\]
In the case where $F(q, r, s,t)$ is a $(p+2)$-homogeneous function, we see that 
\begin{equation}\label{Gj-H1}
G_{j, \ep, \alpha}(z, w)= \alpha^{p+1}G_j\left(z, \epsilon^{\frac{1}{p+1}}z', \epsilon^{\frac{2}{p+1}}z'', w,  \epsilon^{\frac{1}{p+1}} w', \epsilon^{\frac{2}{p+1}}w''\right).
\end{equation}
Moreover, if $F(q, r, s,t)$ is a $(p+2)$-homogeneous function and homogeneous of degree $\beta$ with respect to the variable $(r, t)$, then we see that
\begin{equation}\label{Gj-H2}
G_{j, \ep, \alpha}(z, w)= \alpha^{p+1} \ep^{\frac{\beta}{p+1}}G_j\left(z, z', z'', w,  w', w''\right).
\end{equation}

We observe that in the case $a+c+2b=0$, or equivalently $\sigma=\frac13$, the family $((z^\epsilon,w^\epsilon))_\epsilon$ is related to solitary-wave solutions for a generalized fifth-order KdV equation as
\[
\ep\to0
\qquad
(|\omega|\to1^{-}).
\]
To see this, let us define on $H^2(\R)$ the functionals
\begin{equation}\label{Je}
\Gamma_{\ep}(w)=J^{\ep}(\omega(\ep)w,w),
\end{equation}
and
\begin{equation}\label{We}
\wp_{\ep}(w)=P^{\ep}(\omega(\ep)w,w).
\end{equation}
We also define
\begin{equation}\label{JJe}
\mathcal J_{\ep}
=
\inf_{w\in H^2(\R)}
\left\{
\Gamma_{\ep}(w):
\wp_{\ep}(w)=0
\right\}.
\end{equation}
Now, if $v\in H^2(\R)$ satisfies $\wp_{\ep}(v)=0,$ then $P^{\ep}(\omega(\ep)v,v)=0,$ and therefore
\[
\Gamma_{\ep}(v)
=
J^{\ep}(\omega(\ep)v,v)
\geq
S^{\ep}
=
\ep^{-\frac{3p+8}{p(p+1)}}S(\omega(\ep)).
\]
Consequently,
\[
\mathcal J_{\ep}
\geq
S^{\ep}
=
\ep^{-\frac{3p+8}{p(p+1)}}S(\omega(\ep))
=
\frac{p}{2(p+2)}
\ep^{-\frac{3p+8}{p(p+1)}}
I_{\omega(\ep)}
(\psi_{\omega(\ep)},v_{\omega(\ep)}).
\]

On the other hand, under the condition $a+c+2b=0$, we define the limiting functionals
\[
\begin{split}
I_0(w)
&=
\int_{\R}
\left(
w^2
+
(c_2-2b_2+a_2)(w'')^2
\right)dx,
\\
K_0(w)
&=
\lim_{\ep\to0^{+}}
K^{\ep}(\omega(\ep)w,w),
\\
N_0(w)
&=
\lim_{\ep\to0^{+}}
N^{\ep}(\omega(\ep)w,w),
\\
\wp_0(w)
&=
I_0(w)-N_0(w),
\\
J_0(w)
&=
\frac12 I_0(w)-K_0(w),
\end{split}
\]
whose existence follows from \eqref{lim-K-N}. We then consider the limiting minimization problem
\begin{equation*}
\mathcal J_0
=
\inf_{w\in H^2(\R)}
\left\{
J_0(w):
\wp_0(w)=0
\right\}.
\end{equation*}
Using the condition $a+c+2b=0$, we conclude that for every $v\in H^2(\R)$,
\begin{align*}
\lim_{\ep\to0^{+}}
\Gamma_{\ep}(v)
&=
\lim_{\ep\to0^{+}}
\Bigg[
\frac12
\int_{\R}
\Big(
v^2
+
((-c-2b)\omega(\ep)^2-a)(v')^2
\\
&\qquad\qquad
+
((c_2-2b_2)\omega(\ep)^2+a_2)(v'')^2
\Big)\,dx
-
K^\ep(\omega(\ep)v,v)
\Bigg]
\\
&=
\frac12
\int_{\R}
\left(
v^2
+
(c_2-2b_2+a_2)(v'')^2
\right)\,dx
-
K_0(v)
\\
&=
J_0(v).
\end{align*}

Now, let $v\in H^2(\R)\setminus\{0\}$ be such that $\wp_0(v)=0$. Then necessarily $N_0(v)>0$. Hence, for $\ep>0$ sufficiently small,
$N^\ep(\omega(\ep)v,v)>0.$ Moreover, for $\beta>0$,
\[
\wp_\ep(\beta v)=P^\ep(\omega(\ep)\beta v,\beta v)=\beta^2
\left(I^\ep(\omega(\ep)v,v)-
N^\ep(\beta\omega(\ep)v,\beta v)\right).
\]
Using the properties of $N$ in Lemma \ref{esf-lev-2021}, the function $\la (\beta)=\wp_\ep(\beta v)$  is positive for $\beta<1$, but small and negative for $\beta>1$, but large. Then, there is $\beta(\ep)>0$ such that 
\[
\wp_\ep(\beta(\ep)v)=0.
\]
Furthermore,
\[
\lim_{\ep\to0^{+}}\frac{\beta^2
I^\ep(\omega(\ep)v,v)}{N^\ep(\beta\omega(\ep)v,\beta v)}=\frac{\beta_0^2
I_0(v)}{N_0(\beta_0 v)}=1, 
\]
where $\beta_0= \lim_{\ep\to0^{+}}\beta(\ep)$. Using the properties of $N_0$ inherited from $N$ in Lemma \ref{esf-lev-2021}, we see that $\beta_0=1$. For $\beta_0>1$, we get
\[
1<\frac{I_0(v)}{\beta^p_0 N_0( v)}=\frac1{\beta_o^p}
\]
since $\wp_0(v)=0$, which is a contradiction. The same argument shows that $\beta_0<1$ is not possible. So $\beta_0=1$. Therefore,
\[
\Gamma_\ep(\beta(\ep)v)
=
J^\ep(\beta(\ep)\omega(\ep)v,\beta(\ep)v)
\geq
S^\ep.
\]
Passing to the limit,
\[
\limsup_{\ep\to0}S^\ep
\leq
J_0(v).
\]
Taking the infimum over all admissible $v$, we conclude that
\begin{equation}\label{est2}
\limsup_{\ep\to0}S^\ep
\leq
\mathcal J_0.
\end{equation}

Finally, for $\ep>0$ sufficiently small,
\begin{equation}\label{est1}
\frac{p\ep^{-\frac{3p+8}{p(p+1)}}}{2(p+2)}
I_{\omega(\ep)}
(\psi_{\omega(\ep)},v_{\omega(\ep)})
\leq
S^\ep
\leq
2\mathcal J_0.
\end{equation}
Now, we are ready to establish the behavior as $\ep \to 0$ of the functional \eqref{We} and the number \eqref{JJe}.
\begin{lem}\label{lim-1}
Assume that $a+c+2b=0.$ Let $(\ep_j)_j$ be a sequence such that $\ep_j\to0^{+}$, and let $\left((z^{\ep_j},w^{\ep_j})\right)_{j\in\mathbb N}$ be a family of minimizers of $J^{\ep_j}$ under the constraint
\[
P^{\ep_j}(z,w)=0.
\]
Then there exists a subsequence (still denoted the same), a sequence $(x_j)_j\subset\R$, and $(z_0,w_0)\in H^2(\R)\times H^2(\R),$
such that the translated sequence
\[
(\tilde z^{\ep_j},\tilde w^{\ep_j})
:=
(z^{\ep_j}(\cdot+x_j),w^{\ep_j}(\cdot+x_j)),
\]
converges strongly in $H^2(\R)\times H^2(\R)$ to $(z_0,w_0)$. Moreover, $z_0=w_0$.
\end{lem}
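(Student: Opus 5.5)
We will prove the lemma with a concentration–compactness argument on the rescaled family, using the upper bound \eqref{est1} as the main a priori estimate.

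\textbf{Step 1: uniform bounds.} Write $\delta_j=\ep_j^{\frac{2}{p+1}}$. In the rescaled variables, the $\omega$-coercivity of Remark \ref{I+} reads
\[
I^{\ep_j}(z,w)=\int_{\R}\Big[\delta_j^{-2}(z-\omega w)^2+\delta_j^{-2}(1-\omega^2)w^2+\delta_j^{-1}\big(-c(z'+\tfrac{\omega b}{c}w')^2+(-a-\tfrac{\omega^2b^2}{-c})(w')^2\big)+\cdots\Big]dy .
\]
Since $1-\omega^2=\delta_j^2$, we have $I^{\ep_j}\ge \int \big(w^2+\delta_j^{-2}(z-\omega w)^2\big)$. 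The condition $a+c+2b=0$ makes the coefficient $-a-\omega^2b^2/(-c)$ equal to $(c+b)^2/(-c)\ge0$ up to $O(\delta_j^2)$. Together with the second-order terms, which stay coercive because $a_2-2b_2+c_2>0$, this yields
\[
\|w^{\ep_j}\|_{H^2}^2+\delta_j^{-2}\|z^{\ep_j}-\omega w^{\ep_j}\|_{L^2}^2+\|z^{\ep_j}\|_{H^2}^2\le C\, I^{\ep_j}(z^{\ep_j},w^{\ep_j}).
\]
By \eqref{est1}, $S^{\ep_j}=\frac{p}{2(p+2)}I^{\ep_j}(z^{\ep_j},w^{\ep_j})\le 2\mathcal J_0$, so the family is bounded in $X$. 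Moreover $\|z^{\ep_j}-w^{\ep_j}\|_{L^2}\le C\delta_j+|1-\omega|\,\|w\|\to0$. This already gives $z_0=w_0$ for any limit.

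\textbf{Step 2: nonvanishing.} On $P^{\ep_j}=0$, Lemma \ref{esf-lev-2021}(vii), adapted to the rescaled $N^\ep$ (the $\ep^{1/(p+1)}$ factors on derivatives only help), gives
\[
I^{\ep_j}=N^{\ep_j}\le C\big(\|\cdot\|^{q_1+2}+\|\cdot\|^{q_2+2}\big).
\]
Since $I^{\ep_j}\ge c\|\cdot\|_X^2$ uniformly, this yields a uniform lower bound $\|(z^{\ep_j},w^{\ep_j})\|_X\ge c_0>0$, and hence $N^{\ep_j}\ge c_0'>0$. Vanishing in the sense of Lions, $\sup_y\int_{y-1}^{y+1}(z^2+w^2)\to0$, would force the $L^r$ norms with $r>2$ to tend to zero. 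Using the bound $|D^2F|\le C(|w|^{q_1}+|w|^{q_2})$, we then get $N^{\ep_j}\to0$. For the derivative dependence we exploit the factor $\ep^{1/(p+1)}$ multiplying $z',w'$, or interpolate with $H^2$ boundedness. This is a contradiction, so there exist $x_j$ with $\int_{x_j-1}^{x_j+1}|w^{\ep_j}|^2\ge\kappa>0$. After translation, $\tilde w^{\ep_j}\rightharpoonup w_0\ne0$ weakly in $H^2$ and strongly in $L^2_{loc}$, and $\tilde z^{\ep_j}\rightharpoonup w_0$ as well.

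\textbf{Step 3: strong convergence (main obstacle).} Lower semicontinuity gives $\liminf \Gamma$-type quantities $\ge$ the limits. Concretely,
\[
\tfrac12 I_0(w_0)\le\liminf \tfrac12 I^{\ep_j}(\tilde z,\tilde w),
\]
and the cross terms $\delta^{-2}(z-\omega w)^2$ are nonnegative and may be dropped. Convergence of $K^{\ep_j}$ and $N^{\ep_j}$ on the strongly locally convergent part comes from \eqref{lim-K-N}, combined with the Brezis–Lieb splitting for $F$. To exclude dichotomy, we write $\tilde w=w_0+r_j$ and split $I$, $N$, $K$ into the contributions of $w_0$ and of $r_j$. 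If $\wp_0(w_0)>0$, then $N_0(w_0)<I_0(w_0)$, and the remainder satisfies $\liminf(I^{\ep_j}(r_j)-N^{\ep_j}(r_j))<0$. Rescaling $r_j$ by a factor $\beta_j<1$ onto the constraint and using $J_{\omega,p}=\frac{p}{2(p+2)}I$-type bounds (Theorem ii, Lemma \ref{esf-lev-2021}(iv–vi)) produces an admissible element with action strictly below $\liminf S^{\ep_j}$. If instead $\wp_0(w_0)<0$, we apply the $\beta(\ep)$ rescaling argument preceding \eqref{est2} to $w_0$, which gives $\limsup S^\ep$ strictly smaller than $\liminf S^{\ep_j}$. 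Both alternatives are contradictions, so $\wp_0(w_0)=0$. Then $J_0(w_0)\ge\mathcal J_0\ge\limsup S^{\ep_j}$ by \eqref{est2}, while lower semicontinuity gives $J_0(w_0)\le\liminf S^{\ep_j}$. Hence $I^{\ep_j}(\tilde z,\tilde w)\to I_0(w_0)$, with no loss of norm. Norm convergence plus weak convergence gives strong $H^2$ convergence of $\tilde w$. The bound from Step 1 then gives the same for $\tilde z$ in $L^2$ and, through the coercive first- and second-order parts, in $H^2$.

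The delicate point is controlling the degenerate $(z',w')$ block, whose coercivity is only of order $\delta_j^{-1}\cdot\delta_j^2$, together with the non-homogeneous nonlinearity in the splitting. We handle both via the subadditivity estimates of Lemma \ref{esf-lev-2021}.
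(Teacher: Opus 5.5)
Your argument is correct in outline, but it differs from the paper's at the decisive step.

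\textbf{Where the routes differ.} The paper applies Lions' trichotomy to the density $\rho_j$ of $I^{\ep_j}$ and disposes of vanishing and dichotomy by citing the fixed-$\omega$ Lemmas 2.5--2.6 of \cite{RC-JM-JQ-2025}. It then gets strong $L^2$ convergence from tightness and tries to upgrade to $H^2$ by weak lower semicontinuity. You exclude vanishing directly, via a uniform lower bound on $N^{\ep_j}$ against Lions' lemma. You replace the dichotomy alternative by a Brezis--Lieb splitting about the weak limit, together with a comparison with the limiting problem $\mathcal J_0$ through \eqref{est2}. The case analysis on the sign of $\wp_0(w_0)$ then forces $\wp_0(w_0)=0$ and $\lim_j S^{\ep_j}=J_0(w_0)$.

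\textbf{What your route buys.} It supplies exactly what the paper's last step is missing. From $I_0(z_0)\le L_0$ one cannot deduce $\limsup_j I^{\ep_j}(\tilde z^{\ep_j}-z_0,\tilde w^{\ep_j}-w_0)\le 0$; what is needed is the reverse inequality $L_0\le I_0(z_0)$, and your comparison with $\mathcal J_0$ provides it. It also makes the dichotomy exclusion genuinely uniform in $\ep$, which the citation leaves implicit. Finally, it yields the content of Lemma \ref{lim-2} along the way, without circularity, because \eqref{est2} is established first.

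\textbf{What it costs.} You need the splitting of $N^{\ep_j}$ and $K^{\ep_j}$ into profile plus remainder up to $o(1)$, uniformly in $\ep$. That amounts to uniform growth bounds for the rescaled nonlinearity $\alpha^{-(p+2)}F(\alpha\,\cdot)$, which is the same tacit hypothesis underlying \eqref{lim-K-N}. You also need $N_0$ and $K_0$ to inherit Lemma \ref{esf-lev-2021}(iv)--(vi).

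\textbf{Details to fix.}

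\emph{Step 1, first-order coefficient.} In fact $-a-\frac{\omega^2b^2}{-c}=-\frac{(c+b)^2}{|c|}+\frac{b^2\delta_j^2}{|c|}$, the opposite sign to what you wrote. So for $\omega$ near $1$ this coefficient is nonnegative only when $c=-b$. Together with $a+c+2b=0$, this (i.e.\ $a=c=-b$) is forced by the admissibility condition $\omega<\min\{-a/b,-c/b\}$ as $\omega\to1$. The coefficient is then $b\delta_j^2$, which is what your closing remark actually uses.

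\emph{Step 1, second-order block.} Uniform coercivity requires $a_2c_2>b_2^2$. This follows from $a_2,c_2\ge b_2$ and $a_2+c_2-2b_2>0$ jointly, not from the latter alone.

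\emph{Step 2, bound on $N^{\ep}$.} This is not a direct rescaling of (vii), because of the prefactor $\ep^{-\frac{4}{p+1}}\alpha^{-2}=\alpha^{-(p+2)}$. Write $N^{\ep}(z,w)=\alpha^{-(p+2)}\int G(\alpha W)$ with $W=(z,\ep^{\frac{1}{p+1}}z',w,\ep^{\frac{1}{p+1}}w')$, and use (ii) to get $N^{\ep}(z,w)\le\int G(W)$.

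\emph{Step 3, splitting the first component.} Take $\tilde z^{\ep_j}=\omega(\ep_j)w_0+s_j$, so that the cross terms carrying $\delta_j^{-2}$ and $\delta_j^{-1}$ vanish identically.

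\emph{Step 3, the case $\wp_0(w_0)>0$.} Close it at fixed $j$. With $J^{\ep}_p=J^{\ep}-\frac{1}{p+2}P^{\ep}$, one has $P^{\ep_j}(s_j,r_j)<0$ for large $j$. Hence $S^{\ep_j}\le J^{\ep_j}_p(s_j,r_j)\le S^{\ep_j}-\frac{p}{2(p+2)}I_0(w_0)+o(1)$.

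\emph{Step 3, the case $\wp_0(w_0)<0$.} The strict inequality comes from the strict monotonicity of $\beta\mapsto J_0(\beta w_0)-\frac{1}{p+2}\wp_0(\beta w_0)$, a consequence of (vi).
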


\begin{proof}
From the previous discussion, we know that
\[
S^{\ep_j}
=
J^{\ep_j}(z^{\ep_j},w^{\ep_j}),
\qquad
P^{\ep_j}(z^{\ep_j},w^{\ep_j})=0,
\]
and
\[
\limsup_{j\to\infty}S^{\ep_j}\leq 2\mathcal J_0.
\]
Moreover, from \eqref{est1},
\[
\frac{p}{2(p+2)}
I^{\ep_j}(z^{\ep_j},w^{\ep_j})
\leq
S^{\ep_j}
\leq 2
\mathcal J_0.
\]
Hence, $\left(
I^{\ep_j}(z^{\ep_j},w^{\ep_j})
\right)_j
$
is uniformly bounded.

We now apply the Lions' concentration--compactness principle to the sequence $\left(
(z^{\ep_j},w^{\ep_j})
\right)_j.$
Using the positivity of $I^{\ep_j}$ (see Remark \ref{I+}), we define the positive measures
\[
d\nu_j=\rho_j(x)\,dx,
\]
where
\begin{equation*}
\begin{split}
\rho_j (z, w)=&  \epsilon_j^{-\frac{4}{p+1}}\left (z-\omega({\ep_j}) w\right )^2+ w^2 + \epsilon_j^{-\frac{2}{p+1}}|c|\left(z' -  \frac{b\omega({\ep_j})}{|c|} w'\right)^2
\\&+ c_2\left(z''  -  \frac{b\omega({\ep_j})}{c_2}w''\right)^2  +\epsilon_j^{-\frac{2}{p+1}}\left(\frac{ac-b^2 \omega^2({\ep_j})}{|c|}\right) (w')^2+\left(\frac{a_2c_2-b_2^2 \omega^2({\ep_j})}{c_2}\right) \left (w''\right )^2\, ,
\end{split}
\end{equation*}
which is the integrand of $I^{\ep_j} \left (z^{\ep_j} , w^{\ep_j} \right )$.

We note that, arguing exactly as in \cite[Lemmas 2.5 and 2.6]{RC-JM-JQ-2025}, the Vanishing and Dichotomy alternatives can be excluded. Therefore, the Compactness alternative holds. Consequently, there exists a subsequence (still denoted the same) and a sequence $(x_j)_j\subset\R$ such that for every $\gamma>0$, there exists $R>0$ satisfying
\[
\int_{B_R(x_j)}d\nu_j
\geq
L_0-\gamma,
\qquad
j=1,2,\dots,
\]
where
\[
L_0
:=
\limsup_{j\to\infty}
I^{\ep_j}(z^{\ep_j},w^{\ep_j}).
\]

Define
\[
\tilde z^{\ep_j}(x)
=
z^{\ep_j}(x+x_j), \qquad \text{and}
\qquad
\tilde w^{\ep_j}(x)
=
w^{\ep_j}(x+x_j).
\]
Then
\begin{equation}\label{B1}
\int_{\R\setminus B_R(0)}
\tilde\rho_j(x)\,dx
\leq
2\gamma,
\end{equation}
where
\[
\tilde\rho_j(x)
=
\rho_j(\tilde z^{\ep_j}(x),\tilde w^{\ep_j}(x)).
\]
Since $\left(
(\tilde z^{\ep_j},\tilde w^{\ep_j})
\right)_j$
is bounded in $H^2(\R)\times H^2(\R),$ the Sobolev embedding theorem ensures that there exist $(z_0,w_0)\in H^2(\R)\times H^2(\R)$ and a subsequence, still denoted by the same index, such that, as $j\to\infty$,
\begin{align*}
(\tilde z^{\ep_j},\tilde w^{\ep_j})
&\rightharpoonup
(z_0,w_0)
\quad
\text{weakly in }
H^2(\R)\times H^2(\R),
\\
(\tilde z^{\ep_j},\tilde w^{\ep_j})
&\to
(z_0,w_0)
\quad
\text{strongly in }
H^1_{\mathrm{loc}}(\R)\times H^1_{\mathrm{loc}}(\R),
\\
(\tilde z^{\ep_j},\tilde w^{\ep_j})
&\to
(z_0,w_0)
\quad
\text{a.e. in }\R^2.
\end{align*}
Using \eqref{B1} and arguing as usual in concentration--compactness theory, we conclude that
\[
(\tilde z^{\ep_j},\tilde w^{\ep_j})
\to
(z_0,w_0)
\quad
\text{strongly in }
L^2(\R)\times L^2(\R).
\]

Moreover, since
\[
1-\omega^2(\ep_j)
= \ep_j^{\frac4{p+1}} , 
\]
and
\[
I^{\ep_j}(z^{\ep_j},w^{\ep_j})
\]
remains uniformly bounded, we deduce that
\[
\|\tilde z^{\ep_j}-\omega(\ep_j)\tilde w^{\ep_j}\|_{L^2}^2
\to0.
\]
Since
\[
\omega(\ep_j)\to1,
\]
it follows that
\[
\tilde z^{\ep_j}-\tilde w^{\ep_j}\to0
\quad
\text{in }L^2(\R),
\]
and therefore
\[
z_0=w_0.
\]

Now, by weak lower semicontinuity,
\[
I_0(z_0)
\leq
\liminf_{j\to\infty}
I^{\ep_j}(\tilde z^{\ep_j},\tilde w^{\ep_j})
=
L_0.
\]
On the other hand, by weak convergence in $H^2(\R)$,
\begin{align*}
&I^{\ep_j}
(\tilde z^{\ep_j}-z_0,\tilde w^{\ep_j}-w_0)
\\
&=
I^{\ep_j}
(\tilde z^{\ep_j},\tilde w^{\ep_j})
-
I^{\ep_j}(z_0,w_0)
+
o(1).
\end{align*}
Passing to the limit and using the previous inequality, we obtain
\[
\limsup_{j\to\infty}
I^{\ep_j}
(\tilde z^{\ep_j}-z_0,\tilde w^{\ep_j}-w_0)
\leq0.
\]
Since the quadratic form is nonnegative,
\[
\lim_{j\to\infty}
I^{\ep_j}
(\tilde z^{\ep_j}-z_0,\tilde w^{\ep_j}-w_0)
=
0.
\]
Finally, using the coercivity of $I^{\ep_j}$ together with the already established $L^2$ convergence, we conclude that
\[
(\tilde z^{\ep_j},\tilde w^{\ep_j})
\to
(z_0,w_0)
\quad
\text{strongly in }
H^2(\R)\times H^2(\R).
\]
This completes the proof.
\end{proof}

The next result establishes the convergence properties of the functionals
defined in \eqref{Je}--\eqref{JJe}.

\begin{lem}\label{lim-2}
Assume that $c+a+2b=0$. Let $\{(z^\ep,w^\ep)\}_\ep$ be a family of minimizers associated with
$S^\ep$, namely,
\[
J^\ep(z^\ep,w^\ep)=S^\ep,
\qquad
P^\ep(z^\ep,w^\ep)=0.
\]
Then, up to translations and subsequences, there exists
$w_0\in H^2(\R)\setminus\{0\}$ such that
\[
z^\ep-w^\ep\to 0
\quad \text{in }L^2(\R),
\]
and
\[
w^\ep\to w_0
\quad \text{strongly in }H^2(\R).
\]
Moreover,
\begin{align*}
\mathcal J_0
=
\lim_{\ep\to0^+}S^\ep
=
\lim_{\ep\to0^+}\Gamma_\ep(w^\ep),
\end{align*}
and
\[
\wp_0(w_0)=0.
\]
\end{lem}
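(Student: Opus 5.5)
We obtain the compactness statement from Lemma \ref{lim-1}, then prove the two inequalities $\liminf_{\ep\to0}S^\ep\geq\mathcal J_0$ and $\limsup_{\ep\to0}S^\ep\leq\mathcal J_0$. To show that the whole family (and not only a sequence) converges, we argue along an arbitrary sequence $\ep_j\to0^+$. Applying Lemma \ref{lim-1} gives a subsequence and translations $x_j$ such that $(\tilde z^{\ep_j},\tilde w^{\ep_j})\to(z_0,w_0)$ strongly in $H^2(\R)\times H^2(\R)$, with $z_0=w_0$. In particular $\tilde w^{\ep_j}\to w_0$ in $H^2$. Moreover, $\|\tilde z^{\ep_j}-\tilde w^{\ep_j}\|_{L^2}\to0$, which follows from the bound $\ep_j^{-4/(p+1)}\|z-\omega w\|_{L^2}^2\leq I^{\ep_j}$ and from $\omega(\ep_j)\to1$.

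\textbf{Nontriviality of $w_0$.} On $\mathcal M$ we have $I^\ep=N^\ep$, and Lemma \ref{esf-lev-2021}(vii) in rescaled form bounds $N^\ep$ by $C(\|(z,w)\|_{H^2}^{q_1+2}+\|(z,w)\|_{H^2}^{q_2+2})$ uniformly in small $\ep$. Since $I^\ep\geq\|w\|_{L^2}^2+c\|w''\|^2$ by Remark \ref{I+}, and the large $\ep^{-4/(p+1)}$ weights only help, this yields a uniform lower bound $\|(z^\ep,w^\ep)\|_{H^2}\geq\delta>0$. Because the convergence is strong and $z^\ep-w^\ep\to0$, it follows that $w_0\neq0$.

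\textbf{Identification of the limit.} We write
\[
I^\ep(z,w)=\ep^{-\frac4{p+1}}\|z-\omega w\|^2+(1-\omega^2)\ep^{-\frac4{p+1}}\|w\|^2+\cdots,
\]
where the prefactor of $\|w\|^2$ equals $1$ by the choice $1-\omega^2=\ep^{4/(p+1)}$. The first-order terms are handled in the same way, using $a+c+2b=0$, which makes the $\ep^{-2/(p+1)}$ weight cancel in the limit. We then expect
\[
\liminf_j I^{\ep_j}(\tilde z^{\ep_j},\tilde w^{\ep_j})\geq I_0(w_0),
\]
where all dropped terms are nonnegative squares. For the nonlinear parts, the strong $H^2$ convergence together with the continuity in \eqref{lim-K-N} gives $K^{\ep_j}(\tilde z,\tilde w)\to K_0(w_0)$ and $N^{\ep_j}\to N_0(w_0)$. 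Passing to the limit in $P^{\ep_j}=0$ therefore gives $\wp_0(w_0)\leq0$. Combining this with $J^\ep=J^\ep-\tfrac{1}{p+2}P^\ep\geq\tfrac{p}{2(p+2)}I^\ep$ and the analogous characterization of $\mathcal J_0$ via $\wp_0\leq0$ (which holds by the homogeneity-type properties (iv)--(vi) of Lemma \ref{esf-lev-2021}), we obtain
\[
\mathcal J_0\leq J_{0,p}(w_0)\leq\liminf S^{\ep_j}.
\]
Together with \eqref{est2}, namely $\limsup S^\ep\leq\mathcal J_0$, this forces equality throughout. Then $\mathcal J_0=\lim S^\ep$, and every inequality used above is an equality. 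In particular $\wp_0(w_0)=0$: if $\wp_0(w_0)<0$, scaling $w_0$ by some $\beta<1$ would strictly decrease $J_{0,p}$ below $\mathcal J_0$, a contradiction.

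\textbf{Convergence of $\Gamma_\ep(w^\ep)$.} This follows from $J^\ep(z^\ep,w^\ep)=S^\ep$ once we show $J^\ep(z^\ep,w^\ep)-J^\ep(\omega w^\ep,w^\ep)\to0$. The quadratic difference involves $\ep^{-4/(p+1)}\|z^\ep-\omega w^\ep\|^2$, and the equality case above shows that these singular square terms tend to zero; equivalently, $\lim I^\ep=I_0(w_0)$. The remaining derivative cross terms are controlled by the same vanishing squares. The nonlinear difference tends to zero by strong convergence.

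\textbf{Main obstacle.} We expect the hardest step to be the lower semicontinuity with singular weights, namely justifying $\liminf I^\ep\geq I_0(w_0)$ and then extracting from the equality that all singular squared terms vanish. This is also where $a+c+2b=0$ is used: it makes the coefficient of $(w')^2$ in $\Gamma_\ep$ tend to $0$, so the derivative terms only contribute $(c_2-2b_2+a_2)(w'')^2$.
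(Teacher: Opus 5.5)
Your proof is correct. It differs from the paper's at the one step that actually needs care.

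\textbf{The paper's route.} After invoking Lemma \ref{lim-1}, the paper gets $\wp_0(w_0)=0$ by simply passing to the limit in $P^{\ep_j}(z^{\ep_j},w^{\ep_j})=0$. It then closes the chain $\mathcal J_0\le J_0(w_0)=\lim\Gamma_{\ep_j}(w^{\ep_j})=\lim S^{\ep_j}$ by asserting $J^{\ep}(\omega w^\ep,w^\ep)-J^\ep(z^\ep,w^\ep)\to0$, on the grounds that $z^\ep-w^\ep\to0$ in $L^2$ and $\omega\to1$. Both steps tacitly require the singular contributions $\ep^{-4/(p+1)}\|z^\ep-\omega w^\ep\|_{L^2}^2$ and $\ep^{-2/(p+1)}\|(z^\ep-\omega w^\ep)'\|_{L^2}^2$ to vanish. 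Strong $H^2$ convergence alone does not give this.

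\textbf{Your route.} You use only that these terms are nonnegative squares.
\begin{itemize}
\item Lower semicontinuity gives $\wp_0(w_0)\le 0$.
\item The relaxed characterization $\mathcal J_0=\inf\{J_{0,p}(w):w\neq0,\ \wp_0(w)\le0\}$, obtained from (iv)--(vi) of Lemma \ref{esf-lev-2021} passed to $N_0,K_0$, gives $\mathcal J_0\le J_{0,p}(w_0)\le\liminf S^{\ep_j}$.
\item Only after squeezing with \eqref{est2} do you recover $\lim I^{\ep_j}=I_0(w_0)$. From this you get $\wp_0(w_0)=0$, the vanishing of the singular squares, and $\Gamma_\ep(w^\ep)-S^\ep\to0$.
\end{itemize}
You also prove $w_0\neq0$ via a uniform lower bound on the constraint set. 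The paper asserts this, but Lemma \ref{lim-1} does not state it. The cost of your route is the extra relaxed-constraint characterization for the limit problem. The gain is that the argument no longer relies on continuity of $I^\ep$ under the singular weights.

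\textbf{One point to make explicit.} You claim that at the first-derivative level ``all dropped terms are nonnegative squares.'' Write $z=\omega w+d$ and use $a+c+2b=0$. The $\ep^{-2/(p+1)}$-part of $I^\ep$ then becomes
$|a|\ep^{2/(p+1)}\|w'\|_{L^2}^2+|c|\ep^{-2/(p+1)}\|d'\|_{L^2}^2+\omega(a-c)\ep^{-2/(p+1)}\int w'd'\,dy$.
After integrating by parts, the cross term is bounded by $|a-c|\,\|w''\|_{L^2}\,(\ep^{-4/(p+1)}\|d\|_{L^2}^2)^{1/2}$. This is bounded but not small, so for $a\neq c$ neither the liminf inequality toward $I_0(w_0)$ nor the vanishing of the singular terms would follow. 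However, the admissible range $\omega<\min\{-a/b,-c/b\}$ together with $a+c+2b=0$ gives $(-a/b)+(-c/b)=2$. So letting $\omega\to1^-$ forces $a=c=-b$, and the cross term disappears. State this, and your decomposition is exactly as you describe.
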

\begin{proof}
From \eqref{est2}, we already know that
\[
\mathcal J_0
\geq
\limsup_{\ep\to0^+}\mathcal J_\ep
\geq
\limsup_{\ep\to0^+}S^\ep .
\]
On the other hand, by Lemma \ref{lim-1}, after extraction of a subsequence
and suitable translations, there exists
$w_0\in H^2(\R)\setminus\{0\}$ such that
\[
(z^{\ep_j},w^{\ep_j})\to (w_0,w_0)
\quad\text{strongly in }H^2(\R)\times H^2(\R).
\]
Since
\[
P^{\ep_j}(z^{\ep_j},w^{\ep_j})=0,
\]
passing to the limit and using the convergence properties of
$I^{\ep_j}$ and $N^{\ep_j}$, we obtain
\[
\wp_0(w_0)=0.
\]
Therefore, by the definition of $\mathcal J_0$,
\[
\mathcal J_0\leq J_0(w_0).
\]
Now, using the strong convergence together with the definitions of
$\Gamma_\ep$, $K_0$, and $J_0$, we infer that
\begin{align*}
J_0(w_0)
&=
\lim_{j\to\infty}
\Gamma_{\ep_j}(w^{\ep_j}) \\
&=
\lim_{j\to\infty}
J^{\ep_j}(\omega(\ep_j)w^{\ep_j},w^{\ep_j}).
\end{align*}
Since
\[
z^{\ep_j}-w^{\ep_j}\to0
\quad\text{in }L^2(\R),
\]
and $\omega(\ep_j)\to1$, we also obtain
\[
J^{\ep_j}(\omega(\ep_j)w^{\ep_j},w^{\ep_j})
-
J^{\ep_j}(z^{\ep_j},w^{\ep_j})
\to0.
\]
Hence,
\[
J_0(w_0)
=
\lim_{j\to\infty}
J^{\ep_j}(z^{\ep_j},w^{\ep_j})
=
\lim_{j\to\infty}S^{\ep_j}.
\]
Combining the previous inequalities yields
\[
\mathcal J_0
\leq
\liminf_{\ep\to0^+}S^\ep
\leq
\limsup_{\ep\to0^+}S^\ep
\leq
\mathcal J_0,
\]
which proves that
\[
\lim_{\ep\to0^+}S^\ep=\mathcal J_0.
\]

Finally, since
\[
\Gamma_\ep(w^\ep)
=
J^\ep(\omega(\ep)w^\ep,w^\ep),
\]
and
\[
J^\ep(\omega(\ep)w^\ep,w^\ep)
-
J^\ep(z^\ep,w^\ep)\to 0,
\]
we conclude thus that
\[
\lim_{\ep\to0^+}\Gamma_\ep(w^\ep)=\mathcal J_0,
\]
and the proof is completed.
\end{proof}

Now, we are in a position to see that a translated subsequence of the renormalized sequence $(w^\ep)$ converges weakly to a $w_0$ that satisfies the system \eqref{trav-eqs}. i.e., $w_0$ is a weak solution of a general fifth KdV-type equation (\ref{5kw}) with $\nu=0$ and $\beta=a_2+c_2-2b_2$.
\begin{lem}\label{conver}
Assume that $c+a+2b=0.$ Let $\{\ep_j\}_j$ be any sequence such that $\ep_j\to0^+$. Then, up to translations and extraction of a subsequence, there exists a nontrivial function $w_0\in H^2(\R)$ such that
\begin{equation*}
(z^{\ep_j},w^{\ep_j})
\to
(w_0,w_0)
\qquad
\text{strongly in }
H^2(\R)\times H^2(\R).
\end{equation*}
Moreover, $w_0$ satisfies, in the distributional sense,
\begin{equation*}
w_0+\left(c_2+a_2-2b_2\right)\partial_x^4 w_0
=
f(w_0,\partial_x w_0,\partial_x^2 w_0),
\end{equation*}
where
\[
f(q,r,z) = H_q(q,r) - rH_{rq}(q,) -zH_{rr}(q,r),
\]
and
\[
K_0(u)=\int_{\R}H(u,u_x)\,dx,
\]
for some functional $H$ satisfying Esfahani-Lewandosky conditions in \cite{Esfahani-Levandosky-2021}. Consequently, $w_0$ is a solitary-wave solution of the limiting generalized fifth-order KdV equation.
\end{lem}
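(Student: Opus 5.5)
The strong convergence part of the statement comes directly from Lemma \ref{lim-1} and Lemma \ref{lim-2}. After a translation and passage to a subsequence, these give $(z^{\ep_j},w^{\ep_j})\to(w_0,w_0)$ in $H^2(\R)\times H^2(\R)$, with $\wp_0(w_0)=0$. Nontriviality will follow from the lower bound $S^{\ep_j}\ge c>0$. One can get this from $\mathcal J_0=\lim S^\ep$ together with the fact that, by Lemma \ref{esf-lev-2021} vii) applied to $N_0$ and the Gagliardo--Nirenberg bound, every nonzero $w$ with $\wp_0(w)=0$ satisfies $I_0(w)\ge \delta>0$. Then $J_0(w)\ge \frac{p}{2(p+2)}I_0(w)\ge c$. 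Since $\wp_0(w_0)=0$ and $\mathcal J_0>0$, we conclude $w_0\ne0$. What remains is to derive the limiting equation.

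To do this I will not pass to the limit in each equation of \eqref{trav-ep} separately. The $O(\ep^{-4/(p+1)})$ and $O(\ep^{-2/(p+1)})$ terms cancel only at leading order, so each equation alone is singular. Instead I will add the two equations. I test \eqref{trav-ep} against $(\phi,\phi)$ with $\phi\in C_c^\infty(\R)$, working in the scaled form where the weak formulation reads $\langle (J^{\ep})'(z^\ep,w^\ep),(\phi,\phi)\rangle=0$. This identity holds because $(z^\ep,w^\ep)$ is a constrained minimizer. The Lagrange multiplier vanishes by the standard argument: the Nehari constraint and $\langle (P^\ep)'(z,w),(z,w)\rangle = -p\,I^\ep + (\text{nonpositive})\ne0$ from Lemma \ref{esf-lev-2021} vi).

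The quadratic part of the sum contributes
\[
\int \Big(\ep^{-\frac4{p+1}}(1-\omega)(z+w)\phi - \ep^{-\frac2{p+1}}\big((c+b\omega) z'+(a+b\omega)w'\big)\phi' + \big((c_2-b_2\omega)z''+(a_2-b_2\omega)w''\big)\phi''\Big).
\]
The key step is to control the singular terms. Since $1-\omega=\ep^{4/(p+1)}/(1+\omega)$, the first term converges to $\int w_0\phi$. For the middle term I write $c+b\omega=(c+b)-b(1-\omega)$ and $a+b\omega=(a+b)-b(1-\omega)$, and use $a+c+2b=0$ to reduce it to
\[
-\ep^{-\frac2{p+1}}(c+b)\int (z'-w')\phi' + O\big(\ep^{\frac2{p+1}}\big).
\]
So I need $\ep^{-2/(p+1)}(z^\ep-w^\ep)\to0$, at least weakly in $H^{-1}$. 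This is the main obstacle. The bound $I^\ep\le C$ only yields $\|z-\omega w\|_{L^2}=O(\ep^{2/(p+1)})$, so the quantity $\ep^{-2/(p+1)}(z-\omega w)=:\zeta^\ep$ is merely bounded in $L^2$.

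To handle this, I will subtract the two equations of \eqref{trav-ep}. The dominant part gives $\ep^{-4/(p+1)}(1+\omega)(z-w)=\ep^{-2/(p+1)}\cdot O_{H^{-2}}(1)$ plus bounded terms. This forces $\zeta^\ep$ to be $O(\ep^{2/(p+1)})$ in $H^{-2}$, up to the nonlinear difference $\alpha^{-1}(G_{1}-G_{2})$. That difference is bounded by Lemma \ref{esf-lev-2021} iii) and the choice $\alpha=\ep^{4/(p(p+1))}$. Consequently $\ep^{-2/(p+1)}\int(z'-w')\phi'\to0$. The same rewriting, together with the strong $H^2$ convergence, makes the last term tend to $(c_2+a_2-2b_2)\int w_0''\phi''$.

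For the nonlinear part, $\alpha^{-1}\langle (G_{1,\ep,\alpha}+G_{2,\ep,\alpha}),\phi\rangle$ equals $\langle (K^\ep)'(z^\ep,w^\ep),(\phi,\phi)\rangle$. By the growth bounds of Lemma \ref{esf-lev-2021}, the strong $H^2$ convergence, and the existence of the limits \eqref{lim-K-N}, this converges to $\langle K_0'(w_0),\phi\rangle$. In the homogeneous cases \eqref{Gj-H1}--\eqref{Gj-H2}, $K_0(u)=\int H(u,u_x)$ with $H$ explicit: $H(q,r)=F(q,0,q,0)$ or $F(q,r,q,r)$, suitably weighted. Computing the Euler--Lagrange derivative of $\int H(u,u_x)$ gives exactly $f(q,r,z)=H_q-rH_{rq}-zH_{rr}$ in the weak sense. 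The conditions of \cite{Esfahani-Levandosky-2021} on $H$ are inherited from assumptions (a)--(c) on $F$ by restriction to the diagonal. Dividing the resulting identity by the factor $2$ coming from the sum yields the stated equation for $w_0$.
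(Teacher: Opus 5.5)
\textbf{The gap.} Your route differs from the paper's, but the step meant to kill the singular middle term is wrong. From the difference of the two equations you correctly get $\ep^{-\frac4{p+1}}(1+\omega)(z^\ep-w^\ep)=\ep^{-\frac2{p+1}}O_{H^{-2}}(1)+O_{H^{-2}}(1)$. This only yields $z^\ep-w^\ep=O(\ep^{\frac2{p+1}})$, i.e.\ $\zeta^\ep=O(1)$, which is no better than what $I^\ep\le C$ already gave; you have confused the size of $z^\ep-w^\ep$ with that of $\zeta^\ep$.

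The $O(1)$ bound is sharp. Multiply the difference by $\ep^{\frac2{p+1}}$ and use the strong $H^2$ convergence:
\[
(1+\omega)\,\ep^{-\frac{2}{p+1}}(z^\ep-w^\ep)=-(c-\omega b)(z^\ep)''+(a-\omega b)(w^\ep)''+O_{H^{-2}}\big(\ep^{\frac{2}{p+1}}\big)\longrightarrow (a-c)\,w_0'' .
\]
Hence $\ep^{-\frac2{p+1}}(z^\ep-w^\ep)\to\frac{a-c}{2}w_0''$ in $H^{-2}$. Since $a+c+2b=0$ gives $c+b=\frac{c-a}{2}$, your middle term converges to $-\frac{(a-c)^2}{4}\int w_0''\phi''$, not to $0$. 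The summed identity therefore actually yields
\[
w_0+\Big(c_2+a_2-2b_2-\tfrac{(a-c)^2}{4}\Big)w_0^{(iv)}=f(w_0,w_0',w_0'').
\]
This agrees with the linear dispersion relation. In $(1-ck^2+c_2k^4)(1-ak^2+a_2k^4)-\omega^2(1+bk^2+b_2k^4)^2$ at $\omega=1$, the $k^2$ term cancels and the $k^4$ coefficient is $a_2+c_2-2b_2-(b^2-ac)$, with $b^2-ac=\frac{(a-c)^2}{4}$. So whenever $c+b\neq0$, your argument asserts a cancellation that does not happen.

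\textbf{The repair, and comparison with the paper.} In fact $c+b\neq0$ cannot occur in the setting your argument relies on. Lemma~\ref{lim-1}, which you invoke, needs $\rho_j\ge0$, i.e.\ $ac\ge b^2\omega(\ep_j)^2$ with $\omega(\ep_j)\to1$. But $ac\le\frac{(a+c)^2}{4}=b^2$, with equality only if $a=c$. Likewise, $\min\{-a/b,-c/b\}$ reaches $1$ only if $a=c$. Hence $a=c=-b$, so $c+b=a+b=0$, the middle term vanishes identically, and no difference equation is needed. With this change the rest of your limit passage is sound. (The summed identity is already the stated equation; there is no factor $2$ to divide out, since $K_0(w)=\lim K^\ep(\omega w,w)$ already collects both components.)

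The repaired argument is a genuinely different proof from the paper's. The paper never passes to the limit in the singular system. It takes $\wp_0(w_0)=0$ and $J_0(w_0)=\mathcal J_0$ from Lemma~\ref{lim-2}, so $w_0$ minimizes $J_0$ on $\{\wp_0=0\}$, and it kills the Lagrange multiplier with $\langle\wp_0'(w_0),w_0\rangle\le-pI_0(w_0)<0$. That route is shorter but rests on identifying the limiting minimization problem; yours uses only the $\ep$-equations and strong convergence.

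The paper's route also depends on the same cancellation, hidden in Lemma~\ref{lim-2}. There the claim $J^\ep(\omega w^\ep,w^\ep)-J^\ep(z^\ep,w^\ep)\to0$ is justified only by $L^2$ convergence of $z^\ep-w^\ep$. The difference, however, contains $\ep^{-\frac4{p+1}}\|z^\ep-\omega w^\ep\|_{L^2}^2$, which tends to zero precisely because $a=c$.
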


\begin{proof}
The convergence statement follows directly from Lemma \ref{lim-1}.
In particular,
\[
(z^{\ep_j},w^{\ep_j})
\to
(w_0,w_0)
\quad
\text{strongly in }
H^2(\R)\times H^2(\R),
\]
with $w_0\neq0$. Moreover, by Lemma \ref{lim-2},
\[
\wp_0(w_0)=0,
\qquad
J_0(w_0)=\mathcal J_0.
\]
Therefore, $w_0$ is a minimizer of $J_0$ constrained to the manifold
\[
\mathcal N_0
=
\{w\in H^2(\R)\setminus\{0\}:\wp_0(w)=0\}.
\]
By the Lagrange multiplier principle, there exists $\lambda\in\R$
such that
\[
J_0'(w_0)=\lambda\,\wp_0'(w_0).
\]
Now, using the homogeneity assumptions on the nonlinearity, we obtain
\[
\langle N_0'(w_0),w_0\rangle
\geq
(p+2)N_0(w_0).
\]
Since
\[
\wp_0(w_0)=I_0(w_0)-N_0(w_0)=0,
\]
it follows that
\begin{align*}
\langle \wp_0'(w_0),w_0\rangle
&=
2I_0(w_0)-\langle N_0'(w_0),w_0\rangle \\
&\leq
2I_0(w_0)-(p+2)N_0(w_0) \\
&=
-p\,I_0(w_0)<0.
\end{align*}
Hence, $\lambda=0$, and therefore $J_0'(w_0)=0$. 

Finally, computing the Euler--Lagrange equation associated with $J_0$, we conclude that $w_0$ satisfies
\[
w_0+\left(c_2+a_2-2b_2\right)\partial_x^4 w_0
=
f(w_0,\partial_x w_0,\partial_x^2 w_0),
\]
in the distributional sense, giving the proof.
\end{proof}

We end this section with a technical result that will be clever in the stability analysis in the homogeneous case. 
\begin{lem}\label{lem:H2-bound}
Assume that $a+c+2b=0$ and that $F(q, r, s, t)$ is a homogeneous function of degree $(p+2)$. Let $(z^{\ep}, w^{\ep}) \in H^2(\mathbb{R}) \times H^2(\mathbb{R})$ be the family of solutions to the scaled travelling-wave system \eqref{trav-ep} that strongly converges  to $(z_0, w_0)$ as $\epsilon \to 0^+$. Define the $\epsilon$-dependent energy norm $\|\cdot\|_{\mathcal{H}_{2, \epsilon}}$ on $H^2(\mathbb{R}) \times H^2(\mathbb{R})$ by
\[
\|(\phi, \psi)\|_{\mathcal{H}_{2. \epsilon}}^2 = \|\phi\|_{L^2}^2 + \|\psi\|_{L^2}^2 + \epsilon^{\frac{2}{p+1}}\left(\|\phi'\|_{L^2}^2 + \|\psi'\|_{L^2}^2\right) + \epsilon^{\frac{4}{p+1}}\left(\|\phi''\|_{L^2}^2 + \|\psi''\|_{L^2}^2\right).
\]
Then, there exists a constant $M > 0$, independent of $\epsilon$, such that, for all $\epsilon > 0$ sufficiently small,
\[
\epsilon^{\frac{p-3}{p+1}} \|(\partial_\epsilon w^\ep, \partial_\epsilon z^\ep)\|_{\mathcal{H}_{2,\epsilon}} \le M.
\]
\end{lem}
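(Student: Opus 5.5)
We plan to differentiate the scaled travelling-wave system \eqref{trav-ep} with respect to $\ep$ and regard the resulting identity as a linear equation for $(\dot z,\dot w):=(\partial_\ep z^\ep,\partial_\ep w^\ep)$. In the homogeneous case, \eqref{Gj-H1} with $\alpha=\ep^{\frac{4}{p(p+1)}}$ gives $\alpha^{-1}G_{j,\ep,\alpha}=\alpha^{p}G_j(\cdots)=\ep^{\frac{4}{p+1}}G_j(z,\ep^{\frac1{p+1}}z',\dots)$. After multiplying \eqref{trav-ep} by $\ep^{-\frac4{p+1}}$, the system takes the form $\mathcal L_\ep(z,w)=\mathcal G_\ep(z,w)$. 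Here $\mathcal L_\ep$ is the linear operator whose quadratic form is $I^\ep$ and whose symbol is the $2\times2$ matrix built from $\ep^{-\frac4{p+1}}(1,-\omega)$, $\ep^{-\frac2{p+1}}(-c,-a,b\omega)$ and $(c_2,a_2,-b_2\omega)$. Differentiating in $\ep$ yields
\[
\mathcal L_\ep(\dot z,\dot w)-\mathcal G_\ep'(z^\ep,w^\ep)(\dot z,\dot w)=-(\partial_\ep\mathcal L_\ep)(z^\ep,w^\ep)+(\partial_\ep\mathcal G_\ep)(z^\ep,w^\ep)=:R_\ep .
\]
The right-hand side contains only $\ep$-derivatives of coefficients: $\omega'(\ep)\sim -\tfrac{2}{p+1}\ep^{\frac{4}{p+1}-1}$, the powers $\ep^{-\frac{k}{p+1}}$, and the explicit $\ep^{\frac1{p+1}}$ factors inside $G_j$. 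Using the strong $H^2$ convergence from Lemma \ref{conver}, we will bound $R_\ep$ in the dual of the $\mathcal H_{2,\ep}$ norm. We also need that $z^\ep$ and $w^\ep$ are smoother, say in $H^4$ uniformly, which follows from bootstrapping the elliptic equation. The terms are of size $\ep^{-1}\ep^{-\frac{4}{p+1}}\|z^\ep-\omega w^\ep\|$ and similar. Since $I^\ep$ is bounded, $\|z^\ep-\omega w^\ep\|_{L^2}\lesssim \ep^{\frac2{p+1}}$, which should give $\|R_\ep\|\lesssim \ep^{-1-\frac{2}{p+1}}\cdot\ep^{\,\cdot}$. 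The exponents are then bookkept so that, after weighting, one obtains exactly $\ep^{-\frac{p-3}{p+1}}$. Note that $\frac{p-3}{p+1}=1-\frac{4}{p+1}$, which is the power produced by $\omega'(\ep)$ against the leading $\ep^{-\frac4{p+1}}$ coefficient.

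The second step is a uniform coercivity estimate for the linearized operator $\mathcal A_\ep:=\mathcal L_\ep-\mathcal G_\ep'(z^\ep,w^\ep)$ in the norm $\mathcal H_{2,\ep}$, after rescaling so that the $\ep$-weights match. By Remark \ref{I+} and \eqref{IG1}, $\langle\mathcal L_\ep u,u\rangle$ controls $\ep^{-\frac4{p+1}}\|u\|^2_{\mathcal H_{2,\ep}}$ up to the degenerate direction $z\approx w$, where only $L^2+\dot H^2$ control of order one survives. That direction is exactly where the limiting fifth-order KdV operator $L_0=1+(c_2+a_2-2b_2)\partial^4-f'(w_0)$ lives. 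We will decompose $(\dot z,\dot w)=(\phi+\chi,\phi-\chi)$. The $\chi$ component is controlled directly by the large coercive part. For the $\phi$ component we argue by contradiction: if the bound fails, we normalize and pass to the limit using Lemma \ref{conver}. This gives a nonzero $\phi_0$ with $L_0\phi_0$ equal to the limit of the normalized right-hand sides. We must then exclude the kernel of $L_0$, which contains $w_0'$, and possibly the negative direction.

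The main obstacle is precisely this invertibility: $L_0$ has the translation kernel $w_0'$ and is not uniformly invertible. We will first remove the kernel by working in the even-function subspace. The minimizers can be chosen even, or translations can be fixed so that $z^\ep,w^\ep$ are even; then $\dot z,\dot w$ are even, while $w_0'$ is odd. We additionally assume nondegeneracy of $w_0$, namely $\ker L_0=\mathrm{span}\{w_0'\}$, as in Levandosky's fifth-order KdV analysis (for the homogeneous case this is the standard input we would cite). With these in place the contradiction argument closes: the normalized limit solves $L_0\phi_0=0$ with $\phi_0$ even and nonzero, which is impossible. Combining this with the bound on $R_\ep$ yields $\ep^{\frac{p-3}{p+1}}\|(\partial_\ep w^\ep,\partial_\ep z^\ep)\|_{\mathcal H_{2,\ep}}\le M$. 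Differentiability of $\ep\mapsto(z^\ep,w^\ep)$ itself would be obtained from the same invertibility via the implicit function theorem applied to the map $(\ep,z,w)\mapsto \mathcal L_\ep(z,w)-\mathcal G_\ep(z,w)$.
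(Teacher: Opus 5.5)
The step that fails is the one you pass over with ``the exponents are then bookkept''. No bookkeeping can turn your residual into the rate $\epsilon^{\frac{3-p}{p+1}}$, because that rate is false whenever $a\neq c$. Put $\mu=\epsilon^{\frac{2}{p+1}}$. Your residual estimate, of size $\epsilon^{-1-\frac{2}{p+1}}$ in the $\epsilon^{-\frac{4}{p+1}}$-normalized system, is the right order. The cancellation coming from $a+c+2b=0$ occurs only in the \emph{sum} of the two equations; the difference of the two residuals is $\simeq\frac{2}{p+1}\epsilon^{-1-\frac{2}{p+1}}(a-c)\,w_0''$. Dividing by the transverse coercivity constant $\epsilon^{-\frac{4}{p+1}}$ gives $\partial_\epsilon(z^\epsilon-w^\epsilon)$ of exact order $\epsilon^{\frac{1-p}{p+1}}$, which is $\epsilon^{-\frac{2}{p+1}}$ times larger than the target.

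You can check directly that the claimed bound is impossible. If $\|\partial_\epsilon(z^\epsilon,w^\epsilon)\|_{L^2}\le M\epsilon^{\frac{3-p}{p+1}}$, integrating from $0$ (recall $z_0=w_0$) gives $\|z^\epsilon-w_0\|_{L^2}+\|w^\epsilon-w_0\|_{L^2}\le C\mu^2$. Subtracting the two equations of \eqref{trav-ep} in the homogeneous case gives
\[
(1+\omega)(z-w)+\mu\bigl[(c-b\omega)z''+(b\omega-a)w''\bigr]+\mu^2\bigl[(c_2+b_2\omega)z^{(iv)}-(a_2+b_2\omega)w^{(iv)}-G_1+G_2\bigr]=0 .
\]
Testing with $\varphi\in C_c^\infty(\R)$, dividing by $\mu$ and letting $\epsilon\to0$ yields $(c-a)\int w_0\varphi''\,dy=0$. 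Hence $w_0=0$, contradicting the nontriviality of $w_0$. If $a=c=-b$, the sum of the equations contains $-\frac{b\mu}{1+\omega}(z+w)''$, and the same argument gives $b\,w_0''=0$, at least for nonlinearities without derivatives. The natural parameter is $\mu$: $\partial_\epsilon=\frac{2}{p+1}\epsilon^{\frac{1-p}{p+1}}\partial_\mu$, and $\partial_\mu(z^\epsilon,w^\epsilon)$ is generically of order one.

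The paper's proof reaches the stated rate only in two ways. First, it declares $\|R_1\|_{L^2}+\|R_2\|_{L^2}\lesssim\epsilon^{\frac{3-p}{p+1}}$, although its own $R_1$ contains $-\frac{2}{p+1}\epsilon^{\frac{1-p}{p+1}}\bigl(b\omega(w^\epsilon)''+c(z^\epsilon)''\bigr)$, which is of order $\epsilon^{\frac{1-p}{p+1}}$ when $b+c\neq0$. Second, it asserts the coercive bound \eqref{est:coercive}, which is not justified, since the linearization at a ground state has a negative direction and the translation kernel. You were right that invertibility, not coercivity, is the real issue, though your nondegeneracy hypothesis is an extra assumption not in the statement.

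What your scheme can realistically deliver is $\epsilon^{\frac{p-1}{p+1}}\|(\partial_\epsilon w^\epsilon,\partial_\epsilon z^\epsilon)\|_{\mathcal H_{2,\epsilon}}\le M$. With the powers of $\epsilon$ in \eqref{eq:key2} combined correctly, the sign argument of Lemma \ref{lem48} only needs $\epsilon A'(\epsilon)\to0$, which this weaker bound supplies.

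Even for that bound one more correction is needed. When $a\neq c$, the coupling $\epsilon^{-\frac{2}{p+1}}(c-a)\chi''$ between your $\phi$ and $\chi$ components is not lower order. Eliminating $\chi$ shifts the fourth-order coefficient of the limiting operator from $c_2+a_2-2b_2$ to $c_2+a_2-2b_2-\frac{(c-a)^2}{4}$, and $w_0$ itself solves the equation with this modified coefficient. So the operator whose nondegeneracy you need is not the $L_0$ of Lemma \ref{conver}.
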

\begin{proof}
We begin by introducing the shorthand notation $\delta = \frac{2}{p+1}$ for the exponents, so that $\epsilon^{\frac{2}{p+1}} = \epsilon^\delta$ and $\epsilon^{\frac{4}{p+1}} = \epsilon^{2\delta}$. Differentiating the relation $\omega^2(\epsilon) = 1 - \epsilon^{2\delta}$ with respect to $\epsilon$ yields
\[
2\omega(\epsilon)\omega'(\epsilon) = -2\delta \epsilon^{2\delta-1} \implies \omega'(\epsilon) = -\frac{\delta\epsilon^{2\delta-1}}{\omega(\epsilon)}  = O\left(\epsilon^{\frac{3-p}{p+1}}\right).
\]
Now, consider the scaled travelling-wave system written in component form
\begin{equation}\label{sys:scaled}
\begin{cases}
-\omega(\epsilon)\left( w - b\epsilon^\delta w'' + b_2\epsilon^{2\delta} w^{(iv)} \right) + z + c\epsilon^\delta z'' + c_2\epsilon^{2\delta} z^{(iv)} - \ep^{\frac{4}{p+1}}G_{1}(\eta^\ep) = 0, \\
-\omega(\epsilon)\left( z - b\epsilon^\delta z'' + b_2\epsilon^{2\delta} z^{(iv)} \right) + w + a\epsilon^\delta w'' + a_2\epsilon^{2\delta} w^{(iv)} -  \ep^{\frac{4}{p+1}}G_{2}(\eta^\ep) = 0.
\end{cases}
\end{equation}
where we use (\ref{Gj-H1}) and $(\eta^\ep)^T=(z^\ep, \epsilon^{\frac{\delta}2}(z^\ep)', \epsilon^{\delta}(z^\ep)'', w^\ep, \epsilon^{\frac{\delta}2}(w^\ep)', \epsilon^{\delta}(w^\ep)'')$.
Applying the differential operator $\partial_\epsilon$ to both equations in \eqref{sys:scaled}, using the product and chain rules,  we obtain the linearized system. This system can be compactly written in matrix form as
\begin{equation}\label{sys:matrix_linearized}
\mathcal{L}_\epsilon \begin{pmatrix} \partial_\epsilon  w^\epsilon \\ \partial_\epsilon z^\epsilon \end{pmatrix} -\ep^{\frac{4}{p+1}} \nabla G(\eta^\ep)\cdot\theta^\ep= \begin{pmatrix} R_1 \\ R_2 \end{pmatrix},
\end{equation}
where $(\theta^\ep)^T=(\partial_{\ep}z^\ep, \epsilon^{\frac{\delta}2}(\partial_{\ep}z^\ep)', \epsilon^{\delta}(\partial_{\ep} z^\ep)'', \partial_{\ep}w^\ep, \epsilon^{\frac{\delta}2}(\partial_{\ep}w^\ep)', \epsilon^{\delta}(\partial_{\ep}w^\ep)'')$ and the matrix differential operator $\mathcal{L}_\epsilon = (\mathcal{L}_{\epsilon, ij})_{1 \le i,j \le 2}$ acts on the derivative vector and is explicitly defined by its components
\begin{align*}
\mathcal{L}_{\epsilon, 11} &= -\omega(\epsilon)\left(1 - b\epsilon^\delta \partial_y^2 + b_2\epsilon^{2\delta} \partial_y^4\right), \\
\mathcal{L}_{\epsilon, 12} &= \left(1 + c\epsilon^\delta \partial_y^2 + c_2\epsilon^{2\delta} \partial_y^4\right) , \\
\mathcal{L}_{\epsilon, 21} &= \left(1 + a\epsilon^\delta \partial_y^2 + a_2\epsilon^{2\delta} \partial_y^4\right), \\
\mathcal{L}_{\epsilon, 22} &= -\omega(\epsilon)\left(1 - b\epsilon^\delta \partial_y^2 + b_2\epsilon^{2\delta} \partial_y^4\right).
\end{align*}
The non-homogeneous source terms $R_1$ and $R_2$ collect all terms resulting from the explicit differentiation of the $\epsilon$-dependent coefficients and the wave speed $\omega(\epsilon)$
\begin{equation*}
\begin{split}
R_1 =& \omega'(\ep) \left( w^{\ep} - b\epsilon^\delta (w^{\ep})'' + b_2\epsilon^{2\delta} (w^{\ep})^{(iv)} \right) + \omega(\ep)  \left(-\delta b \epsilon^{\delta-1} (w^{\ep})'' + 2\delta b_2 \epsilon^{2\delta-1} (w^{\ep})^{(iv)}\right) \\&- \left(\delta c \epsilon^{\delta-1} (z^{\ep})'' + 2\delta c_2 \epsilon^{2\delta-1} (z^{\ep})^{(iv)}\right) +\ep^{\frac{3-p}{p+1}}\left(\frac{4}{p+1} G_1(\eta^\ep) + \ep \nabla G_1(\eta^\ep)\cdot\sigma^\ep\right),\\
R_2 =& \omega'(\ep)  \left( z^{\ep} - b\epsilon^\delta (z^{\ep})'' + b_2\epsilon^{2\delta} (z^{\ep})^{(iv)} \right) + \omega(\ep)  \left(-\delta b \epsilon^{\delta-1} (z^{\ep})'' + 2\delta b_2 \epsilon^{2\delta-1} (z^{\ep})^{(iv)}\right) \\&- \left(\delta a \epsilon^{\delta-1}(w^{\ep})'' + 2\delta a_2 \epsilon^{2\delta-1} (w^{\ep})^{(iv)}\right) + +\ep^{\frac{3-p}{p+1}}\left(\frac{4}{p+1} G_2(\eta^\ep) + \ep \nabla G_2(\eta^\ep)\cdot\sigma^\ep\right),
\end{split}
\end{equation*}
where $(\sigma^\ep)^T=(0, \epsilon^{\frac{\delta}2}(z^\ep)', \epsilon^{\delta}(z^\ep)'', 0, \epsilon^{\frac{\delta}2}(w^\ep)', \epsilon^{\delta}(w^\ep)'')$. 
Given that $(z^\ep, w^\ep)$ is uniformly bounded in $\mathcal H_{2, \ep}$ for small $\epsilon$, the leading order behavior of these residuals is dominated by the terms carrying the lowest power of $\epsilon$, which corresponds precisely to $\epsilon^{2\delta-1} = \epsilon^{\frac{3-p}{p+1}}$. Consequently, we have the uniform $L^2$-estimate:
\begin{equation}\label{est:R_bound}
\|R_1\|_{L^2} + \|R_2\|_{L^2} \le C_1 \epsilon^{\frac{3-p}{p+1}},
\end{equation}
where $C_1 > 0$ is independent of $\epsilon$.

To derive the energy estimate, we test the matrix system \eqref{sys:matrix_linearized} against the vector of derivatives $(\partial_\ep w^\epsilon, \partial_\ep z^\epsilon)^T$ via the $L^2(\mathbb{R}) \times L^2(\mathbb{R})$ inner product
\begin{equation}\label{eq:tested}
\left\langle \mathcal{L}_\epsilon \begin{pmatrix} \partial_\epsilon  w^\epsilon \\ \partial_\epsilon  z^\epsilon \end{pmatrix}, \begin{pmatrix} \partial_\epsilon  w^\epsilon \\\partial_\epsilon  z^\epsilon\end{pmatrix} \right\rangle_{L^2 \times L^2} = \langle R_1,\partial_\epsilon  w^\epsilon \rangle + \langle R_2, \partial_\epsilon  z^\epsilon \rangle.
\end{equation}
Expanding the left-hand side of \eqref{eq:tested}, we integrate by parts the higher-order spatial derivatives inside the diagonal operators $\mathcal{L}_{\epsilon, 11}$ and $\mathcal{L}_{\epsilon, 22}$. Exploiting the self-adjointness and coercivity of the fourth-order linear elliptic blocks yields:
\[
-\omega\langle (1 - b\epsilon^\delta \partial_y^2 + b_2\epsilon^{2\delta} \partial_y^4)\partial_\epsilon  w^\epsilon, \partial_\epsilon  w^\epsilon \rangle = -\omega \left( \|\partial_\epsilon  w^\epsilon\|_{L^2}^2 + b\epsilon^\delta \|(\partial_\epsilon  w^\epsilon)'\|_{L^2}^2 + b_2 \epsilon^{2\delta} \|(\partial_\epsilon  w^\epsilon)''\|_{L^2}^2 \right).
\]
The off-diagonal coupling terms $\mathcal{L}_{\epsilon, 12}, \mathcal{L}_{\epsilon, 21}$ and the localized linearized nonlinearity potentials containing $\nabla G(\eta^{\ep})$ contribute lower-order terms that are strictly controlled by standard Sobolev embeddings due to the homogeneity of $F$. Thus, for $\epsilon$ sufficiently small, the left-hand side of \eqref{eq:tested} matches the structure of the $\epsilon$-dependent energy norm and satisfies the coercive lower bound
\begin{equation}\label{est:coercive}
\left\langle \mathcal{L}_\epsilon \begin{pmatrix} \partial_\epsilon  w^\epsilon \\ \partial_\epsilon  z^\epsilon \end{pmatrix}-\ep^{\frac{4}{p+1}} \nabla G(\eta^\ep)\cdot\theta^\ep, \begin{pmatrix}\partial_\epsilon  w^\epsilon \\ \partial_\epsilon  z^\epsilon \end{pmatrix} \right\rangle_{L^2 \times L^2} \ge C_2 \|(\partial_\epsilon  w^\epsilon, \partial_\epsilon  z^\epsilon)\|_{\mathcal{H}_{2,\epsilon}}^2.
\end{equation}
For the right-hand side of \eqref{eq:tested}, we apply the Cauchy-Schwarz inequality followed by Young's inequality with $\gamma > 0$
\begin{align*}
\left| \langle R_1, \partial_\epsilon  w^\epsilon \rangle + \langle R_2, \partial_\epsilon  z^\epsilon\rangle \right| &\le \left( \|R_1\|_{L^2} + \|R_2\|_{L^2} \right) \|(\partial_\epsilon  w^\epsilon, \partial_\epsilon  z^\epsilon)\|_{L^2} \\
&\le \frac{1}{4\gamma}\left( \|R_1\|_{L^2} + \|R_2\|_{L^2} \right)^2 + \gamma\|(\partial_\epsilon  w^\epsilon, \partial_\epsilon  z^\epsilon)\|_{L^2}^2.
\end{align*}
Since $\|(\partial_\epsilon  w^\epsilon, \partial_\epsilon  z^\epsilon)\|_{L^2}^2 \le \|(\partial_\epsilon  w^\epsilon, \partial_\epsilon  z^\epsilon)\|_{\mathcal{H}_{2,\epsilon}}^2$, choosing $\gamma$ sufficiently small allows us to absorb the term $\gamma \|(\partial_\epsilon  w^\epsilon, \partial_\epsilon  z^\epsilon)\|_{L^2}^2$ directly into the left-hand side energy functional \eqref{est:coercive} without losing $\epsilon$-powers. Combining \eqref{est:R_bound} and \eqref{est:coercive}, we deduce
\[
\|(\partial_\epsilon  w^\epsilon, \partial_\epsilon  z^\epsilon)\|_{\mathcal{H}_{2,\epsilon}}^2 \le C_3 \left( \epsilon^{\frac{3-p}{p+1}} \right)^2.
\]
Taking the square root of both sides yields
\[
\|(\partial_\epsilon  w^\epsilon, \partial_\epsilon  z^\epsilon)\|_{\mathcal{H}_{2,\epsilon}} \le C_4 \epsilon^{\frac{3-p}{p+1}}.
\]
Finally, multiplying both sides by the target weight $\epsilon^{\frac{p-3}{p+1}}$, we obtain
\[
\epsilon^{\frac{p-3}{p+1}} \|(\partial_\epsilon  w^\epsilon, \partial_\epsilon  z^\epsilon)\|_{\mathcal{H}_{2,\epsilon}} \le C_4 \epsilon^{\frac{3-p}{p+1}} \epsilon^{\frac{p-3}{p+1}} = C_4,
\]
which is uniformly bounded as $\epsilon \to 0^+$, concluding the proof.
\end{proof}

\section{Stability}\label{sec4}

In this section, we discuss the global well-posedness briefly and derive bounds for $S$ and $S'$ to establish the positivity of $S''(\omega)$ for values of $|\omega|$ sufficiently close to $1^-$.  Before we go further, we state the definition of orbital stability of the set of ground states $\mathcal G_{\omega}$, 

\begin{defn}[Orbital stability]
We say that the ground state set $\mathcal G_{\omega}$ is orbitally stable for  $\omega\in \R^+$, if for any $\Psi_{\omega} \in \mathcal G_{\omega}$ and $\varepsilon>0$, there is $\delta(\varepsilon)>0$ such that for $U_0\in X$ with
\[
||U_0 -\Psi_{\omega}||_X< \delta, 
\]
then the initial value problem associated with the system (\ref{1bbl}) has a unique global solution $U(t)$ with $U(0)=U_0$ satisfying 
\[
\inf_{\Psi\in \mathcal G_{\omega}}||U(t)-\Psi||_X< \varepsilon.
\]
\end{defn}
\subsection{Global well-posedness} 
Although the analysis of the initial value problem associated with the Boussinesq system (\ref{1bbl}) is not part of the scope of this paper, we include a brief discussion of the global existence for small initial data, assuming local well-posedness. The latter assumption for specific nonlinearities must follow from the existence of a $C_0$ semi-group associated with the linear part of the Boussinesq system (\ref{1bbl}). Moreover, in the case where $F$ were a $(p+2)$-homogeneous function, it is possible to obtain global existence for initial data near a traveling wave solution $\Psi_{\omega}$, using the existence of invariance sets under flow for the Boussinesq system (\ref{1bbl}) in the case $S''(\omega)>0$. 

We use the Hamiltonian structure together with the extension of the existence interval for small initial data to obtain a global existence result of  \eqref{1bbl}.  In particular, from item iii) of Lemma \ref{esf-lev-2021}, given any solution of \eqref{1bbl} with $t \in [ 0, t_1]$, there are two fixed positive constants $C_0, C_1$ independent of the solution such that for any $t \in [ 0, t_1]$,
$$
C_0 \| (\eta , u )\|_{H^2(\mathbb{R})\times H^2(\mathbb{R})} ^2\left ( 1 - C_1
\left(\| (\eta , u )\|_{H^2(\mathbb{R})\times H^2(\mathbb{R})}^{q_1}+\| (\eta , u )\|_{H^2(\mathbb{R})\times H^2(\mathbb{R})}^{q_2}\right)\right ) \leq \mathcal{H} \begin{pmatrix}\eta  \\ u \end{pmatrix}= \mathcal{H} \begin{pmatrix}\eta_0 \\ u_0 \end{pmatrix}\, .
$$
Therefore, if $$\frac12 - C_1
\| (\eta , u )\|_{H^2(\mathbb{R})\times H^2(\mathbb{R})}^{q_j} \geq \frac14$$ or
$$\| (\eta , u )\|_{H^2(\mathbb{R})\times H^2(\mathbb{R})} \leq \left(\frac{1}{4C_1} \right)^{\frac{1}{q_j}},$$ for $j=1,2$,
then we get 
$$
\| (\eta , u )\|_{H^2(\mathbb{R})\times H^2(\mathbb{R})}  \leq \left ( \frac 2{C_0}\mathcal{H} \begin{pmatrix}\eta_0 \\ u_0 \end{pmatrix} \right )^{\frac12}, \qquad \mbox{with  }\quad \mathcal{H} \begin{pmatrix}\eta_0 \\ u_0 \end{pmatrix} \geq 0 \, .
$$
From the assumption on the local well-posedness above, we choose $\delta_0> 0$ with $2\delta_0  \leq \left(\frac{1}{4C_1} \right)^{\frac{1}{q_j}}$ for $j=1,2$ and let the initial condition satisfy
$$
\| (\eta_0 , u _0)\|_{H^2(\mathbb{R})\times H^2(\mathbb{R})} \leq \delta_0
\quad \mbox{and}\quad \left ( \frac 2{C_0}\mathcal{H} \begin{pmatrix}\eta_0 \\ u_0 \end{pmatrix} \right )^{1/2} \leq \delta_0\, .
$$
Then, the solution of \eqref{1bbl} exists for $t\in [0, T_0]$ and satisfies
$
\| (\eta , u )\|_{H^2(\mathbb{R})\times H^2(\mathbb{R})}  \leq \delta_0\,
$ for all $t \in [0, T_0]$. Hence, from the local well-posedness result again, we can extend the solution to $[0, 2T_0]$. Continuing this procedure produces a global solution for $t \in [0, \infty)$, which is unique and satisfies $
\| (\eta , u )\|_{H^2(\mathbb{R})\times H^2(\mathbb{R})}  \leq \delta_0\,
$ for all $t \in [0, \infty)$.
\begin{rem}\label{G-e}
We want to point out that the estimate (\ref{est1}) implies that when $\omega \to 1^{-}$, $(\psi_{\omega}, v_{\omega})$ is small in $H^2(\R)\times H^2(\R)$. Therefore, if the initial condition of (\ref{1bbl}) is near $(\psi_{\omega}, v_{\omega})\in \mathcal G_{\omega}$ for $\omega$ near $1^{-}$, then the global existence result implies that the solution of (\ref{1bbl}) exists for $t\in [0, \infty)$. 
\end{rem}
Now, we establish the existence of a map that associates a speed
$\omega$ to each $V$ in some neighborhood of $\mathcal G_{\omega}$. For a nonempty set $S\subset (H^2(\R^2))^2$, we define 
\[
\mathcal V_{\varepsilon}(S)= \cup_{V\in S}B_{\varepsilon}(V)=\{U\in (H^2(\R^2))^2: ||V-U||_X<\varepsilon \ \mbox{for some $V\in S$}\}.
\]
\begin{lem} There exist $\varepsilon>0$ and a continuous map $\omega: \mathcal V_{\varepsilon}(\mathcal G_{\omega})\to \R$
such that $$S(\omega(U))= J_1(U)=\frac12 N(U)-K(U),$$ for each $U \in  \mathcal V_{\varepsilon}(\mathcal G_{\omega})$.
\end{lem}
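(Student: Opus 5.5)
The plan is to follow Shatah's construction and define the map by inverting the scalar function $S$. First observe that for every $(\psi,v)\in\mathcal G_{\omega}$,
\[
J_1(\psi,v)=\tfrac12 N(\psi,v)-K(\psi,v)=J_{\omega}(\psi,v)-\tfrac12 P_{\omega}(\psi,v)=S(\omega),
\]
because $P_\omega=0$ on $\mathcal G_\omega$. So $J_1$ is constant, equal to $S(\omega)$, on the ground state set. The natural candidate is therefore
\[
\omega(U):=S^{-1}\bigl(J_1(U)\bigr),
\]
where $S^{-1}$ is a local inverse of $S$ near $\omega$. The proof then has two parts: show that $S$ is a homeomorphism from a neighbourhood of $\omega$ onto a neighbourhood of $S(\omega)$, and show that $J_1(U)$ stays in that image neighbourhood when $U\in\mathcal V_\varepsilon(\mathcal G_\omega)$ with $\varepsilon$ small.

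\textbf{Step 1 (continuity of $J_1$).} By Lemma \ref{esf-lev-2021} (vii), $N$ and $K$, and hence $J_1$, are continuous on $X$. They are moreover uniformly Lipschitz on bounded subsets of $X$; this follows from the bound on $D^2F$ in assumption (b) via the mean value theorem and the embedding $H^1(\R)\subset L^\infty(\R)$. The set $\mathcal G_\omega$ is bounded in $X$, since on it $\frac{p}{2(p+2)}I_\omega\le S(\omega)$ and \eqref{IG1} holds. Hence for every $\eta>0$ there is $\varepsilon>0$ such that $|J_1(U)-S(\omega)|<\eta$ for all $U\in\mathcal V_\varepsilon(\mathcal G_\omega)$.

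\textbf{Step 2 (local invertibility of $S$).} By Lemma \ref{estd} (iii), $S$ is differentiable with $S'(\omega)=I_2(\psi,v)$ for any $(\psi,v)\in\mathcal G_\omega$. It therefore suffices to show that $I_2$ has a fixed sign, bounded away from zero, on $\mathcal G_{\omega'}$ for $\omega'$ in a neighbourhood of $\omega$. Then $S$ is strictly monotone and continuous there, and its inverse $S^{-1}$ is continuous on an interval $(S(\omega)-\eta_0,S(\omega)+\eta_0)$. I expect this to be the main obstacle, since the paper has no direct formula for the sign of $I_2$.

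My first attempt will be the regime $\omega$ close to $1^-$, which is all that Theorem \ref{main} needs. Under the scaling \eqref{kdv-s}, $I_2(\psi,v)$ equals a positive power of $\epsilon$ times
\[
-2\int_{\R}\bigl(\epsilon^{-\frac{4}{p+1}}z^\ep w^\ep+\cdots\bigr)\,dy .
\]
By Lemmas \ref{lim-1} and \ref{lim-2}, $(z^\ep,w^\ep)\to(w_0,w_0)$ strongly in $H^2(\R)\times H^2(\R)$ with $w_0\neq0$. Hence the leading term is $-2\epsilon^{-\frac{4}{p+1}}\|w_0\|_{L^2}^2(1+o(1))<0$, so $S'<0$ uniformly on a neighbourhood of such $\omega$. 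If this route fails, the fallback is a monotonicity argument on $J_{\omega,p}$: use the characterization of $S(\omega)$ as an infimum over $\{P_\omega\le0\}$ from item (ii) of the existence theorem to show directly that $S$ is strictly monotone, and get continuity from Lemma \ref{estd} (ii).

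\textbf{Step 3 (conclusion).} Choose $\eta\le\eta_0$ and then $\varepsilon$ as in Step 1. For $U\in\mathcal V_\varepsilon(\mathcal G_\omega)$ the value $J_1(U)$ lies in the domain of the local inverse, so $\omega(U):=S^{-1}(J_1(U))$ is well defined. It is continuous as a composition of continuous maps, and $S(\omega(U))=J_1(U)$ by construction. Finally, on $\mathcal G_\omega$ itself we have $\omega(U)=\omega$, which confirms that the map is consistent with the ground state set.
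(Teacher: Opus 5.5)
Your proposal is correct and follows the same route as the paper. Both arguments use $J_1=J_\omega-\tfrac12P_\omega\equiv S(\omega)$ on $\mathcal G_\omega$, the continuity of $J_1$ and the strict monotonicity of $S$ to define $\omega(U)=S^{-1}(J_1(U))$. You also supply details the paper omits: uniform continuity of $J_1$ on $\mathcal V_\varepsilon(\mathcal G_\omega)$, via the boundedness of $\mathcal G_\omega$, and the sign $S'=I_2<0$ near $1^-$ via the KdV scaling, which the paper only ``recalls'' here and establishes later, for $\omega$ near $1^-$, in Theorem \ref{behd}. Like the paper, this justifies monotonicity only for $\omega$ near $1^-$, and your fallback through the $\{P_\omega\le0\}$ characterization would still need a sign for $I_2$ on ground states.
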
  
\begin{proof}
Recall that $S$ is a continuous and strictly decreasing function. Then, it follows that $S^{-1}$ is a continuous and strictly decreasing function on the range of $S$. Now, since for any $\Psi\in G_{\omega}$ \ $(I_{2, \omega}(\psi)=N(\psi))$
\[
S(\omega) = J_{\omega}(\Psi) = J_1(\Psi), 
\]
then,  from the continuity of $S_1$ we obtain that 
there exists $\varepsilon>0$  such that $S_1(U)$ is in the range of $J_{\omega}$ for any $U\in G_{\omega}$. We may  define
\begin{equation}
\omega(U)= S^{-1}(J_1(U))
\end{equation}
for any such $U$. The continuity of this map follows from the continuity of $S^{-1}$ and $J_1$.
\end{proof}
The key result needed in the stability analysis is the following:
\begin{lem}\label{prelem} Suppose that $S''(\omega)>0$. Then there exists $\varepsilon>0$ such that for $\Psi
\in {\mathcal{G}}_{\omega}$ and $V\in \mathcal V_{\varepsilon}(\mathcal G_{\omega})$ it follows that
$$
\mathcal H(V)-\mathcal H\left(\Psi\right)+\omega(V)\left(\mathcal{Q}(V)-\mathcal{Q}\left(\Psi\right)\right) \geq \frac{1}{4} S^{\prime \prime}(\omega)|\omega(V)-\omega|^2.
$$
\end{lem}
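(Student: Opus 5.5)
Since $\mathcal H$ and $\mathcal Q$ are conserved and $J_{\omega}=\mathcal H+\omega\mathcal Q$, the plan is the standard Grillakis--Shatah--Strauss/Shatah variational argument. The left-hand side is regrouped using $J_{\omega(V)}$ and compared with $S(\omega(V))$, after which the second order Taylor expansion of $S$ at $\omega$ is used. Throughout we identify $\mathcal H(\Psi)+\omega\mathcal Q(\Psi)=J_\omega(\Psi)=S(\omega)$ for $\Psi\in\mathcal G_\omega$. We also use $\mathcal Q(\Psi)=\frac12 I_2(\Psi)=\frac12 S'(\omega)$, up to the normalization of $\mathcal Q$ fixed in the paper, by Lemma \ref{estd} iii.

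Write $\omega_1=\omega(V)$. Then
\[
\mathcal H(V)+\omega_1\mathcal Q(V)-\mathcal H(\Psi)-\omega_1\mathcal Q(\Psi)=J_{\omega_1}(V)-S(\omega)-(\omega_1-\omega)\mathcal Q(\Psi),
\]
and the last term equals $-(\omega_1-\omega)S'(\omega)$ with the above identification. The key step is the lower bound $J_{\omega_1}(V)\ge S(\omega_1)$. For this I would use the map $\omega(\cdot)$ of the preceding lemma, namely $S(\omega(V))=J_1(V)=\frac12N(V)-K(V)$. In the homogeneous normalization $N=(p+2)K$, $J_1$ is exactly the value of $J_{\omega_1}$ on the Nehari scaling. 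Indeed, take $\lambda>0$ with $P_{\omega_1}(\lambda V)=0$. By the variational characterization in part ii of the existence theorem (with $J_{\omega,p}$ and $P_\omega\le 0$), we get $J_{\omega_1}(V)\ge J_{\omega_1,p}(V)$ whenever $P_{\omega_1}(V)\le 0$. One must also check that the definition of $\omega(V)$ forces $P_{\omega_1}(V)$ to be nonpositive, or at least of lower order. I expect this to be the main obstacle. The route I would try first is to show $P_{\omega_1}(V)=o(|\omega_1-\omega|^2)$ via continuity of $V\mapsto P_{\omega(V)}(V)$ near $\mathcal G_\omega$, and then expand $J_{\omega_1}(\lambda V)$ in $\lambda$ around $\lambda=1$. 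Since $\lambda=1$ is a maximum along the ray, $J_{\omega_1}(V)\ge S(\omega_1)-o(|\omega_1-\omega|^2)$.

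With this in hand, the left-hand side is at least
\[
S(\omega_1)-S(\omega)-S'(\omega)(\omega_1-\omega)-o(|\omega_1-\omega|^2)=\tfrac12 S''(\omega)(\omega_1-\omega)^2+o(|\omega_1-\omega|^2).
\]
Here Taylor's theorem is applied to $S$. This needs $S\in C^2$ near $\omega$, which I take as implicit in the hypothesis $S''(\omega)>0$, together with Lemma \ref{estd} iii for $S'$. By continuity of $\omega(\cdot)$ and the fact that $\omega(\Psi)=\omega$ on $\mathcal G_\omega$ (strict monotonicity of $S$), shrinking $\varepsilon$ makes $|\omega_1-\omega|$ small uniformly on $\mathcal V_\varepsilon(\mathcal G_\omega)$. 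The error term is then absorbed, leaving the constant $\frac14 S''(\omega)$.

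Uniformity in $\Psi\in\mathcal G_\omega$ follows because all quantities involving $\Psi$ depend only on $S(\omega)$ and $S'(\omega)$, not on the particular ground state chosen. The compactness of $\mathcal G_\omega$ modulo translations, from part iv of the existence theorem, gives the uniform continuity needed for the remainder estimates.
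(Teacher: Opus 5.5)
Your overall structure matches the paper's: regroup the left-hand side as $J_{\omega_1}(V)-S(\omega)-(\omega_1-\omega)\mathcal Q(\Psi)$, prove $J_{\omega_1}(V)\ge S(\omega_1)$, then Taylor-expand $S$. However, the key step $J_{\omega_1}(V)\ge S(\omega_1)$, with $\omega_1=\omega(V)$, is not established, and your proposed route cannot establish it.

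\begin{itemize}
\item The inequality you quote is reversed. Since $J_{\omega_1,p}(V)=J_{\omega_1}(V)-\frac1{p+2}P_{\omega_1}(V)$, one has $J_{\omega_1}(V)\ge J_{\omega_1,p}(V)$ only when $P_{\omega_1}(V)\ge0$. That is exactly the case where part ii of the existence theorem does not apply.
\item Continuity of $V\mapsto P_{\omega(V)}(V)$ only gives $P_{\omega(V)}(V)\to0$ as $V$ approaches $\mathcal G_\omega$. It gives no rate in terms of $|\omega(V)-\omega|$. Indeed, if $J_1(V)=S(\omega)$, then $\omega(V)=\omega$ and the right-hand side of the lemma is $0$, while $P_\omega(V)$ need not vanish. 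So no bound $P_{\omega_1}(V)=o(|\omega_1-\omega|^2)$ can hold in general.
\item $\lambda=1$ is not the maximizer of $\lambda\mapsto J_{\omega_1}(\lambda V)$ unless $P_{\omega_1}(V)=0$. Comparing with the Nehari point $\lambda^*$ gives only the upper bound $J_{\omega_1}(V)\le J_{\omega_1}(\lambda^*V)$. A lower bound obtained by expanding at $\lambda^*$ costs $O(P_{\omega_1}(V)^2)$, which the example above shows you cannot absorb either.
\end{itemize}

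The missing idea is that no smallness of $P$ is needed: the definition of $\omega(V)$ yields an exact identity. For every $\omega$ one has $J_\omega-\tfrac12P_\omega=\tfrac12N-K=J_1$. Hence $S(\omega(V))=J_1(V)$ means
\[
J_{\omega(V)}(V)=S(\omega(V))+\tfrac12P_{\omega(V)}(V).
\]
\begin{itemize}
\item If $P_{\omega(V)}(V)\ge0$, this already gives $J_{\omega(V)}(V)\ge S(\omega(V))$.
\item If $P_{\omega(V)}(V)<0$, part ii of the existence theorem (applicable since $V\neq0$) yields $S(\omega(V))\le J_{\omega(V),p}(V)=S(\omega(V))+\frac{p}{2(p+2)}P_{\omega(V)}(V)<S(\omega(V))$, a contradiction.
\end{itemize}
So the definition of $\omega(V)$ forces $P_{\omega(V)}(V)\ge0$, which is the harmless sign, not the nonpositive one you expected to need. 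Consequently $J_{\omega(V)}(V)\ge S(\omega(V))$ holds with no error term; this is the content of the paper's two-case argument.

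The Taylor step then finishes the proof as you describe, provided you use $\mathcal Q(\Psi)=S'(\omega)$, as the paper does. With the factor $\tfrac12$ you wrote in your identification, an uncancelled first-order term $\tfrac12S'(\omega)(\omega_1-\omega)$ of indefinite sign would remain.
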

\begin{proof} Using that $S'(\omega)=\mathcal Q(\Psi)$  and Taylor expansion, 
$$
S(\omega_1)\geq S(\omega)+S'(\omega)(\omega_1-\omega)+ \frac{1}{4} S^{\prime \prime}(\omega)|\omega_1-\omega|^2.
$$
for $\omega_1$ sufficiently close to $\omega$, which implies that
$$
S(\omega(V))\geq  H\left(\Psi\right)+\mathcal Q(\Psi)\omega(V)+ \frac{1}{4} S^{\prime \prime}(\omega)|\omega(V)-\omega|^2.
$$
for $V\in \mathcal V_{\varepsilon}(\mathcal G_{\omega})$ and $\varepsilon>0$ but small enough. If we had $P_{\omega(V)}(V)\leq 0$, we chose $\Phi\in {\mathcal{G}}_{\omega(V)}$ then, 
\begin{align*}
S(\omega(V))&=J_{\omega(V), p}(\Phi)
\leq J_{\omega(V), p}(V)= J_{\omega(V)}(V) -\frac1{p+2} P_{\omega(V)}(V)\\
&\leq J_{\omega(V)}(V) -\frac1{2} P_{\omega(V)}(V)
= J_{1}(V) = S(\omega(V)). 
\end{align*}
On the other hand, If we had $P_{\omega(V)}(V)\geq 0$, then
\[
S(\omega(V))=J_{1}(V)=J_{\omega(V)}(V) -\frac1{p+2} P_{\omega(V)}(V)\leq J_{\omega(V)}(V)= \mathcal H(V)+\omega(V)  \mathcal Q(V).
\]
So, we find that all of the quantities above are equivalent, meaning that $V$ achieves the same minimum as $\Phi$, and so $V\in \mathcal G_{\omega(V)}$. In other words, 
\[
S(\omega(V))= J_{\omega(V)}(V)=\mathcal H(V)+\omega(V)  \mathcal Q(V).
\]
Using this fact in a previous inequality, we get the desired result.
\end{proof}
\begin{thm}\label{t-45} Let $0<\omega<\min\left\{1,\frac{-a}{b},\frac{-c}{b}, \frac{a_2}{b_2},\frac{c_2}{b_2}\right\}$ and the wave speed $0<{\omega}<1$ near $1$ with $S''(\omega)>0$. Then the set of ground states $\mathcal G_{\omega}$ is stable.
\end{thm}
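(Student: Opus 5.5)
We argue by contradiction, in the standard Grillakis--Shatah--Strauss/Shatah style, using Lemma \ref{prelem} as the key coercivity estimate and the compactness of minimizing sequences (item iv of the existence theorem) as the source of the contradiction. Suppose $\mathcal G_{\omega}$ is not stable. Then there exist $\varepsilon_0>0$, a sequence of initial data $U_{n}(0)$ with $\inf_{\Psi\in\mathcal G_\omega}\|U_n(0)-\Psi\|_X\to 0$, and times $t_n$ such that the corresponding solutions satisfy $\inf_{\Psi\in\mathcal G_{\omega}}\|U_n(t_n)-\Psi\|_X=\varepsilon_0$. Here we use Remark \ref{G-e}: since $\omega$ is close to $1^-$, the ground states are small in $X$, so the global existence result applies to data near $\mathcal G_\omega$. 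Continuity in time then lets us choose $t_n$ as the first exit time from $\mathcal V_{\varepsilon_0}(\mathcal G_\omega)$, with $\varepsilon_0$ smaller than the $\varepsilon$ of Lemma \ref{prelem} and of the lemma defining the map $\omega(U)$. With this choice, $U_n(t)\in\overline{\mathcal V_{\varepsilon_0}(\mathcal G_\omega)}$ for all $t\in[0,t_n]$.

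Set $V_n=U_n(t_n)$ and fix $\Psi\in\mathcal G_\omega$. The charge $\mathcal Q$ and the Hamiltonian $\mathcal H$ are conserved (we are in the Hamiltonian case $b=d$, $b_2=d_2$), and both are continuous on $X$. Hence $\mathcal H(V_n)\to\mathcal H(\Psi)$ and $\mathcal Q(V_n)\to\mathcal Q(\Psi)$; more precisely, these limits hold along $\Psi_n\in\mathcal G_\omega$ close to $U_n(0)$, and since all elements of $\mathcal G_\omega$ share the same values $S(\omega)$ and $S'(\omega)=\mathcal Q$, the limits do not depend on the choice. The sequence $V_n$ is bounded in $X$, and $\omega(V_n)$ is bounded by the continuity of $\omega(\cdot)$. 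Lemma \ref{prelem} then gives
\[
\tfrac14 S''(\omega)|\omega(V_n)-\omega|^2\le \mathcal H(V_n)-\mathcal H(\Psi)+\omega(V_n)\big(\mathcal Q(V_n)-\mathcal Q(\Psi)\big)\to 0 .
\]
Since $S''(\omega)>0$, this forces $\omega(V_n)\to\omega$.

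Next we show that $V_n$ is a minimizing sequence for $S(\omega)$. From the proof of Lemma \ref{prelem} we have $V_n\in\mathcal G_{\omega(V_n)}$, or at least $J_{\omega(V_n)}(V_n)=S(\omega(V_n))$ together with $P_{\omega(V_n)}(V_n)=0$. Then
\[
J_\omega(V_n)=J_{\omega(V_n)}(V_n)+(\omega-\omega(V_n))\mathcal Q(V_n)=S(\omega(V_n))+o(1)\to S(\omega)
\]
by continuity of $S$, and
\[
P_\omega(V_n)=P_{\omega(V_n)}(V_n)+(\omega-\omega(V_n))I_2(V_n)\to0 .
\]
We rescale $\beta_nV_n$ with $\beta_n\to1$ so that $P_\omega(\beta_nV_n)=0$. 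This is possible because $N(V_n)$ is bounded below: $I_\omega(V_n)$ is comparable to $S$, and we use the monotonicity in Lemma \ref{esf-lev-2021}(iv). The sequence $\beta_nV_n$ is then a genuine minimizing sequence for $S(\omega)$.

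By item iv of the existence theorem there are translations $y_n$ and $\Phi\in\mathcal G_\omega$ with $\beta_nV_n(\cdot+y_n)\to\Phi$ in $X$. Since $\mathcal G_\omega$ is translation invariant, this gives $\inf_{\Psi\in\mathcal G_\omega}\|V_n-\Psi\|_X\to0$, which contradicts the value $\varepsilon_0$. The main obstacle is the middle step, namely justifying $\omega(V_n)\to\omega$ and the rescaling back onto $\mathcal M_\omega$ uniformly. This requires the map $\omega(\cdot)$ to be well defined on the whole closed neighborhood, which we get by shrinking $\varepsilon_0$, and it requires the Taylor bound in Lemma \ref{prelem} to hold uniformly. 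Both follow from the continuity of $S''$ near $\omega$, which we would take from the construction in Section~\ref{sec4}.
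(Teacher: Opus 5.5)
Your proof follows the paper's route. You argue by contradiction, use Lemma \ref{prelem} together with conservation of $\mathcal H$ and $\mathcal Q$ to force $\omega(V_n)\to\omega$, show that $V_n=U_n(t_n)$ is essentially a minimizing sequence for $S(\omega)$, and conclude by compactness. Two of your additions improve on the paper. The first-exit-time choice of $t_n$ guarantees that $V_n$ stays where $\omega(\cdot)$ and Lemma \ref{prelem} are available, which the paper never checks. The rescaling $\beta_nV_n\in\mathcal M_\omega$ is needed because the paper applies compactness to a sequence that only satisfies $P_\omega\to 0$.

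One step, however, rests on a false claim. You assert that $V_n\in\mathcal G_{\omega(V_n)}$, or at least that $J_{\omega(V_n)}(V_n)=S(\omega(V_n))$ and $P_{\omega(V_n)}(V_n)=0$. The proof of Lemma \ref{prelem} states this, but it does not hold in general. Since $J_{\omega',p}=J_1+\frac{p}{2(p+2)}P_{\omega'}$ and $S(\omega')=\inf\{J_{\omega',p}:P_{\omega'}\le 0\}$, the defining relation $J_1(V)=S(\omega(V))$ forces $P_{\omega(V)}(V)\ge 0$. Hence
\[
J_{\omega(V)}(V)=S(\omega(V))+\tfrac12 P_{\omega(V)}(V)\ge S(\omega(V)).
\]
Equality holds exactly when $V$ is a ground state at speed $\omega(V)$, that is, a solution of the travelling-wave system. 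A generic element of $\mathcal V_\varepsilon(\mathcal G_\omega)$ is not such a solution. So only the inequality $S(\omega(V))\le J_{\omega(V)}(V)$ is legitimately available, and that is all Lemma \ref{prelem} needs.

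The two limits you want follow without the false claim, as in the paper. First, $J_\omega(V_n)=\mathcal H(V_n)+\omega\mathcal Q(V_n)\to S(\omega)$ directly from the conservation you already established. Second,
\[
\tfrac12 P_\omega(V_n)=J_\omega(V_n)-J_1(V_n)=J_\omega(V_n)-S(\omega(V_n))\to 0,
\]
because $\omega(V_n)\to\omega$ and $S$ is continuous. With this replacement, the rest of your argument (rescaling, compactness, contradiction with the exit distance $\varepsilon_0$) goes through.

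A minor point: continuity of $S''$ is not needed. Twice differentiability of $S$ at $\omega$ gives the Taylor bound on a small interval around $\omega$. The uniformity you need is that $\omega(\cdot)$ maps the closed neighbourhood into that interval. This follows because $J_1$ is uniformly continuous on bounded sets and equals $S(\omega)$ on $\mathcal G_\omega$.
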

\begin{proof}
We argue by contradiction. If $S''(\omega) >0$ and the ground state set $\mathcal G_{\omega}$ is unstable, Then for $\Psi_{\omega}\in \mathcal G_{\omega}$, there exists $\varepsilon_0>0$ such that for $0<\varepsilon<\varepsilon_0$ 
there exists a sequence of initial data $(U_0^k)_k \subset \mathcal V_{\frac1k}(\mathcal G_{\omega})$, and $t_k > 0$ such that the solution $U_k(t)$ of the Boussinesq system (\ref{1bbl}) with initial data $U(0)=U_0^k$ satisfies (see Remark  \ref{G-e})
\begin{equation}\label{contradiction}
\lim_{k\to \infty}||U_0^k-\Psi_{\omega}||_X =0 \ \ \mbox{and} \ \ \inf_{V\in\mathcal G_{{\omega}}}\|U_k(t_k)-V\|_{X}\geq \varepsilon>0.
\end{equation}
Now, using Lemma \ref{prelem},  the invariance of $\mathcal G_{\omega}$ and $\mathcal Q$ on solutions and the continuity of $S$, we conclude that 
\[
\lim_{k \to \infty}J_{1}(U_k(t_k))=\lim_{k \to \infty}S(\omega(U_k(t_k)))=S(\omega).
\]
Moreover, we also have 
\[
\lim_{k \to \infty}J_{\omega}(U_k(t_k))=\lim_{k \to \infty}(\mathcal H_{\omega}(U_k(t_k))+\omega \mathcal Q(U_k(t_k)))=\lim_{k \to \infty}(\mathcal H_{\omega}(U_k(0))+\omega \mathcal Q(U_k(0)))=S(\omega).
\]

On the other hand, we have
\[
\lim_{k \to \infty}\frac12 P_{\omega}(U_k(t_k))=\lim_{k \to \infty}( J_{\omega}(U_k(t_k))-J_{1}(U_k(t_k)))=0, 
\]
which means that $U_k(t_k)$ is a minimizing sequence. Then, there is a subsequence $U_{k_j}(t_{k_j})$ and $\Psi_j \in \mathcal G_{\omega}$ such 
$$
\lim_{j\rightarrow\infty}\|U_{k_j}(t_{k_j})- \Psi_j\|_{X}=0.
$$
This clearly contradicts the condition (\ref{contradiction}), and thus the set of ground state solutions $\mathcal G_{\omega}$ is stable.
\end{proof}
Now, we analyze the asymptotic behavior of $S$ and $S'$ as $|\omega|\to1^-$, in order to determine the sign of  $S''(\omega)$ when $\omega$ is near $1^{-}$. Recall from Lemma \ref{estd} that
\begin{equation}\label{deri}
S'(\omega)=I_{2}(u^{\omega},v^{\omega})=\mathcal Q(u^{\omega},v^{\omega}).
\end{equation}
\begin{thm}\label{behd}
Let $0<|\omega|<\min\left\{1,\frac{-a}{b},\frac{-c}{b},
\frac{a_2}{b_2},\frac{c_2}{b_2}\right\}$,
and $(u^{\omega},v^{\omega})\in\mathcal{G}_{\omega}$. Then
\[
\lim_{\omega\to1^-}S(\omega)=0,\quad  \text{and} \quad
I_{2,\omega}(u^{\omega},v^{\omega})<0,
\]
for $\omega$ sufficiently close to $1^-$.
\end{thm}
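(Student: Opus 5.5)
Both claims come from the KdV scaling of Section \ref{sec3}, read through the identity $S(\omega)=\frac{p}{2(p+2)}I_\omega(\psi_\omega,v_\omega)$ for ground states. The plan is first to get $S(\omega)\to0$ from the rescaled bound \eqref{est1}, and then to get the sign of $I_{2,\omega}$ by comparing $I_2$ with the part of $I_\omega$ that is quadratic in $(\psi,v)$ without derivatives. Throughout we write $\omega^2=1-\epsilon^{4/(p+1)}$, so $\omega\to1^-$ is the same as $\epsilon\to0^+$.

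\textbf{First claim.} By \eqref{Ic=eIe} and \eqref{est1},
\[
S(\omega)=\epsilon^{\frac{3p+8}{p(p+1)}}S^\epsilon\le 2\,\epsilon^{\frac{3p+8}{p(p+1)}}\mathcal J_0 .
\]
Here $\mathcal J_0$ is a finite number independent of $\epsilon$. It is finite because $J_0$ has at least one admissible $w$, obtained from assumption (c) on $F$ and the scaling argument used to produce $\beta(\epsilon)$. The exponent $\frac{3p+8}{p(p+1)}$ is positive, and $S(\omega)>0$, so $\lim_{\omega\to1^-}S(\omega)=0$. The same bound shows that $I_\omega(\psi_\omega,v_\omega)=\frac{2(p+2)}{p}S(\omega)$ tends to $0$.

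\textbf{Second claim.} Pass to the rescaled minimizers $(z^\epsilon,w^\epsilon)$, using $\psi=\alpha z(\epsilon^{1/(p+1)}x)$ and $v=\alpha w(\epsilon^{1/(p+1)}x)$. The change of variables gives
\[
I_{2,\omega}(\psi,v)=-2\omega\,\alpha^2\epsilon^{-\frac{1}{p+1}}\int_{\mathbb R}\Big(z w+b\epsilon^{\frac{2}{p+1}}z'w'+b_2\epsilon^{\frac{4}{p+1}}z''w''\Big)dy .
\]
By Lemma \ref{lim-1} and Lemma \ref{lim-2}, up to translations and subsequences, $(z^\epsilon,w^\epsilon)\to(w_0,w_0)$ strongly in $H^2\times H^2$ with $w_0\neq0$. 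Hence the integral tends to $\|w_0\|_{L^2}^2>0$, because the $b$ and $b_2$ terms carry positive powers of $\epsilon$ and are multiplied by bounded quantities. So the bracket is positive for $\epsilon$ small, and $I_{2,\omega}<0$ because $\omega>0$.

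To cover the whole family and not only a subsequence, argue by contradiction. Suppose there were $\omega_j\to1^-$ and ground states with $I_{2,\omega_j}\ge0$. Applying Lemma \ref{lim-1} to that sequence produces a further subsequence with a positive limit of the integral, which is a contradiction. Translations do not affect the integral.

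\textbf{Main obstacle.} The delicate point is that the limit $w_0$ is nontrivial, uniformly along every sequence; this is what makes $\int zw$ stay bounded below. Lemma \ref{lim-2} asserts $w_0\neq0$. If needed, I would justify it directly from the constraint. The identity $I^\epsilon=N^\epsilon$ together with Lemma \ref{esf-lev-2021}(vii) gives $I^\epsilon\le C(\|\cdot\|^{q_1+2}+\|\cdot\|^{q_2+2})$. Combined with the coercivity of $I^\epsilon$, which dominates $\|w\|_{L^2}^2+c\|w''\|^2$ uniformly in $\epsilon$ through its $w^2$ and $w''$ terms from Remark \ref{I+}, this yields a uniform lower bound on $\|(z^\epsilon,w^\epsilon)\|_{H^2}$. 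That bound passes to the strong limit.
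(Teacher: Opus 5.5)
Your proposal is correct and follows essentially the same route as the paper. The scaling identity $S(\omega)=\epsilon^{\frac{3p+8}{p(p+1)}}S^\epsilon$ with $S^\epsilon$ bounded gives $S(\omega)\to0$, and rewriting $I_{2,\omega}$ in the rescaled variables gives a positive factor times a quantity tending to $-2\|w_0\|_{L^2}^2<0$, by the strong convergence $(z^\epsilon,w^\epsilon)\to(w_0,w_0)$ with $w_0\neq0$.

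Your contradiction argument over arbitrary sequences $\omega_j\to1^-$, and your constraint-based proof of $w_0\neq0$, fill in points the paper passes over: it argues along a single family and takes $w_0\neq0$ from Lemma \ref{conver}. However, the $\epsilon$-uniform bound on $N^\epsilon$ you use there is immediate only in the homogeneous case. In general the rescaling produces the factor $\epsilon^{\frac{4(q_1-p)}{p(p+1)}}$, so the bound needs $q_1\ge p$.
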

\begin{proof}
From the scaling relation established in \eqref{Ic=eIe}, together with Lemma \ref{lim-2}, we have
\[
S(\omega)=\epsilon^{\frac{3p+8}{p(p+1)}}S^\epsilon,
\]
where $\epsilon\to0^+$ as $\omega\to1^-$. Since
\[
\lim_{\epsilon\to0^+}S^\epsilon=\mathcal J_0<\infty,
\]
it follows immediately that
\[
\lim_{\omega\to1^-}S(\omega)=0.
\]
Next, using the same notation introduced in Section \ref{sec3}, we write
\begin{align*}
\epsilon^{-\frac{4}{p+1}}
I_{2}^{\epsilon}(z^\epsilon,w^\epsilon)
=&
-2\omega(\epsilon)
\int_{\mathbb R}
\Big(
z^\epsilon w^\epsilon
+
b\epsilon^{\frac{2}{p+1}}
z_x^\epsilon w_x^\epsilon
+
b_2\epsilon^{\frac{4}{p+1}}
z_{xx}^\epsilon w_{xx}^\epsilon
\Big)\,dx.
\end{align*}
From Lemma \ref{conver}, after translation if necessary,
\[
(z^\epsilon,w^\epsilon)\to(w_0,w_0)
\quad \text{strongly in } H^2(\mathbb R)\times H^2(\mathbb R),
\]
with $w_0\neq0$. Therefore,
\begin{align*}
\lim_{\epsilon\to0^+}
\epsilon^{-\frac{4}{p+1}}
I_{2}^{\epsilon}(z^\epsilon,w^\epsilon)
&=
-2\int_{\mathbb R}w_0^2\,dx
<0.
\end{align*}
Consequently,
\[
I_{2}^{\epsilon}(z^\epsilon,w^\epsilon)<0,
\]
for $\epsilon>0$ sufficiently small. By the correspondence between the renormalized variables and the original traveling-wave family, this yields
\[
I_{2,\omega}(u^\omega,v^\omega)<0,
\]
for $\omega$ sufficiently close to $1^-$.
\end{proof}

\begin{thm}
There exists $\omega_0\in(0,1)$ sufficiently close to $1$ such that $S$ is decreasing on $(\omega_0,1)$. Moreover,
\[
\lim_{\omega\to1^-}S'(\omega)=0.
\]
\end{thm}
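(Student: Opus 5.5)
We argue directly from the formula $S'(\omega)=I_2(u^\omega,v^\omega)$ of Lemma \ref{estd} iii., combined with the KdV rescaling of Section \ref{sec3}. Given a ground state $(u^\omega,v^\omega)=(\psi_\omega,v_\omega)\in\mathcal G_\omega$, we write $\omega^2=1-\ep^{\frac{4}{p+1}}$ and $(\psi_\omega,v_\omega)(x)=\alpha(z^\ep,w^\ep)(\ep^{\frac1{p+1}}x)$ with $\alpha=\ep^{\frac{4}{p(p+1)}}$. Changing variables $y=\ep^{\frac1{p+1}}x$ gives
\begin{equation*}
S'(\omega)=I_2(\psi_\omega,v_\omega)=-2\alpha^2\ep^{-\frac{1}{p+1}}\int_{\R}\Big(z^\ep w^\ep+b\ep^{\frac{2}{p+1}}(z^\ep)'(w^\ep)'+b_2\ep^{\frac{4}{p+1}}(z^\ep)''(w^\ep)''\Big)\,dy .
\end{equation*}
The prefactor is $\ep^{\frac{8}{p(p+1)}-\frac{1}{p+1}}=\ep^{\frac{8-p}{p(p+1)}}$. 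The integral converges to $\int_\R w_0^2\,dy>0$ by Lemma \ref{conver}, since $(z^\ep,w^\ep)\to(w_0,w_0)$ strongly in $H^2(\R)\times H^2(\R)$ up to translations, and translations do not change the integral.

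For the monotonicity, Theorem \ref{behd} already gives $I_{2,\omega}(u^\omega,v^\omega)<0$ for $\omega$ near $1^-$. Since $\omega>0$, this means $S'(\omega)=I_2(u^\omega,v^\omega)<0$ on some interval $(\omega_0,1)$. One point needs care: the limit in Lemma \ref{conver} is taken only along subsequences. We will handle it by contradiction. If there were $\omega_j\to1^-$ with $S'(\omega_j)\geq0$, we apply Lemma \ref{lim-1} and Lemma \ref{conver} to the corresponding sequence $\ep_j\to0^+$ and obtain a subsequence along which $\alpha^{-2}\ep_j^{\frac1{p+1}}S'(\omega_j)\to-2\|w_0\|_{L^2}^2<0$. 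This is a contradiction, so $S'<0$ on some $(\omega_0,1)$ and $S$ is strictly decreasing there. The same subsequence argument shows the limit statement below holds along the whole family, not just a subsequence.

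For the limit, the display above gives $|S'(\omega)|\le C\,\ep^{\frac{8-p}{p(p+1)}}$ for small $\ep$. Here $C$ is a uniform bound on $\|(z^\ep,w^\ep)\|_{H^2}^2$, which follows from the bound $\frac{p}{2(p+2)}I^{\ep}(z^\ep,w^\ep)\le 2\mathcal J_0$ in \eqref{est1} together with the coercivity of $I^\ep$; the lower-order terms carry nonnegative powers of $\ep$. So $S'(\omega)\to0$ as $\omega\to1^-$ whenever $p<8$, which matches the range $0<p<8$ in Theorem \ref{main}. The subsequence argument gives in addition $S'(\omega)\sim-2\ep^{\frac{8-p}{p(p+1)}}\|w_0\|_{L^2}^2$, with $\ep^{\frac4{p+1}}=1-\omega^2$.

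The main obstacle is this exponent bookkeeping. The statement as written has no restriction on $p$, but the argument gives $S'\to0$ only for $p<8$. At $p=8$ the limit is the nonzero value $-2\|w_0\|_{L^2}^2$, and for $p>8$ we get $S'\to-\infty$. We will therefore state the restriction $0<p<8$ as implicit, consistent with Theorem \ref{main}. A second route to the limit, using only that $S$ is decreasing with $S(\omega)\to0$ from Theorem \ref{behd}, would require convexity of $S$ near $1$ and is therefore circular. This is why we rely on the explicit scaling.
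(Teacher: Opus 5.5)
Your proof is correct within the framework of Section \ref{sec3}. For the monotonicity it follows the paper: both rest on $S'=I_2$ and Theorem \ref{behd}. You add a contradiction-along-subsequences step, which the paper omits even though Lemma \ref{conver} only gives subsequential convergence.

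For the limit you take a genuinely different route. The paper does not rescale $S'$. It evaluates $J_\omega=\frac12 I_1+\frac{\omega}{2}I_2-K$ on the ground state and solves for $\omega I_2=\omega S'(\omega)$ in terms of $S$, $I_1$, $K$; the paper drops the $\frac12$ in front of $\omega I_2$, which is harmless. It then asserts $I_1\le C_3 S(\omega)$ and $|K|\le C_4\bigl(S^{\frac{q_1+2}{2}}+S^{\frac{q_2+2}{2}}\bigr)$ with constants independent of $\omega$, so that everything vanishes with $S$.

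That route is shorter and never uses the limit profile $w_0$. However, the uniformity claim fails. By your own rescaling,
\[
I_1(u^\omega,v^\omega)=\ep^{\frac{8-p}{p(p+1)}}\bigl(2\|w_0\|_{L^2}^2+o(1)\bigr),
\qquad
S(\omega)\sim \ep^{\frac{3p+8}{p(p+1)}}\mathcal J_0 ,
\]
so $I_1/S$ blows up like $(1-\omega^2)^{-1}$. The culprit is the constant in $\|(u,v)\|_X^2\le C_2 I_\omega(u,v)$. Taking $\psi=v$ of low frequency gives $I_\omega(v,v)\approx 2(1-\omega)\|v\|_{L^2}^2$, so $C_2\gtrsim(1-\omega)^{-1}$. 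If you track this loss, the paper's argument yields only $I_1\lesssim S(\omega)/(1-\omega^2)\sim\ep^{\frac{8-p}{p(p+1)}}$, which is exactly your rate.

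Your explicit bookkeeping is therefore the more reliable route. It gives the asymptotics $S'(\omega)\sim-2\ep^{\frac{8-p}{p(p+1)}}\|w_0\|_{L^2}^2$. It also shows that the restriction $0<p<8$ is genuinely needed for $S'(\omega)\to0$: equivalently, the exponent in $S\sim(1-\omega^2)^{\frac{3p+8}{4p}}$ must exceed $1$. The paper's proof of this theorem does not detect that restriction.

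Two caveats:
\begin{enumerate}
\item The uniform $H^2$ bound on $(z^\ep,w^\ep)$ that you invoke comes from coercivity of $I^\ep$, and it is the same bound asserted in Lemma \ref{lim-1}. You inherit it from the paper rather than prove it.
\item In case c) of Theorem \ref{main}, the scaling of Section \ref{sec3} gives $K^\ep=\ep^{\frac{\beta}{p+1}}\int F\to0$, so $K_0=0$ and there is no nontrivial limit. There you must use the modified scaling $\alpha=\ep^{\frac{4-\beta}{p(p+1)}}$ from the proof of Lemma \ref{lem48}. The same computation then gives $S'(\omega)=O\bigl(\ep^{\frac{8-(p+2\beta)}{p(p+1)}}\bigr)$, i.e.\ the threshold $p+2\beta<8$ rather than $p<8$.
\end{enumerate}
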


\begin{proof}
From Theorem \ref{behd} and identity \eqref{deri}, we have
\[
S'(\omega)=I_{2}(u^\omega,v^\omega)<0,
\]
for $\omega$ sufficiently close to $1^-$. Hence, $S$ is decreasing in a neighborhood of $1$.

On the other hand, from Lemma \ref{estd}-(ii) and Lemma \ref{esf-lev-2021}-(vii), there exist positive constants $C_j$ such that
\begin{align*}
I_{1}(u^\omega,v^\omega)
&\leq
C_1\|(u^\omega,v^\omega)\|_X^2
\leq
C_2 I_\omega(u^\omega,v^\omega)
\leq
C_3 S(\omega),
\end{align*}
and
\begin{align*}
|K(u^\omega,v^\omega)|
\leq
C_4\Big(
S(\omega)^{\frac{q_1+2}{2}}
+
S(\omega)^{\frac{q_2+2}{2}}
\Big).
\end{align*}
Using the definition of $J_\omega$, we obtain
\[
S(\omega)
= \frac12 I_1(u^\omega,v^\omega)+\omega I_{2}(u^\omega,v^\omega)
-
K(u^\omega,v^\omega).
\]
Combining this identity with \eqref{deri}, we deduce
\[
\omega S'(\omega)
=
S(\omega)
-\frac12 I_1(u^\omega,v^\omega)
+
K(u^\omega,v^\omega).
\]
Since
\[
\lim_{\omega\to1^-}S(\omega)=0,
\]
the estimates above imply
\[
\lim_{\omega\to1^-}I_1(u^\omega,v^\omega)=0,
\qquad
\lim_{\omega\to1^-}K(u^\omega,v^\omega)=0.
\]
Therefore,
\[
\lim_{\omega\to1^-}S'(\omega)=0.
\]
This completes the proof.
\end{proof}

\subsection{Properties of the scalar function in the homogeneous case} In this subsection, we state some properties of the function $S$,  if $F(q, r, s, t)$ is a homogeneous function of degree $(p+2)$. 
In this case, we use the fact that $N(u, v)=(p+2)K(u, v)$, which allows us to define
$$
    \mathcal I_{\omega}= \inf_{\Psi\in X}\{I_{\omega}(\Psi): K(\Psi)=1\}. 
$$
As shown in \cite{CQS-2024}, we have
 \begin{equation}\label{s_w}
S(\omega)
= \frac{p}{2(p+2)}\left(\frac{1}{p+2}\right)^{\frac{2}{p}}\left ( I_\omega(\Psi_{\omega})\right ) ^{\frac{p+2}p},
\end{equation}
which is uniquely defined  for any $\Psi_{\omega} \in \mathcal{G}_\omega$. Moreover, for $\Psi_{\omega}\in\mathcal{G}_{\omega}$,  it follows that
\begin{equation}\label{S-pr}
S\, '({\omega})=\frac{1}{2}\left(\frac{1}{p+2}\right)^{\frac{2}{p}}\frac{I_{2,{\omega}}(\Psi_{\omega})}{\omega}(I_{\omega}(\Psi_{\omega}))^{\frac{2}{p}}= \mathcal Q(\widetilde{\Psi}_{\omega})\,,
\end{equation}
where $\widetilde{\Psi}_{\omega}=\left(\frac{\mathcal I_{\omega}}{p+2}\right)^{\frac{1}{p}} {\Psi}_{\omega}$ corresponds to a minimizer for $S(\omega)$.
Additionally, we have $S\, '(\omega)<0$ for $\omega$ near $1^{-}$.

\color{red}

\color{black}

Using the previous results, we compute $S''(\omega)$ for  $\omega$  near $1^{-}$. 

\begin{lem}\label{lem48}
Let $F(q, r, s, t)$ be a homogeneous function of degree $(p+2)$.  For $\Psi_{\omega}\in \mathcal G_{\omega}$, we have
\begin{equation*}
S''({\omega})=\frac{1}{p}\left(\frac{1}{p+2}\right)^{\frac{2}{p}}\left(\frac{I_{2,{\omega}}(\Psi_{\omega})}{\omega}\right)^{2}I_{\omega}(\Psi_{\omega})^{\frac{2-p}{p}}+\frac{1}{2}\left(\frac{1}{p+2}\right)^{\frac{2}{p}}I_{\omega}(\Psi_{\omega})^{\frac{2}{p}}\frac{d}{d\omega}\left(\frac{I_{2,{\omega}}(\Psi_{\omega})}{\omega}\right).
\end{equation*}
Moreover, there is $\omega*\in (0, 1)$ such that  $S''(\omega)>0$, for $\omega\in (\omega^*, 1)$ and $ 0<p <8$.  In the case where $F(\cdot, r, \cdot, t)$ is a homogeneous function of degree $\beta$, there is $\omega*\in (0, 1)$ such that for $\omega\in (\omega^*, 1)$ and $ 0<p+2\beta <8$, $S''(\omega)>0$.
\end{lem}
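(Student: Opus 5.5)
The formula for $S''(\omega)$ follows by differentiating \eqref{S-pr} in $\omega$. Write $A=\frac12\left(\frac{1}{p+2}\right)^{2/p}$, $g(\omega)=I_{2,\omega}(\Psi_\omega)/\omega$ and $h(\omega)=I_\omega(\Psi_\omega)$. Then $S'(\omega)=A\,g(\omega)h(\omega)^{2/p}$, and the product rule gives
\[
S''(\omega)=\frac{2A}{p}\,g\,h^{\frac{2-p}{p}}h'+A\,h^{2/p}g'.
\]
To identify $h'$, differentiate \eqref{s_w}. This gives $S'=\frac{1}{2}\left(\frac{1}{p+2}\right)^{2/p}h^{2/p}h'/p$ after simplifying the constants, since $\frac{p}{2(p+2)}\cdot\frac{p+2}{p}=\frac12$. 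Comparing with \eqref{S-pr} yields $h'=I_{2,\omega}(\Psi_\omega)/\omega=g$. Substituting produces exactly the first term $\frac1p(\frac{1}{p+2})^{2/p}g^2h^{(2-p)/p}$. The differentiability of $h$ and $g$ in $\omega$ should be justified via the uniqueness in \eqref{s_w} together with Lemma \ref{estd}. Near $1^-$ one may use the smooth family of Lemma \ref{lem:H2-bound}.

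For the sign, I would pass to the KdV scaling with $\omega^2=1-\epsilon^{4/(p+1)}$ and $\alpha=\epsilon^{4/(p(p+1))}$. Then $h=I_\omega(\Psi_\omega)=\epsilon^{3/(p+1)}\alpha^2 I^\epsilon(z^\epsilon,w^\epsilon)$. Likewise $I_{2,\omega}(\Psi_\omega)=\epsilon^{3/(p+1)}\alpha^2 I^{2,\epsilon}$, and by the computation in Theorem \ref{behd}, $I^{2,\epsilon}\sim -2\epsilon^{-4/(p+1)}\|w_0\|_{L^2}^2$. Lemma \ref{lim-2} gives $I^\epsilon(z^\epsilon,w^\epsilon)\to I_0(w_0)>0$. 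Collecting powers, $h\sim C\epsilon^{\frac{3p+8}{p(p+1)}}$ and $g\sim -2\|w_0\|^2\epsilon^{\frac{3p+8}{p(p+1)}-\frac{4}{p+1}}$. The first term of $S''$ then behaves like a positive constant times $\epsilon^{e_1}$, where
\[
e_1=2\Big(\tfrac{3p+8}{p(p+1)}-\tfrac{4}{p+1}\Big)+\tfrac{2-p}{p}\cdot\tfrac{3p+8}{p(p+1)}.
\]

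For the second term I would write $\frac{d}{d\omega}=\frac{d\epsilon}{d\omega}\frac{d}{d\epsilon}$, with $\frac{d\epsilon}{d\omega}\sim -\frac{p+1}{2}\epsilon^{\frac{p-3}{p+1}}$. I would then expand $g$ as $-2\omega\,\epsilon^{\frac{3p+8}{p(p+1)}-\frac{4}{p+1}}\int(z^\epsilon w^\epsilon+\dots)$. The $\epsilon$-derivative splits into two parts. The first is the explicit power, which gives a definite-sign leading term. The second is the derivative of the integral, which involves $\partial_\epsilon z^\epsilon,\partial_\epsilon w^\epsilon$. By Lemma \ref{lem:H2-bound} this part is $O(\epsilon^{\frac{3-p}{p+1}})$ relative to the integral, and I would argue it is of lower order than the explicit-power contribution.

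Comparing the two contributions, both scale like the same power $\epsilon^{e_1-\frac{4}{p+1}+\dots}$. The sign of $S''$ is then determined by an explicit coefficient of the form $\kappa(p)=\frac{c_1}{p}-c_2\big(\frac{3p+8}{p(p+1)}-\frac{4}{p+1}\big)$. Here $\frac{3p+8}{p(p+1)}-\frac{4}{p+1}=\frac{8-p}{p(p+1)}$, so this factor is positive exactly when $p<8$, which is Levandosky's threshold. I expect this computation to show $\kappa(p)>0$ iff $0<p<8$.

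In the case where $F(\cdot,r,\cdot,t)$ is $\beta$-homogeneous, \eqref{Gj-H2} introduces an extra factor $\epsilon^{\beta/(p+1)}$. I would then rechoose $\alpha$ so that the nonlinearity balances, namely $\alpha^p\epsilon^{\beta/(p+1)}=\epsilon^{4/(p+1)}$. Redoing the exponent bookkeeping should replace $8-p$ by $8-(p+2\beta)$.

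The main obstacle is the error term coming from $\frac{d}{d\epsilon}$ of the integral. Lemma \ref{lem:H2-bound} only gives a bound of the same order as the main term unless one exploits more structure. So first I would try to show that the derivative of $\int(z^\epsilon w^\epsilon+\dots)$ is $o(\epsilon^{\frac{3-p}{p+1}})$. To do this I would use the fact that $\frac{d}{d\epsilon}\|w^\epsilon\|^2$ is controlled via $\wp_0$ and the minimality of $w_0$, mirroring Levandosky's argument for the fifth-order KdV equation.
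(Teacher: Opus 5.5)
The formula for $S''$ is derived as in the paper (differentiate \eqref{s_w}, compare with \eqref{S-pr} to get $h'=g$, then the product rule), once the stray $/p$ in your expression for $S'$ is dropped. The gap is in the sign statement, which is the real content of the lemma.

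You reduce the sign to a coefficient $\kappa(p)$ that you never compute, and the form you give it cannot yield the claim. With $c_1>0$ the coefficient of the positive first term, your own expression gives $\kappa(8)=c_1/8>0$, so no ``iff $0<p<8$'' can come out of it. More importantly, your asymptotics already fix the sign of the second term. Since $g\sim-2\|w_0\|_{L^2}^2\,\epsilon^{\frac{8-p}{p(p+1)}}$ and $\frac{d\epsilon}{d\omega}\sim-\frac{p+1}{2}\epsilon^{\frac{p-3}{p+1}}<0$, the explicit power gives $g'\approx\frac{8-p}{p}\|w_0\|_{L^2}^2\,\epsilon^{\frac{8-5p}{p(p+1)}}>0$ for $p<8$. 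So the second term adds to the first, and there is nothing to balance.

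That is precisely the paper's argument. The first term is nonnegative, so it suffices to show $\frac{d}{d\omega}\big(I_{2,\omega}(\Psi_\omega)/\omega\big)>0$. Its leading part is $\frac{8-p}{p}\,\omega$ times a power of $\epsilon$ times $A(\epsilon)$, and $A(\epsilon)\to\|w_0\|_{L^2}^2>0$ by Lemma \ref{conver}. The $\beta$-case is the same bookkeeping with $\alpha=\epsilon^{\frac{4-\beta}{p(p+1)}}$, as you anticipated.

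Your ``main obstacle'' is misdiagnosed; your earlier sentence had it right. Write
$\frac{d}{d\epsilon}\big(\epsilon^{\frac{8-p}{p(p+1)}}A\big)=\epsilon^{\frac{8-p}{p(p+1)}}\big(\frac{8-p}{p(p+1)}\epsilon^{-1}A+A'\big)$. The first term is of order $\epsilon^{-1}$. In $A'$, the $\epsilon$-derivatives of the coefficients $b\epsilon^{\frac{2}{p+1}}$ and $b_2\epsilon^{\frac{4}{p+1}}$ contribute $O(\epsilon^{\frac{1-p}{p+1}})$. The terms containing $\partial_\epsilon z^\epsilon,\partial_\epsilon w^\epsilon$ contribute $O(\epsilon^{\frac{3-p}{p+1}})$ by Lemma \ref{lem:H2-bound}. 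These are smaller than the first term by the factors $\epsilon^{\frac{2}{p+1}}$ and $\epsilon^{\frac{4}{p+1}}$, and this persists after multiplying by the common factor $\frac{d\epsilon}{d\omega}$. So Lemma \ref{lem:H2-bound}, as stated, already disposes of the remainder. The $o(\epsilon^{\frac{3-p}{p+1}})$ improvement via minimality of $w_0$, for which you give no argument, is not needed.

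Also watch which family you differentiate. The identity $h'=g$ is the envelope identity $\frac{d}{d\omega}\mathcal I_\omega=I_2(\Psi_\omega)$ for the $K=1$ normalized minimizer behind \eqref{s_w}--\eqref{S-pr}. The asymptotics you inserted are those of the $P_\omega=0$ ground state $(z^\epsilon,w^\epsilon)$. A genuine competition between the two terms does occur for the normalized family: there $g\to-\infty$ and $g'<0$ near $1^-$. But analyzing it requires that family's asymptotics, not those of $(z^\epsilon,w^\epsilon)$.

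For the ground state, Lemma \ref{estd}(iii) gives $I_{2,\omega}/\omega=I_2=S'$. The robust route is therefore to differentiate $S'(\omega)=-2\epsilon^{\frac{8-p}{p(p+1)}}A(\epsilon)$ directly, which is in substance the computation in the paper's proof.
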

\begin{proof} Differentiating with respect to $\omega$ the formula for $S$ given by  (\ref{s_w}) and comparing with the formula for $S'$ given by (\ref{S-pr}), we get
\[
\frac{d}{d\omega} I_{\omega}(\Psi_{\omega})= \frac{I_{2, \omega}(\Psi_{\omega})}{\omega}
\]
Then, differentiating with respect to $\omega$ the formula for $S'$ given (\ref{S-pr}) and using the previous fact, we get the desired formula for $S''$. 

Now, we point out that the first term of $S''$ is nonnegative, so we will determine the sign of
\[
\frac d{d\omega}
\left(
\frac{I_{2,\omega}(\Psi_\omega)}{\omega}
\right).
\]
Using the KdV scaling induced by (\ref{kdv-s}) and definition (\ref{Ic=eIe}), we obtain
\[
I_2^\epsilon(z,w)
=-2\omega
\int_{\mathbb R}
\Big(
\epsilon^{-\frac4{p+1}}zw
+
b\epsilon^{-\frac2{p+1}}z_yw_y
+
b_2 z_{yy}w_{yy}
\Big)\,dy.
\]
Therefore,
\[
\frac{I_{2,\omega}(\Psi_\omega)}{\omega}=\frac{\epsilon^{\frac{3p+8}{p(p+1)}}}{\omega}
I_2^\epsilon(z, w)
=
-2
\epsilon^{\frac{8-p}{p(p+1)}}A(\epsilon),
\]
where $A(\epsilon)$ is given by
\[
A(\epsilon)
=
\int_{\mathbb R}
\Big(
zw
+
b\epsilon^{\frac2{p+1}}
z_yw_y
+
b_2
\epsilon^{\frac4{p+1}}
z_{yy}w_{yy}
\Big)\,dy.
\]
From the previous notation, we have 
\begin{equation}\label{eq:key2}
\begin{split}
\frac d{d\omega}
\left(
\frac{I_{2,\omega}(\Psi_\omega)}{\omega}
\right)&=\frac d{d\epsilon}
\left(
\frac{\epsilon^{\frac{3p+8}{p(p+1)}}}{\omega(\ep)}
I_2^\epsilon(z, w)
\right)\frac{d \ep}{d\omega}\\
&=
(p+1)\omega(\ep)\epsilon^{\frac{p-3}{p+1}}\left(\frac{(8-p)}{p(p+1)}
\epsilon^{-\frac{(p+4)(p-2)}{p(p+1)}}
A(\epsilon)
+\epsilon^{\frac{8-p}{p(p+1)}}
A'(\epsilon)\right)\\
&=
\frac{(8-p)}{p}\omega(\ep)\epsilon^{\frac{8-p}{p(p+1)}}
A(\epsilon)
+(p+1)\omega(\ep)\epsilon^{\frac{8-p}{p(p+1)}}
\epsilon^{\frac{p-3}{p+1}}A'(\epsilon),
\end{split}
\end{equation}
where we are using $\omega^{2}=1-\epsilon^{\frac{4}{p+1}}$. 

On the other hand, differentiating the functional \(A(\epsilon)\) yields terms involving
\(\partial_\epsilon z\), \(\partial_\epsilon w\), as well as derivatives of the coefficients
\(\epsilon^{\frac{2}{p+1}}\) and \(\epsilon^{\frac{4}{p+1}}\). Hence $A'(\epsilon) =
R(z,w,\partial_\epsilon z,\partial_\epsilon w)$, where \(R\) is linear with respect to either \(\partial_\epsilon z\) or \(\partial_\epsilon w\), and whose coefficients remain uniformly bounded for sufficiently small
\(\epsilon>0\).  Now, from Lemma \ref{lem:H2-bound}, we have that $\epsilon^{\frac{p-3}{p+1}}
(\partial_\epsilon z,\partial_\epsilon w)$ is uniformly bounded in $\mathcal H_{2, \ep},$
for \(0<p<8\). In particular, this bounded weighted energy norm yields uniform control over the lower-order regularized components, implying that the residual term satisfies
\[
R(z,w,\partial_\epsilon z,\partial_\epsilon w)
=
O\!\left(
\epsilon^{-\frac{p-3}{p+1}}
\right),
\]
and therefore
\[
\epsilon^{\frac{p-3}{p+1}}
\epsilon^{\frac{8-p}{p(p+1)}}
R(z,w,\partial_\epsilon z,\partial_\epsilon w)
=
O\!\left(
\epsilon^{\frac{8-p}{p(p+1)}}
\right),
\]
which converges to zero as \(\epsilon\to0^+\) whenever \(0<p<8\). It follows from \eqref{eq:key2} that
\begin{equation}\label{eq:key3}
\frac d{d\omega}
\left(
\frac{I_{2,\omega}(\Psi_\omega)}{\omega}
\right)
=
\frac{(8-p)}{p}\omega(\ep)\epsilon^{\frac{8-p}{p(p+1)}}
A(\epsilon)
+o
\left(1\right).
\end{equation}
Thanks to the Lemma \ref{conver},
\[
(z^\epsilon,w^\epsilon)
\to
(w_0,w_0)
\quad
\text{strongly in }
H^2(\mathbb R)\times H^2(\mathbb R).
\]
Consequently,
\[
A(\epsilon)
=
\int_{\mathbb R}
w_0^2\,dy
+
o(1).
\]
Substituting into \eqref{eq:key3} yields
\begin{equation}\label{eq:key4}
\frac d{d\omega}
\left(
\frac{I_{2,\omega}(\Psi_\omega)}{\omega}
\right)
=
\frac{(8-p)}{p}\epsilon^{\frac{8-p}{p(p+1)}}
\left(
\int_{\mathbb R}w_0^2\,dy
\right)
+o
\left(1
\right),
\end{equation}
which implies for $0<p<8$ that the left side of 
\eqref{eq:key4} is strictly positive.
Hence
\[
\frac d{d\omega}
\left(
\frac{I_{2,\omega}}{\omega}
\right)
>0,
\]
for $\epsilon$ sufficiently small. Returning to the formula for $S''$, we conclude that both terms
on the right-hand side are positive whenever $\omega$
is sufficiently close to $1^{-}$.
Therefore, there is $0<\omega^*<1$ such that
\[
S''(\omega)>0,
\]
for $\omega\in(\omega^*, 1)$ and $0<p<8$, as desired.


In the case where $F(\cdot, r, \cdot, t)$ is a homogeneous function of degree $\beta$, we need to modify the definition of $\alpha= \ep^{\frac{4}{p(p+1)}}$ for $\alpha= \ep^{\frac{4-\beta}{p(p+1)}}$ and the functionals involved in formula (\ref{Ic=eIe}) by 
\[
L(\Psi_{\omega})= \ep^{\frac{3p+8-2\beta}{p(p+1)}}L^{\ep}(\Psi^{\ep}),
\]
where $L$ stands for $J_{\omega}$, $I_{\omega}$, $I_{1}$, $I_{2, \omega}$  and $K$, and $L^\ep$ stands for $J^\ep$, $I^\ep$, $I^{1,\ep}$, $I^{2, \ep}$ and $K^\ep$.  As above, using the modified KdV scaling induced by (\ref{kdv-s}) and definition (\ref{Ic=eIe}), we obtain in this case that
\[
\frac{\epsilon^{\frac{3p+8-2\beta}{p(p+1)}}}{\omega(\ep)}
I_2^\epsilon(z, w)
=
-2
\epsilon^{\frac{8-(p+2\beta)}{p(p+1)}}A(\epsilon), 
\]
and also that 
\begin{equation*}
\frac d{d\omega}
\left(
\frac{I_{2,\omega}(\Psi_\omega)}{\omega}
\right)
=
\frac{(8-(p+2\beta))}{p}\omega(\ep)\epsilon^{\frac{8-(p+2\beta)}{p(p+1)}}
A(\epsilon)
+o
\left(1
\right).
\end{equation*}
From the same argument in the previous case, there is $0<\omega^*<1$ such that
\[
S''(\omega)>0,
\]
for $\omega\in(\omega^*, 1)$ and $p+2\beta<8$,  as desired.
\end{proof}

\color{black}

From the previous results, we are able to complete the proof of the stability result in Theorem \ref{main}. 

\begin{proof}[{\bf Proof of Theorem \ref{main}}] 
\nd a) First, we know that  $S$ is a decreasing function and that
\[
\lim_{\omega\rightarrow1^-}S({\omega})=\lim_{\omega\rightarrow1^-}S'({\omega})=0.
\]
On the other hand, for  (\ref{est1}) we have $S(\omega)= O\left((1-\omega^2)^{\frac{3p+8}{4p}}\right)$ for $|\omega|$ near $1^{-}$, since 
\[ 
\frac{\mathcal J_0 }{2}(1-\omega^2)^{\frac{3p+8}{4p}}\leq S(\omega)=(1-\omega^2)^{\frac{3p+8}{4p}} S^{\ep} \leq \frac{3\mathcal J_0 }{2}(1-\omega^2)^{\frac{3p+8}{4p}}. 
\] 
Now, from Lemma \ref{estd} we know that 
\begin{align*}
S'(\omega)&= I_2(u^{\omega},v^{\omega})\\
&=-
2 \omega (1-\omega^2)^{\frac{8-p}{p(p+1)}}\int_{\mathbb{R}}(z^{\ep} w^{\ep}+b\epsilon^{\frac{2}{p+1}}(z^{\ep})' (w^{\ep})'+b_2 \epsilon^{\frac{4}{p+1}}(z^{\ep})'' (w^{\ep})'')\,dy\\
&= O\left( (1-\omega^2)^{\frac{8-p}{p(p+1)}}\right),
\end{align*}
for $\omega$ near $1^{-}$, where we use the convergence property of the sequence $(z^{\ep}, w^{\ep})_\ep$ obtained in Lemma \ref{lim-1}. From the fact that   $S'$ is a $C^1$ negative function for $\omega$ near $1^{-}$, there must exist intervals arbitrarily close to $1^{-}$ in which $S''$  is positive, which proves a). 

\medskip
\nd b) Assume that $F$ is $(p+2)$-homogeneous with $0<p<8$. 

Let $(\psi_\omega,v_\omega)\in \mathcal G_\omega$ be a ground state solution associated with
\[
S(\omega)
=
\inf\left\{
J_\omega(\psi,v):
(\psi,v)\in X\setminus\{0\},
\quad
P_\omega(\psi,v)=0
\right\}.
\]
From Theorem \ref{behd} and the subsequent asymptotic analysis, we know that
\[
\lim_{\omega\to1^-}S(\omega)=0,
\qquad
\lim_{\omega\to1^-}S'(\omega)=0.
\]
Moreover, from Lemma \ref{lem48}, there exists $\omega^\ast\in(0,1)$ such that for $\omega^*< \omega<1$ and $0<p<8$ we have
\[
S''(\omega)>0. 
\]
On the other hand, from Theorem \ref{t-45}, the ground state set $\mathcal G_{\omega}$ is stable for $\omega^*< \omega<1$ and $0<p<8$, giving b). 

\medskip
\nd c) Assume that $F$ is $(p+2)$-homogeneous and $F(\cdot, r, \cdot, t)$ is a homogeneous function of degree $\beta$ such that $0<p+2\beta<8$. From the discussion in literal b) and Theorem \ref{t-45}, the ground state set $\mathcal G_{\omega}$ is stable for $\omega^*< \omega<1$, proving c), and the theorem is shown. 
\end{proof}



\section{Numerical experiments}\label{sec5}

In this section, we numerically investigate the stability of the family of solitary wave solutions discussed previously, focusing on the wave-velocity regime $0 < \omega < 1$ and modeling parameters satisfying the condition $a + c + 2b = 0$.

To compute solitary wave solutions of the system \eqref{1bbl} for arbitrary wave velocities, we adapt the numerical scheme introduced in \cite{Esfahani-Levandosky-2021}, originally developed for the fifth-order scalar KdV equation. This method is modified to approximate solutions of the system of traveling-wave equations \eqref{trav-eqs}, employing a Fourier spectral approach 
augmented with stabilizing factors for both the wave elevation $\eta$ and the fluid velocity $u$, thereby ensuring convergence of the iterative scheme used in our simulations.

The function $S'(\omega)$ is computed using the analytical expression $S'(\omega) = I_2(\Psi_{\omega})$, obtained in Lemma \ref{estd}, while $S''(\omega)$ is approximated using a second-order central difference scheme applied to $S'(\omega)$. As established in the analytical results presented in the previous section, a traveling-wave solution corresponding to the wave velocity $\omega$ is stable when $S''(\omega)>0$, and unstable when $S''(\omega) < 0$.

To derive the numerical scheme, we apply the Fourier transform to equations \eqref{trav-eqs}, yielding
\begin{align*}
\begin{cases}
-\omega ( \hat{v}_k + d k^2 \hat{v}_k + d_2 k^4 \hat{v}_k ) + \hat{\psi}_k - c k^2 \hat{\psi}_k + c_2 k^4 \hat{\psi}_k &=\widehat{G_1}, \\
-\omega( \hat{\psi}_k + b k^2 \hat{\psi}_k + b_2 k^4 \hat{\psi}_k ) + \hat{v}_k - a k^2 \hat{v}_k + a_2 k^4 \hat{v}_k & = \widehat{G_2}.
\end{cases}
\end{align*}
We recall that $b=d>0$, $b_2 = d_2 >0$, $a, c <0$, $a_2, c_2 >0$, and $\widehat{G_1}, \widehat{G_2}$ denote the Fourier transforms of the nonlinear functions $G_1(\psi, \psi', \psi'', v, v', v'')$ and $G_2(\psi, \psi', \psi'', v, v', v'')$, respectively.

Rewriting these equations, we get
\begin{align*}
\begin{cases}
-\omega (1 + d k^2 + d_2 k^4) \hat{v}_k + (1-ck^2 + c_2 k^4) \hat{\psi}_k &= \widehat{G_1}, \\
(1-ak^2 + a_2 k^4)\hat{v}_k - \omega ( 1 + b k^2 + b_2 k^4) \hat{\psi}_k &= \widehat{G_2},
\end{cases}
\end{align*}
and thus, we can write
\begin{align}
&\hat{v}_k = \frac{ \widehat{G_1} D_{22} - \widehat{G_2}  D_{12} }{D_{11} D_{22} - D_{21} D_{12} }, \label{v_eq}
\end{align}
and
\begin{align}
&\hat{\psi}_k = \frac{ \widehat{G_2} D_{11} - \widehat{G_1} D_{21} }{ D_{11} D_{22} - D_{21} D_{12} }, \label{psi_eq}
\end{align}
where
\begin{equation*}
D = \begin{pmatrix}
D_{11} & D_{12} \\
D_{21} & D_{22} 
\end{pmatrix} =
\begin{pmatrix}
-\omega ( 1 + d k^2 + d_2 k^4 ) & 1 - c k^2 + c_2 k^4 \\
1 - a k^2 + a_2 k^4 & -\omega (1 + b k^2 + b_2 k^4 )
\end{pmatrix}.
\end{equation*}

On the other hand, by multiplying equation \eqref{v_eq} by $\hat{v}_k$ and equation \eqref{psi_eq} by $\hat{\psi}_k$, and then adding over $k$, we obtain
\begin{align*}
&P_1(v ) := \frac{ \sum_k  (D_{11} D_{22} - D_{21} D_{12} ) \hat{v}_k^2  }{ \sum_k (\widehat{G}_1 D_{22} - \widehat{G_2} D_{12}  ) \hat{v}_k } = 1,
\end{align*}
and
\begin{align*}
&P_2(\psi) := \frac{  \sum_k ( D_{11} D_{22} - D_{21} D_{12}  ) \hat{\psi}_k^2  }{  \sum_k ( \widehat{G_2} D_{11} - \widehat{G_1} D_{21} ) \hat{\psi}_k } = 1.
\end{align*}
To compute approximate solutions to the traveling wave equations, we select initial values $\psi^0$ and $v^0$. Then, for $s \geq 0$, we define the sequence
\begin{align}
&\hat{v}_{k}^{s+1} = \frac{\widehat{G_1}(\tilde{\psi^s}, \tilde{\psi^s}', \tilde{\psi^s}'', \tilde{v^s}, \tilde{v^s}', \tilde{v^s}'') D_{22} - \widehat{G_2}(\tilde{\psi^s}, \tilde{\psi^s}', \tilde{\psi^s}'', \tilde{v^s}, \tilde{v^s}', \tilde{v^s}'')  D_{12}  }{ D_{11} D_{22} - D_{21} D_{12} }, \label{iteration_1} 
\end{align}
and
\begin{align}
&\hat{\psi}_k^{s+1} =  \frac{\widehat{G_2}(\tilde{\psi^s}, \tilde{\psi^s}', \tilde{\psi^s}'', \tilde{v^s}, \tilde{v^s}', \tilde{v^s}'') D_{11} - \widehat{G_1}(\tilde{\psi^s}, \tilde{\psi^s}', \tilde{\psi^s}'', \tilde{v^s}, \tilde{v^s}', \tilde{v^s}'')  D_{21}  }{ D_{11} D_{22} - D_{21} D_{12} }, \label{iteration_2}
\end{align}
where $\tilde{v}^s : = \alpha_s v^s$, $\tilde{\psi}^s := \beta_s \psi^s$, and $\alpha_s$, $\beta_s \in \mathbb{R}$ are solutions of the following equations
\begin{equation}\label{Pequations}
\begin{cases}
P_1(\alpha_s v^s ) = 1,\\
P_2(\beta_s \psi^s ) = 1.
\end{cases}
\end{equation}
We note that the quantities $\alpha_s$ and $\beta_s$ are stabilizing factors introduced to ensure the convergence of the iteration defined in \eqref{iteration_1} and \eqref{iteration_2}. 
In the numerical simulations, we initiate the iteration \eqref{iteration_1}-\eqref{iteration_2} with 
\[
v^0(x) = \psi^0(x) = e^{-0.05 (x-50)^2},
\]
and we take $N=2^{12}$ points along the computational domain is $[0,L]=[0,100]$.

\subsection{Homogeneous case}

Suppose that the functions $G_1, G_2$ are both homogeneous of order $p+1$. Then, given that the parameters $\alpha_s$, $\beta_s$ are chosen at each iteration $s$ such that $P_1(\alpha_s v^s) = 1$ and 
$P_2(\beta_s \psi^s) = 1$, we obtain that
\begin{equation}\label{solit_curve}
\begin{cases}
&\frac{ \sum_k  (D_{11} D_{22} - D_{21} D_{12} ) \alpha_s^2 \hat{v}_k^2  }{ \sum_k \alpha_s^{p+1} (\widehat{G}_1 D_{22} - \widehat{G_2} D_{12}  ) \alpha_s \hat{v}_k } = 1,\\
\\
&\frac{  \sum_k ( D_{11} D_{22} - D_{21} D_{12}  ) \beta_s^2 \hat{\psi}_k^2  }{  \sum_k \beta_s^{p+1} ( \widehat{G_2} D_{11} - \widehat{G_1} D_{21} ) \beta_s \hat{\psi}_k } = 1.
\end{cases}
\end{equation}
From equations \eqref{Pequations} and \eqref{solit_curve}, we obtain the explicit expressions for the parameters $\alpha_s, \beta_s$ given by 
\begin{equation}\label{stabilizing1}
\begin{cases}
\alpha_s &= M_s^{1/p},\\
\beta_s &= N_s^{1/p},
\end{cases}
\end{equation}
where
\begin{align*}
&M_s := \frac{ \sum_k  (D_{11} D_{22} - D_{21} D_{12} ) \hat{v}_k^2  }{ \sum_k  (\widehat{G}_1 D_{22} - \widehat{G_2} D_{12}  ) \hat{v}_k },
\end{align*}
and
\begin{align*}
&N_s := \frac{  \sum_k ( D_{11} D_{22} - D_{21} D_{12}  ) \hat{\psi}_k^2  }{  \sum_k ( \widehat{G_2} D_{11} - \widehat{G_1} D_{21} ) \hat{\psi}_k }.
\end{align*}

As a consequence, the iteration defined by equations \eqref{iteration_1} and \eqref{iteration_2}, can be rewritten as follows 
\begin{align*}
&\hat{v}_{k}^{s+1} = M_s^{ \frac{p+1}{p} } \frac{\widehat{G_1} (U) D_{22} - \widehat{G_2}(U)  D_{12}  }{ D_{11} D_{22} - D_{21} D_{12} }, 
\end{align*}
and
\begin{align*}
&\hat{\psi}_k^{s+1} = N_s^{\frac{p+1}{p} } \frac{\widehat{G_2}(U) D_{11} - \widehat{G_1}(U)  D_{21}  }{ D_{11} D_{22} - D_{21} D_{12} }, 
\end{align*}
where $U = ( {\psi^s}, {\psi^{s}}', {\psi^{s}}'', {v^s}, {v^{s}}', {v^{s}}'')$.

We include some plots of the function $S''(\omega)$ whose sign decides the stability/instability of a traveling-wave solution of system \eqref{1bbl}. 

First, we consider the nonlinear terms $G_1 = \psi^5$ and $G_2 = v^5$. In Figure \ref{stability1_homogeneous1}, we present the results obtained for the model parameters $a= c = -2$, $b=d = 2$, $b_2=d_2 = 3$ and $a_2 = c_2 = 3$.
It is observed that the coefficient $S''(\omega)$ remains positive throughout the entire interval $[0,1]$, indicating that the corresponding traveling-wave solutions are stable for all wave velocities within this range. This extends beyond the theoretical results presented in the previous section, where stability was only established in a neighborhood of the wave velocity $\omega = 1$.

\begin{figure}[h!]
  \centering
		\includegraphics[width=9cm, height=6cm]{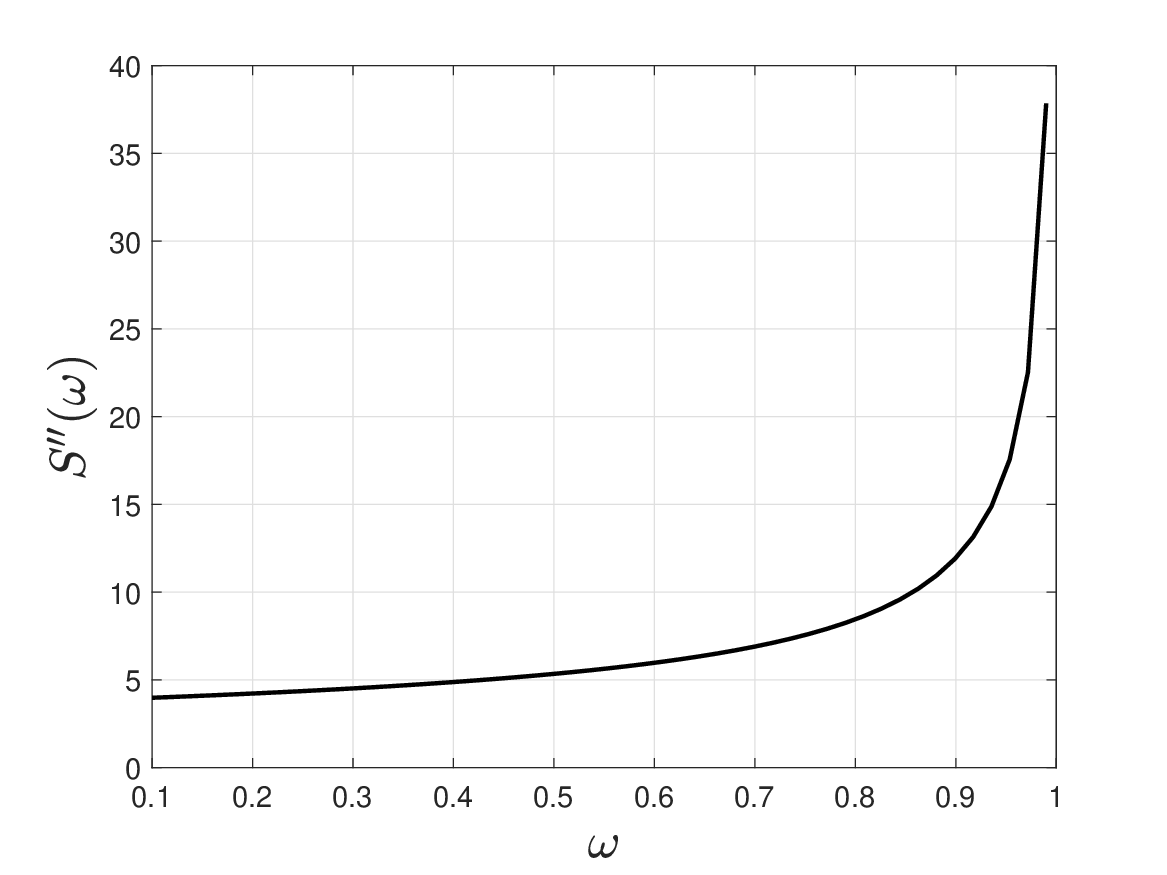}
	\caption{$S''(\omega)$ for $p=4$, $G_1 = \psi^{p+1}, G_2 = v^{p+1}$, $a= c = -2$, $b=d = 2$, $b_2=d_2 = 3.2$ and $a_2 = c_2 = 3$.}
	\label{stability1_homogeneous1}
\end{figure}

In Figure \ref{stability1_homogeneous2}, we present an additional numerical experiment with parameters $a = c = -2$, $b = d = 2$, $b_2 = d_2 = 3$, and $a_2 = c_2 = 3.2$. The nonlinear exponent is set to $p = 9$, with $G_1 = \psi^{p+1} = \psi^{10}$ and $G_2 = v^{p+1} = v^{10}$. For this value, $p > 8$, we observe that the function $S''(\omega)$ changes sign. In particular, there exists a range of wave velocities near $\omega = 1$ where $S''(\omega)$ becomes negative, which implies that the corresponding orbit $\mathcal{G}_\omega$ is unstable in this range. This experiment suggests that the stability of the orbit $\mathcal{G}$ is lost when the nonlinearity parameter $p$ exceeds the threshold value $p = 8$.

To illustrate a case with $p = 3$, $\beta=2$, which satisfies the condition $p + 2\beta < 8$, Figure \ref{stability1_homogeneous3} presents a numerical experiment for the nonlinearity
\[
F(\psi, \psi_x, v, v_x) = \psi^3 \psi_x^2 + v^3 v_x^2,
\]
with model parameters $a = c = -2$, $b = d = 2$, $b_2 = d_2 = 3$, and $a_2 = c_2 = 3.2$. In this case,
\begin{equation}
\begin{aligned}
&G_1 = -3 \psi^2 \psi_x^2 - 2 \psi_{xx} \psi^3, \\
&G_2 = -3 v^2 v_x^2 - 2 v_{xx} v^3.
\end{aligned}
\end{equation}
The numerical result shows once again that $S''(\omega)>0$ throughout the velocity range $0<\omega<\min\left\{1, \frac{-a}{b}, \frac{-c}{b},\frac{a_2}{b_2}, \frac{c_2}{b_2}\right\}$,
providing further evidence of the stability predicted by the theory.

\begin{figure}[h!]
  \centering
		\includegraphics[width=9cm, height=6cm]{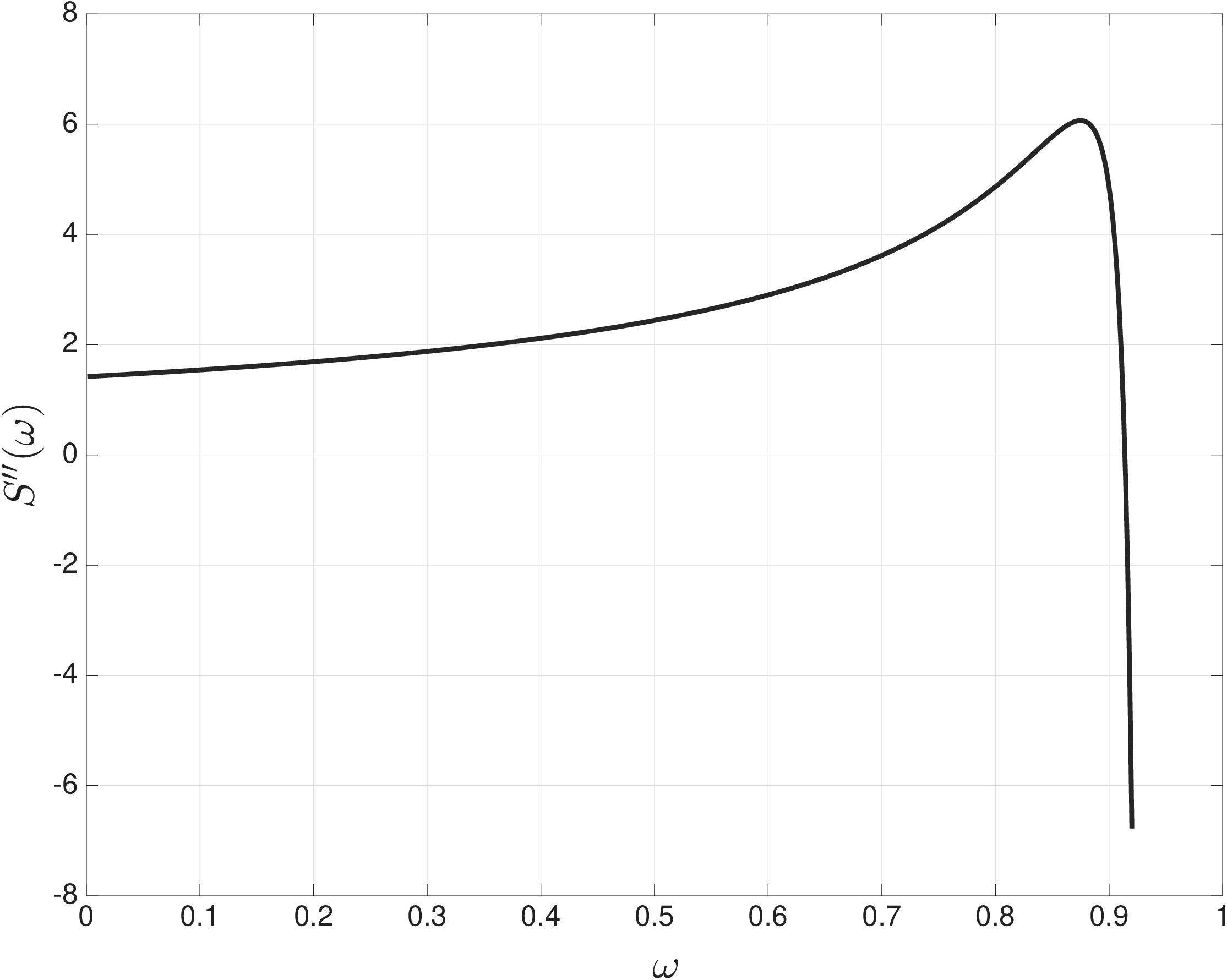}
	\caption{$S''(\omega)$ for $p=9$, $G_1 = \psi^{p+1}, G_2 = v^{p+1}$, $a= c = -2$, $b=d = 2$, $b_2=d_2 = 3.2$ and $a_2 = c_2 = 3$.}
	\label{stability1_homogeneous2}
\end{figure}

\begin{figure}[h!]
  \centering
		\includegraphics[width=9cm, height=6cm]{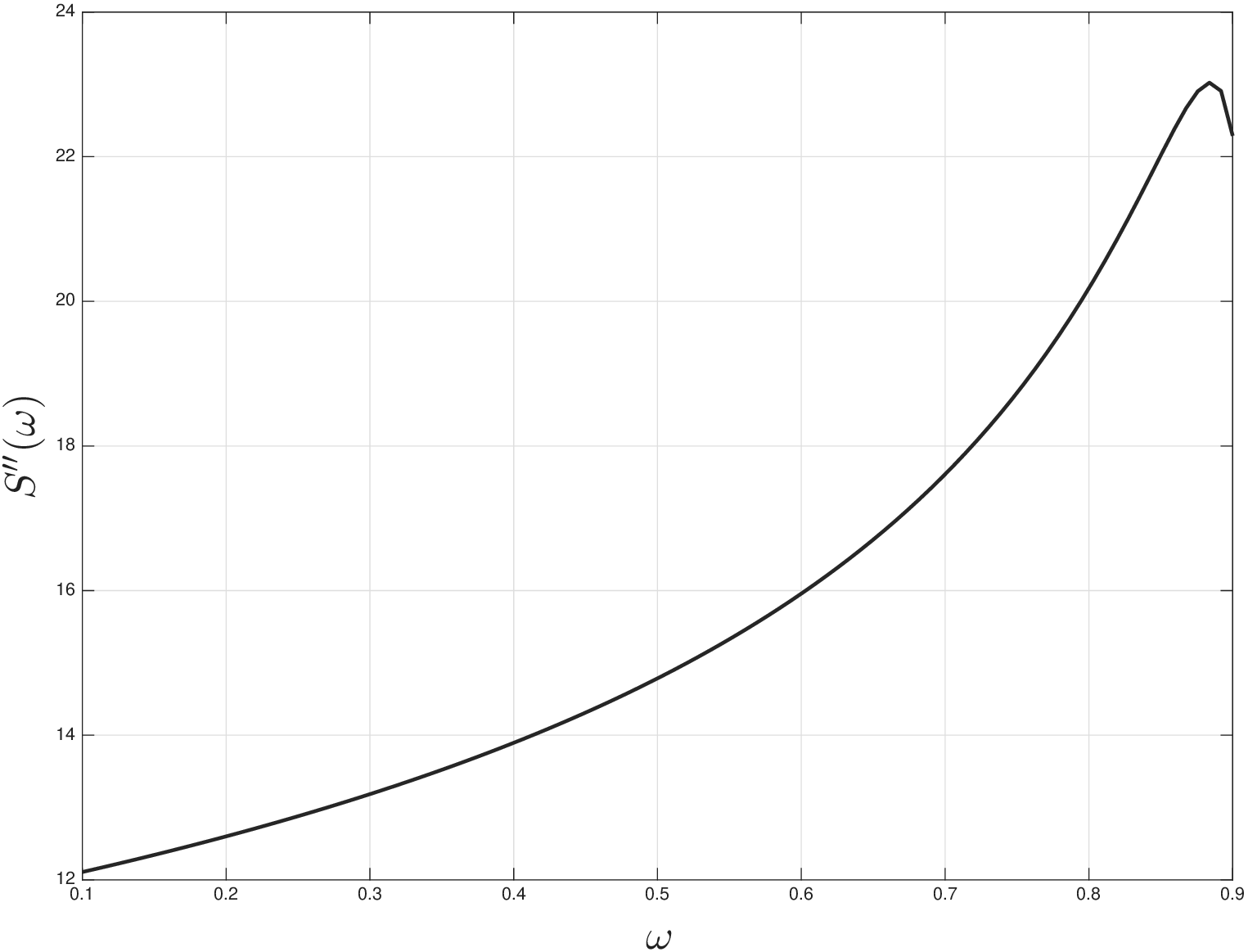}
	\caption{$S''(\omega)$ for $G_1 = -3 \psi^2 \psi_x^2 - 2 \psi_{xx} \psi^3$, $G_2 = -3 v^2 v_x^2 - 2 v_{xx} v^3$. $a= c = -2$, $b=d = 2$, $b_2=d_2 = 3.2$ and $a_2 = c_2 = 3$.}
	\label{stability1_homogeneous3}
\end{figure}

\subsection{Non-homogeneous case}

When the nonhomogeneous terms $G_1, G_2$ are not of the same order of homogeneity, the expressions in \eqref{stabilizing1}
can no longer be used.
Instead, given the values of $\hat{v}_k^s$, $\hat{\psi}_k^s$ at the iteration step $s$, we compute the stabilizing factors using, for instance, a Newton-type method. This involves solving the following nonlinear equations for $\alpha_s, \beta_s$, at each step:
\[
P_1(\alpha_s v^s) = 1, \quad \text{and}\quad P_2(\beta_s \psi^s ) = 1.
\]
Once the stabilizing parameters $\alpha_s, \beta_s$ are computed, we use the update formulas \eqref{iteration_1} and \eqref{iteration_2} to obtain the next iterates $\hat{v}_k^{s+1}$ and $\hat{\psi}_k^{s+1}$ for iteration $s+1$.

In Figure \ref{stability1_nonhomogeneous1}, we present the resulting coefficient $S''(\omega)$ corresponding to the nonlinear terms $G_1 = \psi^3$, $G_2 = v^5$, using the model parameters $a= c = -2$, $b=d=2$, $b_2 = d_2 = 3$, $a_2 = c_2 = 3$. A sign change in the function $S''(\omega)$ is observed within the interval $[0,1]$, indicating the presence of an instability in the traveling wave solutions of system \eqref{1bbl} within an interval approximately $[0, 0.55]$. In contrast, the solutions exhibit stability near the wave velocity $\omega = 1$, in agreement with the theoretical results established in the previous sections.
Further numerical evidence indicates that this region of instability becomes broader as the exponents $p$ and $q$ increase in the case $G_1 = \psi^p, G_2 = v^q$.

\begin{figure}[h!]
  \centering
		\includegraphics[width=9cm, height=6cm]{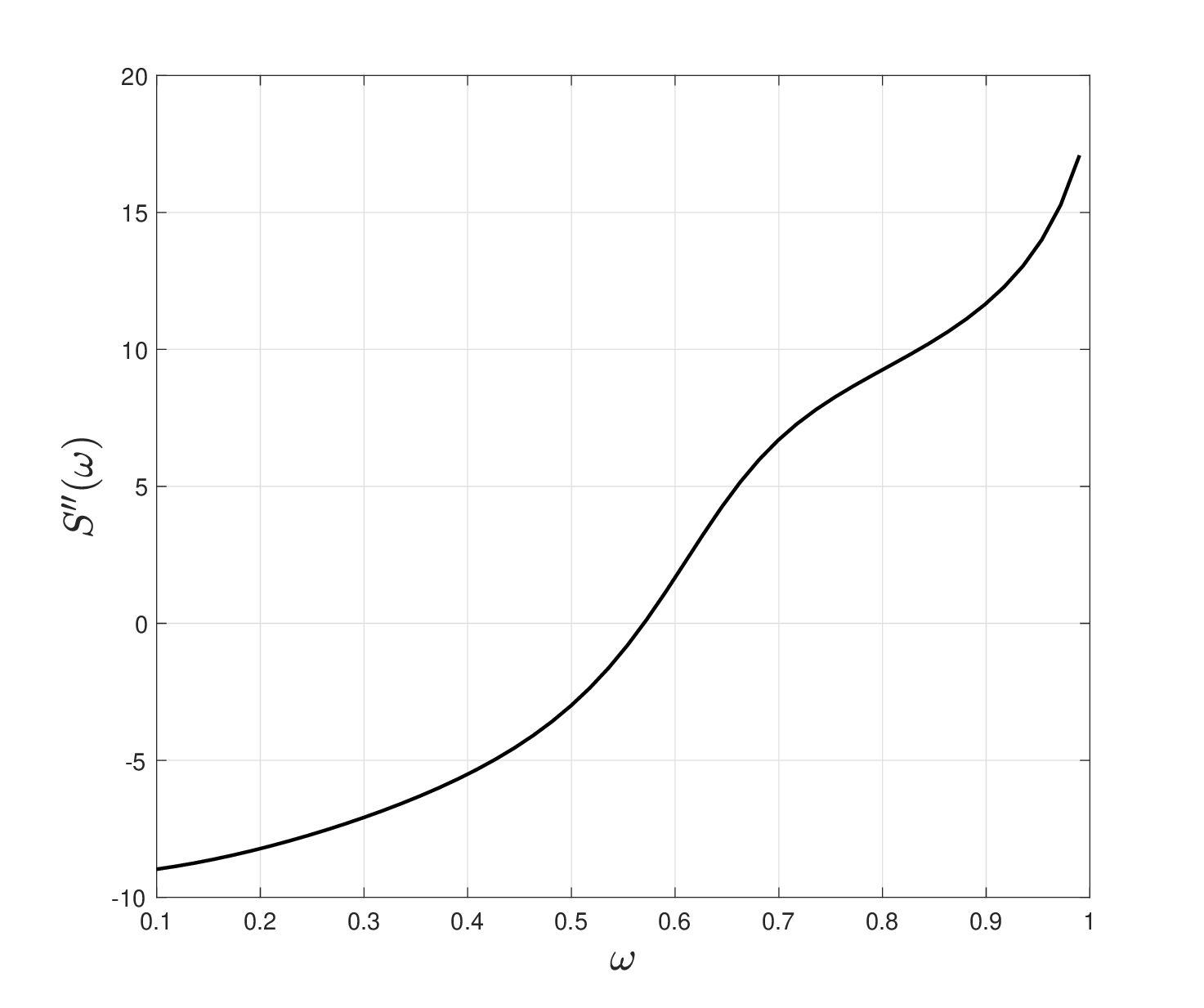}
	\caption{$S''(\omega)$ for $G_1 = \psi^3, G_2 = v^5$, $a= c = -2$, $b=d = 2$, $b_2=d_2 = 3$ and $a_2 = c_2 = 3$.}
	\label{stability1_nonhomogeneous1}
\end{figure}

Finally, in Figure \ref{stability1_nonhomogeneous2}, we consider the case where
\[
F(\psi, \psi_x, v, v_x) = \frac16 \psi^6 + (\psi_x)^2 \psi^2 + \frac16 v^6 + (v_x)^2 v^2,
\]
which leads to 
\begin{equation}
\begin{aligned}\label{nonlinear1}
&G_1 = \psi^5 - 4(\psi_x)^2 \psi - 2\psi^2 \psi_{xx},\\
&G_2 = v^5 - 4(v_x)^2 v - 2v^2 v_{xx}.
\end{aligned}
\end{equation}
We observe that the coefficient $S''(\omega)$ remains positive throughout the entire velocity range $0< \omega < 1$, indicating that the corresponding solitary wave solutions are stable throughout this interval. This numerical finding is also consistent with the theoretical results developed in the previous sections.

\begin{figure}[h!]
  \centering
		\includegraphics[width=9cm, height=6cm]{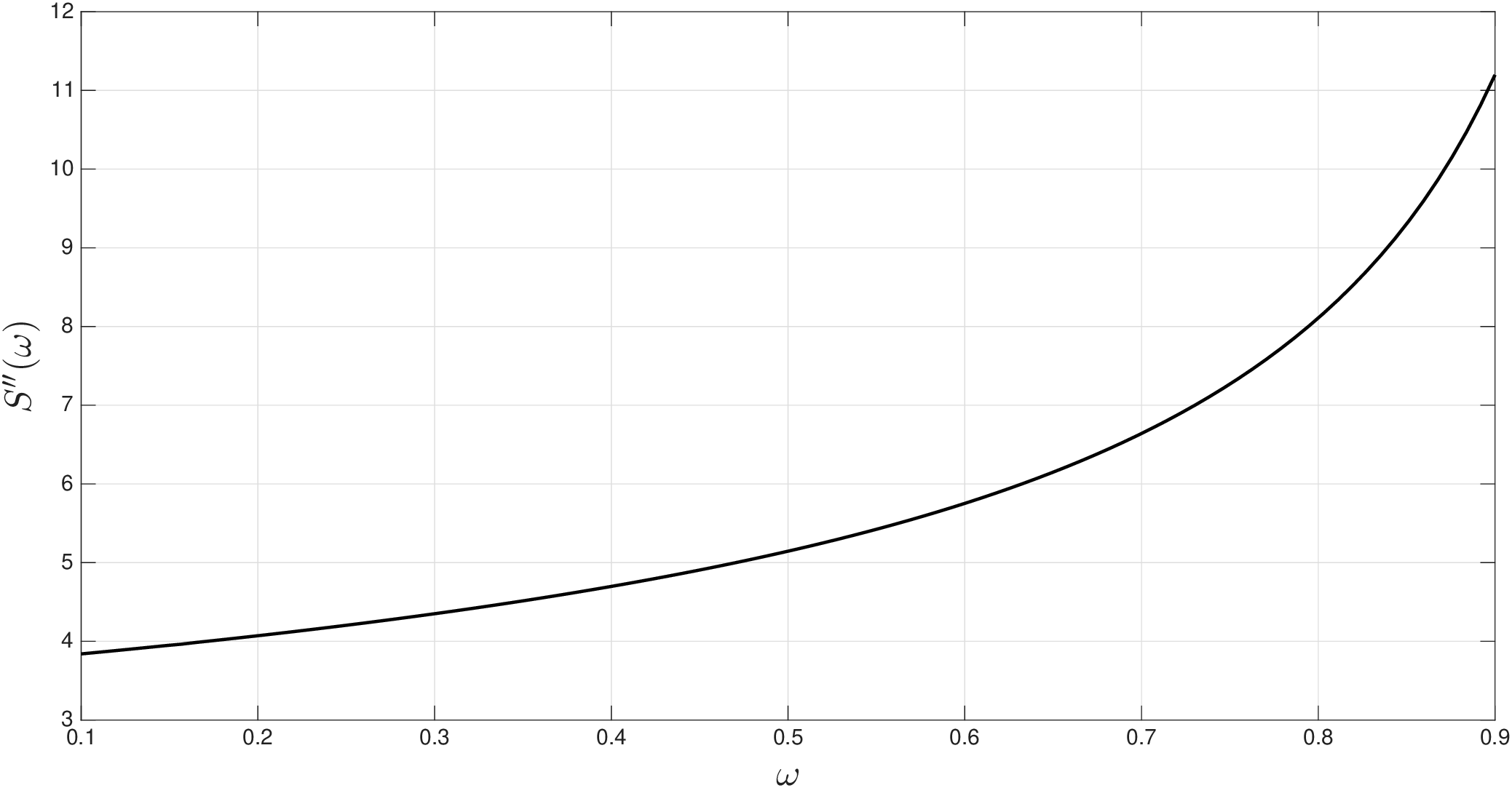}
	\caption{$S''(\omega)$ for $G_1, G_2$ given in \eqref{nonlinear1}, $a= c = -2$, $b=d = 2$, $b_2=d_2 = 2.5$ and $a_2 = c_2 = 3$.}
	\label{stability1_nonhomogeneous2}
\end{figure}

\subsection*{Acknowledgments} R. de A. Capistrano–Filho was partially supported by CAPES/CO\-FE\-CUB grant number 88887.879175/2023-00, CNPq grant numbers 301744/2025-4, 421573/2023-6, and 307808/2021-1, and PROPG (UFPE) \textit{via} PROAP resources. J. C. Mu\~noz was partially supported by Universidad del Valle (Cali, Colombia) through the research project C.I. 71409. J. R. Quintero was supported by the Mathematics Department at  Universidad del Valle  (Cali, Colombia)  under the research project C.I. 71413.

\subsection*{Data availability}  No data were used for the research described in the article.

\end{document}